\numberwithin{equation}{section}
\newtheorem{theoremcounter}{theoremcounter}[section]
\newtheorem{thmstarcounter}{thmstarcounter}
\newtheorem{corollary}[theoremcounter]{Corollary}
\newtheorem{lemma}[theoremcounter]{Lemma}
\newtheorem{proposition}[theoremcounter]{Proposition}
\newtheorem{theorem}[theoremcounter]{Theorem}
\newtheorem{thmstar}[thmstarcounter]{Theorem}
\theoremstyle{definition}
\newtheorem{definition}[theoremcounter]{Definition}
\newtheorem{remark}[theoremcounter]{Remark}
\newcommand{\cA}{\ensuremath{\mathcal{A}}}
\newcommand{\cC}{\ensuremath{\mathcal{C}}}
\newcommand{\cD}{\ensuremath{\mathcal{D}}}
\newcommand{\cF}{\ensuremath{\mathcal{F}}}
\newcommand{\cG}{\ensuremath{\mathcal{G}}}
\newcommand{\cH}{\ensuremath{\mathcal{H}}}
\newcommand{\cI}{\ensuremath{\mathcal{I}}}
\newcommand{\cK}{\ensuremath{\mathcal{K}}}
\newcommand{\cL}{\ensuremath{\mathcal{L}}}
\newcommand{\cP}{\ensuremath{\mathcal{P}}}
\newcommand{\cU}{\ensuremath{\mathcal{U}}}
\newcommand{\cX}{\ensuremath{\mathcal{X}}}
\newcommand{\cY}{\ensuremath{\mathcal{Y}}}
\newcommand{\cZ}{\ensuremath{\mathcal{Z}}}
\newcommand{\rB}{\ensuremath{\mathrm{B}}}
\newcommand{\rE}{\ensuremath{\mathrm{E}}}
\newcommand{\rL}{\ensuremath{\mathrm{L}}}
\newcommand{\rM}{\ensuremath{\mathrm{M}}}
\newcommand{\rZ}{\ensuremath{\mathrm{Z}}}
\newcommand{\ol}{\overline}
\newcommand{\wt}{\widetilde}
\newcommand{\wh}{\widehat}
\newcommand{\amid}{\ensuremath{\, | \,}}
\newcommand{\eqstop}{\ensuremath{\, \text{.}}}
\newcommand{\eqcomma}{\ensuremath{\, \text{,}}}
\newcommand{\NN}{\ensuremath{\mathbb{N}}}
\newcommand{\ZZ}{\ensuremath{\mathbb{Z}}}
\newcommand{\QQ}{\ensuremath{\mathbb{Q}}}
\newcommand{\RR}{\ensuremath{\mathbb{R}}}
\newcommand{\CC}{\ensuremath{\mathbb{C}}}
\newcommand{\Hom}{\ensuremath{\mathop{\mathrm{Hom}}}}
\newcommand{\id}{\ensuremath{\mathrm{id}}}
\newcommand{\ra}{\ensuremath{\rightarrow}}
\newcommand{\Cmat}[1]{\ensuremath{\mathrm{M}_{#1}(\CC)}}
\newcommand{\Cmatnm}[2]{\ensuremath{\mathrm{M}_{#1, #2}(\CC)}}
\newcommand{\tr}{\ensuremath{\mathop{\mathrm{tr}}}}
\newcommand{\Tr}{\ensuremath{\mathop{\mathrm{Tr}}}}
\newcommand{\Cstar}{\ensuremath{\text{C}^*}}
\newcommand{\twonorm}{\ensuremath{\| \cdot \|_2}}
\newcommand{\supp}{\ensuremath{\mathop{\mathrm{supp}}}}
\newcommand{\bimod}{\ensuremath{\mathop{\mathrm{Bimod}}}}
\newcommand{\vNtensor}{\ensuremath{\overline{\otimes}}}
\newcommand{\bim}[3]{\mathord{\raisebox{-0.4ex}[0ex][0ex]{\scriptsize $#1$}{#2}\hspace{-0.2ex}\raisebox{-0.4ex}[0ex][0ex]{\scriptsize $#3$}}}
\newcommand{\lmo}[2]{\mathord{\raisebox{-0.4ex}[0ex][0ex]{\scriptsize $#1$}{#2}}}
\newcommand{\Ltwo}{\ensuremath{\mathrm{L}^2}}
\newcommand{\ltwo}{\ensuremath{\ell^2}}
\newcommand{\cspan}{\ensuremath{\mathop{\overline{\mathrm{span}}}}}
\newcommand{\dimvec}{\ensuremath{\mathop{\mathrm{dim}}}}
\newcommand{\Aut}{\ensuremath{\mathrm{Aut}}}
\newcommand{\Ad}{\ensuremath{\mathop{\mathrm{Ad}}}}
\newcommand{\Out}{\ensuremath{\mathrm{Out}}}
\newcommand{\Rep}{\ensuremath{\mathrm{Rep}}}
\newcommand{\grpaction}[1]{\ensuremath{\stackrel{#1}{\curvearrowright}}}
\newcommand{\SL}{\operatorname{SL}}
\newcommand{\FAlg}{\operatorname{Falg}}
\newcommand{\ANFAlg}{\operatorname{AFalg}}
\newcommand{\QN}{\operatorname{QN}}
\newcommand{\B}{\operatorname{B}}
\newcommand{\M}{\operatorname{M}}
\newcommand{\grp}{\operatorname{grp}}
\newcommand{\norm}{\operatorname{Norm}}
\newcommand{\Repfin}{\operatorname{Rep_{fin}}}
\newcommand{\Comm}{\operatorname{Comm}}
\newcommand{\eL}{\operatorname{L}}
\newcommand{\Bimod}{\operatorname{Bimod}}
\newcommand{\N}{\mathbb{N}}
\newcommand{\Z}{\mathbb{Z}}
\newcommand{\R}{\mathbb{R}}
\newcommand{\C}{\mathbb{C}}
\newcommand{\Q}{\mathbb{Q}}
\newcommand{\al}{\alpha}
\newcommand{\Om}{\Omega}
\newcommand{\vfi}{\varphi}
\newcommand{\actson}{\curvearrowright}
\newcommand{\droite}{\rightarrow}
\newcommand{\ot}{\otimes}
\newcommand{\ox}{\overline{x}}
\newcommand{\psg}{\langle}
\newcommand{\psd}{\rangle}
\newcommand{\vnt}{\vNtensor}
\newcommand{\CoRep}{\mathrm{CoRep}}
\newcommand{\CoRepfin}{\CoRep_{\mathrm{fin}}}
\newcommand{\linfty}{\ell^\infty}
\begin{document}

\begin{center}
{\LARGE \bf {Tensor \Cstar-categories arising as bimodule categories\\ of II$_1$ factors}}

\bigskip

{\sc by S\'{e}bastien Falgui\`{e}res$^{(1)}$ and Sven Raum$^{(2)}$
\setcounter{footnote}{1}
\footnotetext{Partially supported by the Fondation des Sciences Math\'{e}matiques de Paris}
    \setcounter{footnote}{2}\footnotetext{Partially supported by Marie Curie Research Training Network Non-Commutative Geometry MRTN-CT-2006-031962 and by K.U.Leuven BOF research grant OT/08/032}
    }
\end{center}

\begin{abstract}
\noindent We prove that if $\cC$ is a tensor \Cstar-category in a certain class, then there exists an uncountable family of pairwise non stably isomorphic II$_1$ factors $(M_i)$ such that the bimodule category of $M_i$ is equivalent to $\cC$ for all $i$. In particular, we prove that every finite tensor \Cstar-category is the bimodule category of a II$_1$ factor. As an application we prove the existence of a II$_1$ factor for which the set of indices of finite index irreducible subfactors is $\left \{1, \frac{5 + \sqrt{13}}{2}, 12 + 3\sqrt{13}, 4 + \sqrt{13}, \frac{11 + 3\sqrt{13}}{2}, \frac{13 + 3\sqrt{13}}{2}, \frac{19 + 5\sqrt{13}}{2}, \frac{7 + \sqrt{13}}{2} \right \}$. We also give the first example of a II$_1$ factor $M$ such that $\bimod(M)$ is explicitly calculated and has an uncountable number of isomorphism classes of irreducible objects.
\end{abstract}

\section{Introduction}

The description of symmetries of a II$_1$ factor $M$, such as the \emph{fundamental group} $\cF(M)$ of Murray and von Neumann and the \emph{outer automorphism group} $\Out(M)$, is a central and usually very hard problem in the theory of II$_1$ factors. Over the last ten years, Sorin Popa developed his \emph{deformation-rigidity} theory \cite{popa06-betti-numbers, popa06-strong-rigidity-1, popa06-strong-rigidity-2} and settled many long standing open problems in this direction. See \cite{vaes07, popa07-deformation-rigidity, vaes11} for a survey. In particular, he obtained the first complete calculations of fundamental groups \cite{popa06-betti-numbers} and outer automorphism groups \cite{ioanapetersonpopa08}. His methods were used in further calculations. Without being exhaustive, see for example \cite{popa06-strong-rigidity-1, popavaes10, deprez10} concerning fundamental groups and \cite{popavaes08, vaes08, falguieresvaes08} for outer automorphism groups.

Bimodules $\bim{M}{\cH}{M}$ over a II$_1$ factor $M$ having finite left and right $M$-dimension are said to be of finite \emph{Jones index} (see \cite{connes94, popa86}) and they give rise to a category, which we denote by $\Bimod(M)$.  Endowed with the Connes tensor product of $M$-$M$-bimodules, $\Bimod(M)$ is a compact tensor C$^*$-category, in the sense of Longo and Roberts \cite{longoroberts97}.

The bimodule category of a II$_1$ factor $M$ may be seen as a \emph{generalized symmetry group} of $M$. It contains a lot of structural information on $M$ and encodes several other invariants of $M$. Indeed, if $\grp(M)$ denotes the group-like elements in $\Bimod(M)$, i.e. bimodules of index $1$, one has the following short exact sequence
\[ 1 \to \Out(M) \to \grp(M) \to \cF(M) \to 1 \, .\]
Finite index subfactors $N \subset M$ are also encoded in a certain sense by the bimodule category $\Bimod(M)$, since, denoting $N \subset M \subset M_1$ the \emph{Jones basic construction}, we obtain a finite index bimodule $\bim{M}{\rL^2(M_1)}{M}$.

As explained above, in \cite{ioanapetersonpopa08} the first actual computation of the outer automorphism group of II$_1$ factors was achieved, using a combination of relative property (T) and amalgamated free products. Extending their methods, in \cite{vaes09}, Vaes proved the existence of a II$_1$ factor $M$ with trivial bimodule category. As a consequence, all the symmetry groups and subfactors of $M$ were trivial. Also relying on Popa's methods, in \cite{falguieresvaes10}, Vaes and the first author proved that the representation category of any compact second countable group can be realized as the bimodule category of a II$_1$ factor. More precisely, given a compact second countable group $G$, there exists a II$_1$ factor $M$ and a minimal action $G \actson M$ such that, denoting $M^G$ the fixed point II$_1$ factor, the natural fully faithful tensor functor $\Rep(G) \to \Bimod(M^G)$ is an equivalence of tensor $\Cstar$-categories. Both papers followed closely \cite{ioanapetersonpopa08} and thus, they give only existence results.

Explicit results on the calculation of bimodule categories are obtained in \cite{vaes08} and \cite{deprezvaes10}. Both articles are based on generalizations of Popa's seminal papers \cite{popa06-betti-numbers, popa06-strong-rigidity-1} on Bernoulli crossed products. In \cite{vaes08}, Vaes gave explicit examples of group-measure space II$_1$ factors $M$ for which the fusion algebra, i.e. isomorphism classes of finite index bimodules and fusion rules, were calculated. The complete calculation of the category of bimodules over II$_1$ factors coming from \cite{vaes08} was obtained by Deprez and Vaes in \cite{deprezvaes10}. Even more is proven in \cite{deprezvaes10}, since the C$^*$-bicategory of II$_1$ factors commensurable with $M$, i.e. those II$_1$ factors $N$ admitting a finite index $N$-$M$-bimodule, is also computed and explicitly arises as the bicategory associated with a Hecke pair of groups.

Note that by \cite{doplicherroberts89}, representation categories of compact second countable groups can be characterized abstractly as symmetric compact tensor \Cstar-categories with countably many isomorphism classes of irreducible objects. Among compact tensor \Cstar-categories, \emph{finite tensor \Cstar-categories}, i.e. those which admit only finitely many isomorphism classes of irreducible objects, form another natural class. In this article, we prove that every finite tensor \Cstar-category arises as the bimodule category of a II$_1$ factor.

\begin{thmstar}
\label{thm:finite-tensor-categories}
Let $\cC$ be a finite tensor \Cstar-category. Then there is a II$_1$ factor $M$ such that $\bimod(M) \simeq \cC$.
\end{thmstar}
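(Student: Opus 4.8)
The plan is to reduce $\cC$ to a concrete model living inside the bimodule category of the hyperfinite $\mathrm{II}_1$ factor, then to upgrade that model to a non-amenable $\mathrm{II}_1$ factor by the amalgamated-free-product and property-(T) machinery of Ioana--Peterson--Popa \cite{ioanapetersonpopa08} and its elaborations by Vaes \cite{vaes09} and Falgui\`eres--Vaes \cite{falguieresvaes10}, and finally to compute the whole bimodule category by deformation/rigidity. \emph{Step 1 (a hyperfinite model).} Being a finite tensor \Cstar-category, $\cC$ has finitely many simple objects and is therefore finitely generated as a tensor category; picking a self-dual generator together with a solution of the conjugate equations yields a finite-depth standard invariant. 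By the reconstruction theorems for finite-depth (a fortiori amenable) subfactors of Popa and Hayashi--Yamagami, this standard invariant is realised by a finite-index inclusion $N \subset P$ of copies of the hyperfinite $\mathrm{II}_1$ factor $R$, and the tensor category generated by $\bim{N}{\rL^2(P)}{N}$ inside $\bimod(N)$, with its Connes tensor product, is monoidally equivalent to $\cC$. Fixing representatives $\bim{R}{\cH_X}{R}$ of the simple objects $X \in \irr(\cC)$, we obtain a fully faithful unitary tensor functor $\cC \hra \bimod(R)$ onto a full, replete tensor subcategory, encoded concretely by finitely many hyperfinite intertwiner algebras carrying the fusion multiplicities.

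\emph{Step 2 (the factor).} Following \cite{vaes09, falguieresvaes10}, we build a $\mathrm{II}_1$ factor $M$ that contains $R$ together with the finite-depth data of Step 1, realised as (a corner of) an amalgamated free product over a common hyperfinite subalgebra built from $R$, in which one free factor has relative property (T) --- taken from an uncountable family of pairwise non stably isomorphic w-rigid $\mathrm{II}_1$ factors, as in \cite{ioanapetersonpopa08}, so as to produce an uncountable family $(M_i)$ rather than a single factor --- while the remaining free factor is chosen mixing and weakly mixing over $R$, exactly as in the $\Out$ computations of \cite{ioanapetersonpopa08, falguieresvaes10}. Each $\cH_X$ then induces a finite-index $M$-bimodule, giving a unitary tensor functor
\[ \cC \lra \bimod(M), \qquad X \longmapsto \rL^2(M) \ot_R \cH_X \ot_R \rL^2(M) \, . \]
That this functor is monoidal amounts to checking that $\cH_X \ot_M \cH_Y \iso \cH_X \ot_R \cH_Y$, i.e. that the Connes tensor product over $M$ collapses to the one over $R$, which follows from the free independence of the building blocks relative to $R$; faithfulness and fullness are inherited from Step 1.

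\emph{Step 3 (computing $\bimod(M)$, and the uncountable family).} Let $\bim{M}{\cK}{M}$ be an arbitrary finite-index bimodule, with associated finite-index inclusion $M \subseteq \wt M$. The malleable deformation of the free product, combined with the relative property (T) of the distinguished free factor and a spectral-gap argument, forces the deformation to converge uniformly on a corner of $\wt M$, which via Popa's intertwining-by-bimodules means that $M \subseteq \wt M$ intertwines into the copy of $R$ inside $M$. Hence $\cK$ is a sub-bimodule of $\rL^2(M) \ot_R \cL \ot_R \rL^2(M)$ for some finite-index $R$-bimodule $\cL$; a second, combinatorial step shows that finiteness of the $M$-index of $\cK$ forces $\cL$ to lie in the finite-depth tensor category of Step 1 --- here one uses that the free-product construction creates no new intermediate finite-depth subfactors of $R$, together with the rigidity of finite-depth standard invariants. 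Thus the functor of Step 2 is essentially surjective, hence an equivalence of tensor \Cstar-categories, and $\bimod(M) \iso \cC$. Re-running the rigidity argument on a stable isomorphism $M_i^t \iso M_j$ traces it back to an isomorphism of the w-rigid ingredients, so the $M_i$ are pairwise non stably isomorphic while all have bimodule category $\cC$; applying this to a fusion category attached to the Haagerup subfactor yields the index set recorded in the abstract.

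\emph{Main obstacle.} The crux is Step 3: the deformation/rigidity arguments of \cite{ioanapetersonpopa08} were tailored to automorphisms and amplifications and must be pushed to handle arbitrary finite-index bimodules while remaining compatible both with the monoidal structure and with the finite-depth bookkeeping that cuts the answer down from a subcategory of $\bimod(R)$ to exactly $\cC$. Controlling the fusion multiplicities, establishing the Connes-tensor-product collapse of Step 2, and arranging separability of all ingredients so that $(M_i)$ is genuinely uncountable are the principal technical points; the categorical input of Step 1 is classical.
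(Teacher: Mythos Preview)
Your overall architecture matches the paper's: realise $\cC$ via a hyperfinite subfactor, build $M$ as an amalgamated free product with a property~(T) ingredient \`a la Ioana--Peterson--Popa, then use rigidity to show the resulting functor $\cC \to \bimod(M)$ is an equivalence. However, your Step~2 contains a genuine gap. The functor $X \mapsto \rL^2(M) \ot_R \cH_X \ot_R \rL^2(M)$ does \emph{not} land in finite-index bimodules: since $R \subset M$ has infinite index (it must, $M$ being a nontrivial amalgamated free product), already the trivial object $\rL^2(R)$ is sent to $\rL^2(M) \ot_R \rL^2(M) \cong \rL^2(\langle M, e_R\rangle)$, which has infinite Jones index as an $M$-$M$-bimodule. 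The accompanying claim that ``the Connes tensor product over $M$ collapses to the one over $R$'' is ill-posed for the same reason and cannot be salvaged by free independence.

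The paper repairs this by insisting on a \emph{depth~$2$} realisation $N \subset Q$ (not merely finite depth) and working with $\bimod(Q \subset Q_1)$ for the basic construction $N \subset Q \subset Q_1$. Depth~$2$ guarantees that every object of $\cC$ can be written as $\cH(\psi)$ for a finite-index homomorphism $\psi: Q \to pQ^n p$ with $p \in (N' \cap Q)^n$ and $\psi(x) = xp$ for all $x \in N$. Writing $M = (P \vnt B) *_{N \vnt B} Q$ with $B = N' \cap Q$, such a $\psi$ extends through the free product to $\iota * \psi: M \to pM^n p$, still of finite index, and $F(\cH(\psi)) := \cH(\iota * \psi)$ is the correct functor; monoidality is then immediate from composition of homomorphisms. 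Your Step~3 also diverges in detail: the paper does not invoke a malleable deformation but rather IPP's location of rigid subalgebras in amalgamated free products together with control of quasi-normalisers, and then uses the specific features of $\Gamma = (\QQ^3 \oplus \QQ^3) \rtimes \SL(3,\QQ)$ --- no nontrivial finite-dimensional unitary representations, irreducibility of finite-index sublattices --- plus a freeness-of-fusion-algebras hypothesis (arranged by a Baire category argument) to force every finite-index $M$-bimodule to be trivial on the $P$-side and hence to lie in the essential range of $F$.
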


As an application of the above theorem, we prove the existence of a II$_1$ factor for which the set of indices of irreducible finite index subfactors can be explicitly calculated and contains irrationals. Recall the amazing theorem of Jones, proving that the index of an inclusion of II$_1$ factors $N \subset M$ ranges in the set
\[\cI = \left \{4\cos \left (\frac{\pi}{n} \right )^2 \mid  n = 3, 4, 5, \dotsc \right \} \cup [4, +\infty] \eqstop \]
Given a II$_1$ factor $M$, Jones defines the invariant
\[ \cC(M) = \left \{ \lambda \mid \text{there is a finite index irreducible inclusion } N \subset M \text{ of index}\ \lambda \right \} \eqstop\]
Jones proved that every element of $\cI$ arises as the index of a not necessarily irreducible subfactor of the hyperfinite II$_1$ factor. However, the problem of computing $\cC(R)$ is still widely open. In \cite{vaes08, vaes09}, Vaes proved the existence of II$_1$ factors $M$ for which $\cC(M) = \{1\}$ and $\cC(M) = \{ n^2 \mid n \in \N\}$. The invariant $\cC(M)$ is also computed in \cite{falguieresvaes10} and arises as the set of dimensions of some finite dimensional von Neumann algebras.  In \cite{deprezvaes10}, Deprez and Vaes constructed concrete group-measure space II$_1$ factors $M$ with $\cC(M)$ ranging over all sets of natural numbers that are closed under taking divisors and taking lowest common multiples.

All above results provide II$_1$ factors $M$ for which $\cC(M)$ is a subset of the natural numbers. However,  combining recent work on tensor categories \cite{grossmansnyder11} and our Theorem \ref{thm:finite-tensor-categories}, we prove the following theorem.

\begin{thmstar}
\label{thm:index-sets}
There exists a II$_1$ factor $M$ such that
\[\cC(M) =
  \left \{1, \frac{5 + \sqrt{13}}{2}, 12 + 3\sqrt{13}, 4 + \sqrt{13},
         \frac{11 + 3\sqrt{13}}{2}, \frac{13 + 3\sqrt  {13}}{2},
         \frac{19 + 5\sqrt{13}}{2}, \frac{7 + \sqrt{13}}{2}  \right \} \eqstop\]
\end{thmstar}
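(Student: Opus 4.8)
The plan is to combine Theorem~\ref{thm:finite-tensor-categories} with the classification, due to Grossman and Snyder \cite{grossmansnyder11}, of module categories over the Haagerup fusion categories. Fix a suitable Haagerup fusion category $\catC$; it is a finite tensor \Cstar-category, so by Theorem~\ref{thm:finite-tensor-categories} there is a II$_1$ factor $M$ with $\bimod(M) \simeq \catC$. Everything then reduces to computing the Jones invariant $\cC(M)$ in terms of $\catC$ alone, which makes sense because $\cC(M)$ depends only on the tensor \Cstar-category $\bimod(M)$ up to equivalence.

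The first step is the dictionary between irreducible finite index subfactors of $M$ and irreducible (haploid) Q-systems in $\bimod(M)$. Starting from an irreducible inclusion $N \subset M$ of index $\lambda$ with Jones basic construction $N \subset M \subset M_1$, one checks that $\bim{M}{\rL^2(M_1)}{M}$ is a Q-system in $\bimod(M)$, that its categorical dimension equals $[M_1 : M] = \lambda$, and that it is irreducible because $\Hom(\bim{M}{\rL^2(M)}{M}, \bim{M}{\rL^2(M_1)}{M}) \cong M' \cap M_1 \cong N' \cap M = \C$. Conversely, an irreducible Q-system $Q$ in $\bimod(M)$ of dimension $\lambda$ is the $\rL^2$-space of a finite index extension $M \subset \wt M$ with $\wt M$ again a II$_1$ factor and $[\wt M : M] = \lambda$, and the downward basic construction then yields an irreducible $N \subset M$ with $[M:N] = \lambda$. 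Hence
\[ \cC(M) = \bigl\{\, d(Q) \mid Q \text{ an irreducible Q-system in } \bimod(M) \simeq \catC \,\bigr\} \eqcomma \]
where $d(\cdot)$ denotes the categorical dimension of the underlying object.

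The second step is to evaluate the right-hand side. Irreducible Q-systems in $\catC$ are in bijection with pairs $(\cM, m)$, where $\cM$ is an indecomposable semisimple $\catC$-module category and $m \in \cM$ is a simple object, via $Q = \underline{\Hom}_\catC(m,m)$ --- and in the \Cstar-setting this connected separable algebra carries a canonical Q-system structure --- so $d(Q) = d(\underline{\Hom}_\catC(m,m))$. Grossman and Snyder classify all indecomposable module categories over the Haagerup fusion categories, and from their description one reads off the internal Hom, and hence its dimension, attached to each simple object of each module category. Carrying out this bookkeeping for the Haagerup fusion category $\catC$ that produces the asserted list then gives exactly the displayed eight-element set. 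Note that the choice among the Morita equivalent Haagerup categories matters here: unlike the collection of module categories, the set of dimensions of Q-systems is not a Morita invariant.

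The conceptual input is imported --- Theorem~\ref{thm:finite-tensor-categories} for the realization and \cite{grossmansnyder11} for the module category classification --- and the steps in between are standard subfactor and tensor category theory. Accordingly, the main points requiring care are bookkeeping ones: ensuring that the Jones index is the categorical dimension of the Q-system object itself (and not, say, its square root), verifying that every irreducible Q-system in $\bimod(M)$ is genuinely realized by a subfactor of $M$ rather than only by an extension, and pinning down which of the Morita equivalent Haagerup fusion categories yields the stated set of indices.
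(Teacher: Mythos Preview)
Your proposal is correct and follows essentially the same approach as the paper: realize the Haagerup fusion category as $\bimod(M)$ via Theorem~\ref{thm:finite-tensor-categories}, translate $\cC(M)$ into the set of dimensions of irreducible Q-systems (equivalently, irreducible special Frobenius algebras), and read these off from Grossman--Snyder. The paper organizes the last step slightly differently---it quotes \cite[Lemma 3.9]{grossmansnyder11} for the list of possible principal graphs and then verifies each is realized, in part by the very module-category computations you describe---but the substance is the same.
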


In \cite{vaes08}, \cite{falguieresvaes10} and \cite{deprezvaes10} only categories with at most countably many isomorphism classes of irreducible objects were obtained as bimodule categories of II$_1$ factors. In this article we give examples of II$_1$ factors $M$ such that $\bimod(M)$ can be calculated and has uncountably many pairwise non isomorphic irreducible objects. For example, if $G$ is a countable, discrete group, we prove the existence of a II$_1$ factor $M$ such that $\bimod(M) \simeq \Repfin(G)$. Here, $\Repfin(G)$ denotes the category of finite dimensional unitary representations of $G$.

\begin{thmstar}
\label{thm:representation-categories}
Let $\cC$ denote one of the following compact tensor $\Cstar$-categories. Either $\cC = \Repfin(G)$ for a countable discrete group, or $\cC = \CoRepfin(A)$ for an amenable or a maximally almost periodic discrete Kac algebra $A$. Then, there is a II$_1$ factor $M$ such that $\bimod(M) \simeq \cC$.
\end{thmstar}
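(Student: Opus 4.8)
The plan is to run the construction behind Theorem~\ref{thm:finite-tensor-categories}, letting the amenability, respectively maximal almost periodicity, hypothesis take over the role played there by the finiteness of $\cC$. The first step is to realise both families of categories as representation categories of compact Kac algebras. For a countable discrete group $G$, the Bohr compactification $bG$ is a compact group with $\Rep(bG) \simeq \Repfin(G)$ by its universal property; since $G$ is countable, $bG$ is an inverse limit of compact Lie groups, but in general not second countable --- which is exactly why \cite{falguieresvaes10} does not apply directly and why $\Repfin(G)$ may have uncountably many isomorphism classes of irreducible objects, as already happens for $\Repfin(\Z)$. For a discrete Kac algebra $A$, the dual compact Kac algebra $\widehat A$ satisfies $\Rep(\widehat A) \simeq \CoRepfin(A)$ by Kac algebra duality. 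In all cases I write $\mathbb{K}$ for the resulting compact Kac algebra, so that $\cC \simeq \Rep(\mathbb{K})$.

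The second step is to endow the hyperfinite II$_1$ factor $R$ with a minimal action of $\mathbb{K}$; this is the only place where the hypotheses enter, and it replaces the finiteness assumption of Theorem~\ref{thm:finite-tensor-categories}. When $\mathbb{K}$ is an inverse limit $\varprojlim_\alpha \mathbb{K}_\alpha$ of compact Kac algebras each having only finitely many irreducible representations --- the group case, since $bG$ is pro-Lie, as well as the maximally almost periodic Kac case --- I would take a minimal action of each $\mathbb{K}_\alpha$ on a matrix algebra, form an infinite tensor product along a cofinal chain, and check that the resulting action on $R$ is again minimal. When instead $A$ is amenable, so that $\mathbb{K} = \widehat A$ is co-amenable, one uses the existence of minimal actions of co-amenable compact Kac algebras on $R$.

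The third step is the rigidity argument. Following \cite{vaes09} and \cite{falguieresvaes10}, I would enlarge $R$, together with its $\mathbb{K}$-action, to a II$_1$ factor $P$ with trivial bimodule category --- by rigidifying it inside an amalgamated free product over a relative property (T) inclusion, arranged so that the $\mathbb{K}$-action extends to $P$ and stays minimal --- and then put $M = P^{\mathbb{K}}$. The $\mathbb{K}$-spectral subspaces $\cH_\pi$ of the bimodule $\bim{M}{\rL^2(P)}{M}$, indexed by $\pi \in \irr(\mathbb{K})$, assemble into a fully faithful tensor functor $\Rep(\mathbb{K}) \to \bimod(M)$, full faithfulness being minimality of the action; essential surjectivity follows from the triviality of $\bimod(P)$ via a Galois-type correspondence together with Popa's intertwining-by-bimodules, which force every finite index $M$-$M$-bimodule to decompose over $\irr(\mathbb{K})$. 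Hence $\bimod(M) \simeq \Rep(\mathbb{K}) \simeq \cC$, and carrying the argument out over a suitable uncountable family of pairwise non stably isomorphic rigid factors, as in \cite{vaes09, falguieresvaes10}, yields the uncountable family promised in the abstract.

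I expect the main obstacle to be the rigidity in the third step in the \emph{absence of any finiteness or countability} of the set $\irr(\cC)$: one must exclude exotic finite index $M$-$M$-bimodules uniformly over the uncountably many isomorphism classes of irreducible objects of $\cC$, which forces Popa's intertwining estimates to be carried out without separability hypotheses and requires checking that minimality of $\mathbb{K} \actson R$ survives the amalgamated free product rigidification. A second, smaller difficulty is the construction of the minimal action in the second step outside the co-amenable range; this is precisely why the Kac algebra statement is restricted to amenable or maximally almost periodic $A$, whereas for groups, where $\mathbb{K} = bG$ is automatically pro-Lie, no restriction is needed.
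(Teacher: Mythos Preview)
Your route through a compact Kac algebra $\mathbb{K}$ with $\Rep(\mathbb{K})\simeq\cC$ has a genuine obstruction, and it appears already in Step~1. The identification $\Rep(\widehat A)\simeq\CoRepfin(A)$ is not what Kac duality gives: for $A=\linfty(\Z)$ the compact dual is $\widehat A=\rL(\Z)\cong\Linfty(\bT)$, whose (co)representation category has irreducibles indexed by $\Z$, while $\CoRepfin(\linfty(\Z))\simeq\Repfin(\Z)$ has irreducibles indexed by $\bT$. More to the point, any compact quantum group with separable predual has only countably many irreducible representations by Peter--Weyl, so when $\irr(\cC)$ is uncountable---the interesting case here---no separable compact $\mathbb{K}$ can satisfy $\Rep(\mathbb{K})\simeq\cC$. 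This is exactly why you were pushed to the non--second-countable $bG$. But then a \emph{minimal} action of $bG$ on $R$ is impossible: the spectral subspaces, one for each of the uncountably many elements of $\irr(bG)$, would all have to be non-zero and mutually orthogonal inside the separable $\Ltwo(R)$. And the rigidity machinery you want to invoke in Step~3---intertwining-by-bimodules, the IPP theorem for amalgamated free products, the Baire-category freeness argument---all require separable preduals. The difficulty you flag at the end is therefore not a technicality to be worked around but a hard obstruction to the compact/minimal-action/fixed-point strategy.

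The paper circumvents this entirely by staying on the \emph{discrete} side and never attempting to realise $\cC$ as $\Rep(\mathbb{K})$ for a compact object. Instead one constructs a strictly outer action of the discrete object itself on $R$: the Bernoulli shift $G\curvearrowright(\Cmat{2},\tr)^{\otimes G}$ for a group, and for a Kac algebra $A$ a faithful corepresentation of $A^{\mathrm{coop}}$ into $R$ (amenability or maximal almost periodicity are used precisely here) combined with \cite[Theorem~8.2]{vaes05-strictly-outer-actions}. The resulting inclusion $R\subset A\ltimes R$ is depth~$2$, quasi-regular, with $R$ and the relative commutant hyperfinite, and---crucially---everything has separable predual even though $\irr(\cC)$ may be uncountable. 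One then feeds this inclusion into Theorem~\ref{thm:construct-bimodule-categories}, having checked in the appendix (Theorem~\ref{thm:representation-categories-for-subfactors}) that $\bimod(A\ltimes R\subset\widehat A^{\mathrm{coop}}\ltimes A\ltimes R)\simeq\CoRepfin(A^{\mathrm{coop}})$. The uncountability of $\irr(\cC)$ is thus absorbed into the category $\bimod(Q\subset Q_1)$ and never into the von Neumann algebras themselves.
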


Our construction consists of two main steps.
\begin{enumerate}
\item
Given any quasi-regular, depth 2 inclusion $N \subset Q$ of II$_1$ factors, such that $N$ and $N' \cap Q$ are hyperfinite, denote by $N \subset Q \subset Q_1$ the Jones basic construction.  We construct a II$_1$ factor $M$ and a fully faithful tensor \Cstar-functor $F: \bimod(Q \subset Q_1) \ra \bimod(M)$ (see Section \ref{sec.Bimod_subfactor} for the bimodule category associated with an inclusion of II$_1$ factors).
\item
Using Ioana, Peterson and Popa's rigidity results for amalgamated free product von Neumann algebras \cite{ioanapetersonpopa08}, we prove that under suitable assumptions (see Theorem \ref{thm.main}) the functor $F$ is essentially surjective.
\end{enumerate}

The above steps yield a II$_1$ factor $M$ such that $\bimod(M) \simeq \bimod(Q \subset Q_1)$. Using the setting of \cite{ioanapetersonpopa08}, as in \cite{falguieresvaes08, falguieresvaes10, vaes09}, this result is not constructive. We only prove an existence theorem, which involves a Baire category argument (see Theorem \ref{theo.free}). More precisely, we prove the following Theorem \ref{thm:construct-bimodule-categories} and Theorems \ref{thm:finite-tensor-categories} and \ref{thm:representation-categories} are obtained as corollaries.

\begin{thmstar}
\label{thm:construct-bimodule-categories}
Let $N \subset Q$ be a quasi-regular and depth 2 inclusion of II$_1$ factors. Assume that $N$ and $N' \cap Q$ are hyperfinite and denote by $N \subset Q \subset Q_1$ the basic construction. Then, there exist uncountably many pairwise non-stably isomorphic II$_1$ factors $(M_i)$ such that for all $i$ we have $\bimod(M_i) \simeq \bimod(Q \subset Q_1)$ as tensor \Cstar-categories.
\end{thmstar}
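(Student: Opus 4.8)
The plan is to carry out the two-step scheme sketched above, and then upgrade the single factor $M$ it produces to an uncountable family by a genericity argument.

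\emph{Step 1: constructing $M$ together with the functor $F$.} First observe that, since $N \subset Q$ is quasi-regular and depth $2$, the $Q$-$Q$-bimodule $\rL^2(Q_1) = \rL^2(\langle Q, e_N \rangle)$ and all of its Connes tensor powers decompose into a (possibly uncountable) orthogonal sum of finite-index $Q$-$Q$-sub-bimodules, so that $\bimod(Q \subset Q_1)$ is indeed a well-defined tensor \Cstar-category of finite-index bimodules (cf. Section \ref{sec.Bimod_subfactor}). I would then embed the inclusion $N \subset Q$ into a rigid ambient factor, following the strategy of \cite{ioanapetersonpopa08, vaes09, falguieresvaes10}: fix a group $\Gamma$ with a subgroup $\Lambda$ such that $(\Gamma, \Lambda)$ has relative property (T), together with a suitable free mixing pmp action; enlarge $Q$ by the relevant hyperfinite algebras built from $N$ and $N' \cap Q$; and form an amalgamated free product $M$ of this enlargement of $Q$ with a rigid crossed product, amalgamated over a common amenable subalgebra. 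After checking that the relevant subfactors of $M$ (in particular the distinguished copy of $Q$) are irreducible, one transports the finite-index $Q$-$Q$-bimodules in $\bimod(Q \subset Q_1)$ to finite-index $M$-$M$-bimodules using the explicit structure of $M$; this defines a tensor \Cstar-functor $F : \bimod(Q \subset Q_1) \to \bimod(M)$, and faithfulness and fullness follow from the irreducibility of the inclusions involved.

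\emph{Step 2: essential surjectivity via IPP rigidity.} Given an arbitrary finite-index $M$-$M$-bimodule $\cH$, one must show that it is isomorphic to $F(\cK)$ for some $\cK \in \bimod(Q \subset Q_1)$. Here I would invoke the Ioana--Peterson--Popa control of intertwiners in amalgamated free product von Neumann algebras: via the standard correspondence, $\cH$ yields finite-index data for $M$ inside an amplification, and the relative property (T) built into $M$ forces this data to be localized over the amalgam, hence over the hyperfinite algebra containing $N$. This is exactly where the hypotheses that $N$ and $N' \cap Q$ are hyperfinite are used: they supply the malleable deformation that absorbs the rigid part and they prevent any rigidity from being hidden inside $Q$ or $N' \cap Q$. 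Tracking this localization through the quasi-regular, depth-$2$ structure of $N \subset Q$ identifies $\cH$ with an object induced from $\bimod(Q \subset Q_1)$; together with Step 1 this gives $\bimod(M) \simeq \bimod(Q \subset Q_1)$ as tensor \Cstar-categories.

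\emph{Step 3: from one factor to uncountably many.} The construction in Step 1 depends on a parameter ranging over an uncountable Polish space (a choice of cocycle, or of a ``free'' family of embeddings, as in the Baire-category statement quoted above as Theorem \ref{theo.free}), and Steps 1 and 2 produce $\bimod(M) \simeq \bimod(Q \subset Q_1)$ uniformly in that parameter. A uniqueness theorem for the amalgamated free product, again of IPP type, shows that a stable isomorphism between two of the resulting factors forces a rigid relation between the two parameters, which for a fixed parameter can hold for at most countably many others; a Baire-category argument then extracts an uncountable subfamily $(M_i)$ that is pairwise non-stably-isomorphic while still satisfying $\bimod(M_i) \simeq \bimod(Q \subset Q_1)$ for every $i$. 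The crux of the whole argument is Step 2: one must verify that the amalgamated free product built in Step 1 actually satisfies the hypotheses of the IPP rigidity theorems --- relative property (T) relative to the amalgam, and the malleability and mixing coming from the hyperfinite side --- and, above all, one must analyze a completely arbitrary finite-index $M$-$M$-bimodule, which carries no a priori group-theoretic structure, and pin down that it must be induced from $\bimod(Q \subset Q_1)$ rather than arising from the free-product or hyperfinite degrees of freedom. The hypotheses ``quasi-regular'', ``depth $2$'' and ``$N$, $N' \cap Q$ hyperfinite'' are precisely what make this analysis go through.
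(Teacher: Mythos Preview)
Your outline has the right large-scale architecture (amalgamated free product, IPP rigidity, a parameter family), but several of the mechanisms you invoke are not the ones that actually make the argument work, and one crucial ingredient is entirely absent.

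\textbf{The missing key hypothesis: freeness of fusion algebras.} In Step~2 you write that ``tracking this localization through the quasi-regular, depth-2 structure of $N \subset Q$ identifies $\cH$ with an object induced from $\bimod(Q \subset Q_1)$''. This is exactly where the real difficulty lies, and quasi-regularity and depth~2 are not enough. The paper's construction is $M = (P \vNtensor B) *_{N \vNtensor B} Q$ with $P = \rL_{\Omega_\alpha}(\Gamma)$ a twisted group factor, and the essential surjectivity argument (Lemmas~\ref{lem:contained-in-P}--\ref{lem:trivial-P-bimodule}) needs two things you never mention: (i) the fusion algebras $\ANFAlg(N \subset P)$ and $\FAlg(N \subset Q)$ must be \emph{free} inside $\FAlg(N)$ in the sense of Bisch--Jones, and (ii) the group $\Gamma$ must have no non-trivial finite-dimensional unitary representations. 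Freeness is what rules out $N$-$N$-subbimodules of $\cH$ that genuinely mix the $P$-side and the $Q$-side; the absence of finite-dimensional representations is what forces the $P$-$P$-structure of any finite-index subbimodule to be trivial. Neither of these comes for free from the hypotheses on $N \subset Q$; freeness is arranged \emph{a posteriori} via Theorem~\ref{theo.free}, and this is where the Baire category argument actually enters --- not in Step~3.

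\textbf{Misidentified ingredients.} There is no ``free mixing pmp action'' and no ``malleable deformation'' in the construction: $P$ is a twisted group von Neumann algebra, and the rigidity input is the genuine property~(T) of $N_0 = \rL_{\Omega_\alpha}(\ZZ^3 \oplus \ZZ^3 \rtimes \SL(3,\ZZ))$, used via Theorem~\ref{IPP1} and Theorem~\ref{thm:characterization-property-T}. Hyperfiniteness of $N$ is used only so that $N$ can be identified with $\rL_{\Omega_\alpha}(\Lambda)$ and so that Theorem~\ref{theo.free} applies; hyperfiniteness of $B = N' \cap Q$ is used in Lemma~\ref{lem:fi-N-bimodule} to approximate $B$ by finite-dimensional algebras and thereby intertwine $\varphi(N_0)$ into $P$. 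None of this is a malleability/s-malleability argument.

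\textbf{The uncountability mechanism.} In Step~3 you appeal to Baire category and an IPP-type uniqueness theorem to separate the $M_i$. The paper does something quite different: the parameter is the irrational $\alpha$ defining the cocycle $\Omega_\alpha$, and pairwise non-stable-isomorphism is deduced from Theorem~\ref{thm.nicoarapopasasyk} (Nicoara--Popa--Sasyk): for a fixed separable II$_1$ factor, only countably many cohomology classes $[\Omega_\alpha]$ can admit a non-trivial $*$-homomorphism $\rL_{\Omega_\alpha}(\ZZ^3 \oplus \ZZ^3 \rtimes \SL(3,\ZZ)) \to M^\infty$. The same theorem is also used at the outset to select $\alpha$ so that $N_0$ does not embed into any amplification of $Q$, which is another hypothesis of Theorem~\ref{thm.main} that your sketch omits.
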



\subsubsection*{Acknowledgements}

The second author wants to thank his advisor Stefaan Vaes for his constant care and excellent supervision. This work was strongly influenced by his ideas. The second author is greatful to the Laboratoire de Math\'{e}matiques Nicolas Oresme at the university of Caen for its kind hospitality during his stay in autumn 2010. Both authors want to thank the Institut Henri Poincar\'{e} where part of this article has been written, during the program ``Von Neumann algebras and ergodic theory of group action''  in spring 2011.
\section{Preliminaries and notations}
In this paper, von Neumann algebras are assumed to act on a separable Hilbert space. A von Neumann algebra $(M,\tau)$ endowed with a faithful normal tracial state $\tau$ is called a tracial von Neumann algebra. We define $\Ltwo(M)$ as the GNS Hilbert space with respect to $\tau$.

Whenever $M$ is a von Neumann algebra, we write $M^n = \M_n(\CC) \ot M$ and $M^\infty = \B(\ltwo(\N)) \vnt M$. Whenever $\cH$ is a Hilbert space, we also denote $\cH^\infty = \ltwo(\NN) \ot \cH$.

 If $B \subset M$ is a tracial inclusion of von Neumann algebras, then we denote by $\rE_B$ the trace preserving conditional expectation of $M$ onto $B$. Also if $pB^np \subset pM^np$ is an amplification of $B \subset M$, we still denote by $\rE_B$ the trace preserving conditional expectation onto $pB^np$.

\subsection{Finite index bimodules}\label{section.fibimodules}
Let $M$, $N$ be tracial von Neumann algebras. An $M$-$N$-bimodule $\bim{M}{\cH}{N}$ is a Hilbert space $\cH$ equipped with a normal representation of $M$ and a normal anti-representation of $N$ that commute. Bimodules over von Neumann algebras were studied in \cite[V.Appendix B]{connes94} and \cite{popa86}.

Let $\cH$ be an $M$-$N$-bimodule. There exists a projection $p \in N^\infty$ such that $$\cH_N \cong \bigl( p \eL^2(N)^\infty \bigr)_N \; \; ,$$ and this projection $p$ is uniquely defined up to equivalence of projections in $N^\infty$. There also exists a $*$-homomorphism $\psi : M \droite p N^\infty p$ such that $\bim{M}{\cH}{N}$ is isomorphic with the $M$-$N$-bimodule $\cH(\psi)$ defined as Hilbert space $p\rL^2(N)^\infty$ and endowed with actions given by
\[a \cdot \xi = \psi(a) \xi \quad \text{and} \quad \xi \cdot b = \xi b \quad \text{and} \quad a \in M \quad , \quad b \in N \quad , \quad \xi \in p\rL^2(N)^\infty \; .\]
Furthermore, if $\psi : M \to p N^\infty p$ and $\eta : M \to q N^\infty q$, then $\bim{M}{\cH(\psi)}{N} \cong \bim{M}{\cH(\eta)}{N}$ if and only if there exists $u \in N^\infty$ satisfying $u u^* = p$, $u^* u = q$ and $\psi(a) = u \eta(a) u^*$ for all $a \in M$.

Note that $M$-$N$-bimodules $\bim{M}{\cH}{N}$ can also be described by means of right actions via $*$-homomorphisms $\psi : N \to p M^\infty p$ as $$\bim{M}{\cH}{N} \cong \bim{M}{\bigl ( (  \ell^2(\N)^* \ot \rL^2(M) )p \bigr )}{\psi(N)} \; \text{.}$$

Let $\cH$ be a right $N$-module and write $\cH_N \cong \bigl (p \eL^2(N)^\infty \bigr)_N$, for a projection $p\in N^\infty$.  Denote $\dim_{\, \text{-}N}(\cH) = (\Tr \ot \tau)(p)$. Observe that the number $\dim_{\, \text{-}N}(\cH)$ depends on the choice of the trace $\tau$, if $N$ is not a factor.

An $M$-$N$-bimodule $\bim{M}{\cH}{N}$ is said to be of \emph{finite Jones index} if $\dim_{M \text{-}}(\cH) < +\infty$ and $\dim_{\, \text{-}N} (\cH) < +\infty$. In particular, the \emph{Jones index} of a subfactor $N \subset M$ is defined as $[M : N]= \dim_{\, \text{-}N} (\eL^2(M))$, see \cite{jones83}. Using the above notations, consider a bimodule of the form $\bim{M}{\cH(\psi)}{N}$ with finite Jones index. Then, one may assume that $\psi$ is a finite index inclusion $\psi : M \droite p N^n p$.

\subsection{Popa's intertwining-by-bimodules technique}

In \cite[Section 2]{popa06-strong-rigidity-1}, Popa introduced a very powerful technique to deduce unitary conjugacy of two von Neumann subalgebras $A$ and $B$ of a tracial von Neumann algebra $M$ from their \emph{embedding} $A \prec_M B$, using \emph{interwining bimodules}. When $A,B \subset M$ are Cartan subalgebras of a II$_1$ factor $M$, Popa proves \cite[Theorem A.1]{popa06-betti-numbers} that $A \prec_M B$ if and only if $A$ and $B$ are actually conjugated by a unitary in $M$. We also recall the notion of \emph{full embedding} $A \prec_M^f B$ of $A$ into $B$ inside $M$.

\begin{definition} \label{def.embed}
Let $M$ be a tracial von Neumann and $A,B \subset M^n$ be possibly non-unital subalgebras. We write
\begin{itemize}
\item
$A \prec_M B$ if $1_A \eL^2(M^n)1_B$ contains a non-zero $A$-$B$-subbimodule $\cK$ that satisfies $\dim_{\, \text{-}B}(\cK) < \infty$.
\item
$A \prec_M^f B$ if $Ap \prec_M B$ for every non-zero projection $p\in 1_A M^n 1_A \cap A'$.
\end{itemize}
\end{definition}

We will use the following characterization of embedding of subalgebras. It can be found in \cite[Theorem 2.1 and Corollary 2.3]{popa06-strong-rigidity-1} (see also Appendix~F in \cite{brownozawa08}).

\begin{theorem}[See \cite{popa06-strong-rigidity-1}]
\label{thm:intertwining}
Let $M$ be a tracial von Neumann algebra and $A, B \subset M^n$ possibly non-unital subalgebras. The following are equivalent.
\begin{itemize}
\item $A \prec_M B$,
\item there exist $m \in \N$, a $*$-homomorphism $\psi : A \ra pB^mp$  and a non-zero partial isometry $v \in 1_A \bigl( \M_{1,m}(\CC) \ot M^n \bigr)p$ satisfying $a v = v \psi(a)$ for all $a \in A$,
\item there is no sequence of unitaries $u_k \in \cU(A)$ such that $\|\rE_B(xu_ky)\|_2 \ra 0$ for all $x,y \in M^n$.
\end{itemize}
\end{theorem}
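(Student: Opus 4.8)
The plan is to prove the three conditions equivalent as a cycle (iii) $\Rightarrow$ (i) $\Rightarrow$ (ii) $\Rightarrow$ (iii), with the first implication carrying essentially all of the analytic weight. Throughout I work inside the Jones basic construction: let $e_B$ be the orthogonal projection of $\eL^2(M^n)$ onto $\eL^2(B)$, and let $\langle M^n, e_B\rangle$ be the von Neumann algebra generated by $M^n$ and $e_B$, equipped with its canonical semifinite trace $\Tr$ determined by $\Tr(x e_B y) = \tau(xy)$ for $x, y \in M^n$. Two standard dictionary entries are used repeatedly: projections $q \in \langle M^n, e_B\rangle$ with $\Tr(q) < \infty$ correspond exactly to right $B$-submodules of $\eL^2(M^n)$ with $\dim_{\,\text{-}B} = \Tr(q)$, and such a $q$ commutes with the left action of $A$ if and only if the corresponding submodule is an $A$-$B$-subbimodule.

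For (iii) $\Rightarrow$ (i) I prove the contrapositive $\neg$(i) $\Rightarrow$ $\neg$(iii) by an averaging argument over $\cU(A)$. Assume $A \not\prec_M B$, and consider $K = \overline{\mathrm{co}}^{\,w}\{ u e_B u^* \mid u \in \cU(A)\}$ inside $\langle M^n, e_B\rangle$. Its elements are positive contractions of constant finite trace $\Tr(u e_B u^*) = \tau(1_A)$, so $K$ is a bounded, weakly closed, convex subset of the Hilbert space $\Ltwo(\langle M^n, e_B\rangle, \Tr)$ and contains a unique element $c$ of minimal $\| \cdot \|_{2,\Tr}$-norm. Uniqueness together with the invariance $u K u^* = K$ for $u \in \cU(A)$ forces $u c u^* = c$, so that $0 \le c \in A' \cap \langle M^n, e_B\rangle$ with $\Tr(c) < \infty$. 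Now $A \not\prec_M B$ forces $c = 0$: otherwise a spectral projection $q = \mathbf 1_{[\delta, \infty)}(c)$ for small $\delta > 0$ is a nonzero projection in $A' \cap \langle M^n, e_B\rangle$ with $\Tr(q) \le \delta^{-1}\Tr(c) < \infty$, whose range is, by the dictionary above, a nonzero $A$-$B$-subbimodule of $1_A \eL^2(M^n) 1_B$ of finite right $B$-dimension, i.e. $A \prec_M B$. Thus $c = 0$, which means $\inf\{ \|a\|_{2,\Tr} \mid a \in \mathrm{co}\{u e_B u^*\} \} = 0$; unravelling this through the pairing $\langle u e_B u^*, y e_B x\rangle_{\Tr} = \tau(x u \, \rE_B(u^* y))$ and a maximality/separability argument produces a single sequence $u_k \in \cU(A)$ with $\|\rE_B(x u_k y)\|_2 \to 0$ for all $x, y \in M^n$, which is precisely $\neg$(iii).

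The two remaining arrows are formal. For (i) $\Rightarrow$ (ii), given a nonzero $A$-$B$-subbimodule $\cK \subset 1_A \eL^2(M^n) 1_B$ with $\dim_{\,\text{-}B}(\cK) < \infty$, the structure theory of finite-index bimodules recalled in Section~\ref{section.fibimodules} provides $m \in \N$, a $*$-homomorphism $\psi : A \to p B^m p$, and an isomorphism $\cK \cong \cH(\psi)$; composing the bimodular isometry $\cH(\psi) \to \cK \hookrightarrow 1_A \eL^2(M^n) 1_B$ with this identification realizes the inclusion as a partial isometry $v \in 1_A(\M_{1,m}(\CC) \ot M^n) p$ with $v^* v = p$ and $a v = v \psi(a)$ for all $a \in A$, which is (ii). For (ii) $\Rightarrow$ (iii), write $v = (v_1, \dots, v_m)$ with $v_j \in M^n$. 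For $u \in \cU(A)$ the relation $a v = v \psi(a)$ gives $v^* u v = v^* v\, \psi(u)$; since $\psi(u)$ is a unitary in the corner $p B^m p$ (because $\psi(u)\psi(u)^* = \psi(1_A) = p$), right multiplication by it preserves the $2$-norm, so $v^* u v \in B^m$ has constant norm $\| v^* u v \|_2 = \| v^* v \|_2 =: \kappa > 0$. As the $(i,j)$-entry of $v^* u v$ is $\rE_B(v_i^* u v_j) = v_i^* u v_j \in B$, we get $\sum_{i,j} \|\rE_B(v_i^* u v_j)\|_2^2 = \kappa^2$ for every $u \in \cU(A)$, so no sequence can annihilate all $\rE_B(x u_k y)$ (take $x = v_i^*$, $y = v_j$), which is (iii).

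The main obstacle is concentrated in the arrow (iii) $\Rightarrow$ (i), specifically in proving that the barycentric element $c$ vanishes exactly when a $\| \cdot \|_2$-annihilating sequence of unitaries exists. Two points require care: the two-sided control of $\rE_B(x u_k y)$, rather than merely of one-sided expressions such as $\rE_B(u_k^* y)$, which is obtained by testing $c$ against the $\| \cdot \|_{2,\Tr}$-dense set of elements $y e_B x$ with $x, y \in M^n$; and the extraction of a single sequence out of approximating convex combinations, which uses a maximality argument together with separability of the predual. Once the basic-construction dictionary is in place, the remaining implications are routine. This is exactly the content of \cite[Theorem 2.1 and Corollary 2.3]{popa06-strong-rigidity-1}, with the streamlined treatment of \cite[Appendix F]{brownozawa08}.
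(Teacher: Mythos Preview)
The paper does not give its own proof of this theorem: it is quoted from \cite[Theorem 2.1 and Corollary 2.3]{popa06-strong-rigidity-1} (with a pointer to \cite[Appendix F]{brownozawa08}), and your sketch is exactly the standard argument from those references, organized around the basic construction $\langle M^n,e_B\rangle$ and the unique $\|\cdot\|_{2,\Tr}$-minimizer in the convex hull of $\{u e_B u^*\}$. So on the level of ``which proof is this,'' there is nothing to compare.

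One genuine slip to fix in your (ii) $\Rightarrow$ (iii): you assert that the $(i,j)$-entry $v_i^* u v_j$ of $v^* u v = v^* v\,\psi(u)$ already lies in $B$, which would require $v^* v \in B^m$. That is not part of the hypotheses (you only know $v^* v \in \psi(A)' \cap p M^{nm} p$). The correct version is to apply $\rE_B$ entrywise: since $\psi(u) \in p B^m p$ and $\rE_{B^m}$ is a $B^m$-bimodule map, $\rE_{B^m}(v^* u v) = \rE_{B^m}(v^* v)\,\psi(u)$, and right multiplication by the unitary $\psi(u) \in \cU(pB^m p)$ preserves $\|\cdot\|_2$ on elements supported under $p$. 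Hence
\[
\sum_{i,j}\|\rE_B(v_i^* u v_j)\|_2^2 \;=\; \|\rE_{B^m}(v^* v)\,\psi(u)\|_2^2 \;=\; \|\rE_{B^m}(v^* v)\|_2^2 \;>\; 0
\]
is constant in $u$, which is what you need. Your (iii) $\Rightarrow$ (i) is correct in outline; the passage from ``$0$ lies in the closed convex hull'' to a single sequence of unitaries is, as you indicate, the standard finite-subset/diagonal argument using separability, and is carried out in the references you cite.
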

Note that the entries of $v$ as in in the previous theorem span an $A$-$B$-bimodule $\cK \subset \rL^2(M^n)$ such that $\dim_{\, \text{-}B}( \cK) < \infty$.

We will make use of Theorem \ref{thm.intertw.vaes} due to Vaes, \cite[Theorem 3.11]{vaes08}. We first recall the notion of essentially finite index inclusions of II$_1$ factors (see \cite[Proposition A.2]{vaes08}) and embedding of von Neumann subalgebras inside a bimodule.

Let $N \subset M$ be an inclusion of tracial von Neumann algebras. We say that $N \subset M$ has \emph{essentially finite index} if there exists a sequence of projections $p_n \in N' \cap M$ such that $p_n$ tends to $1$ strongly and $Np_n \subset p_n M p_n$ has finite Jones index for all $n$.

\begin{definition}
Let $M, N$ be tracial von Neumann algebras and $A\subset M$, $B \subset N$ von Neumann subalgebras. Let $\bim{M}{\cH}{N}$  be an $M$-$N$-bimodule. We write
\begin{itemize}
\item
$A \prec_{\cH} B$ if $\cH$ contains a non-zero $A$-$B$-subbimodule $\cK \subset \cH$ with $\dim_{\, \text{-}B}(\cK) < \infty$.
\item
$A \prec_{\cH}^f B$ if every non-zero $A$-$N$-subbimodule $\cK \subset \cH$ satisfies $A \prec_{\cK} B$.
\end{itemize}
\end{definition}

Denote by $\tau$ the trace on $M$. Let $\cH$ be an $M$-$N$-bimodule. Using notations from Section \ref{section.fibimodules}, write $\cH \cong \cH(\psi)$ where $\psi$ is a $*$-homomorphism $\psi: M \droite p N^\infty p$ and $p$ a projection in $N^\infty$. Suppose that $\dim_{\, \text{-}N} (\cH) < +\infty$, i.e $(\Tr \ot \tau)(p) < + \infty$. Then, as remarked in \cite{vaes08}, one has
\begin{itemize}
\item
$A\prec_{\cH} B$ if and only if $\psi(A)\prec_N B$,
\item
$A\prec_{\cH}^f B$ if and only if $\psi(A)\prec_N^f B$.
\end{itemize}

\begin{theorem}[{\cite[Theorem 3.11]{vaes08}}] \label{thm.intertw.vaes}
Let $N, M$ be tracial von Neumann algebras, with trace $\tau$. Let $A \subset M$, $B \subset N$ be von Neumann subalgebras. Assume the following.
\begin{itemize}
\item
Every $A$-$A$-subbimodule $\cK \subset \rL^2(M)$ satisfying $\dim_{\, \text{-}A}(\cK) < +\infty$ is included in $\rL^2(A)$.
\item
Every $B$-$B$-subbimodule $\cK \subset \rL^2(N)$ satisfying $\dim_{\, \text{-}B}(\cK) < +\infty$ is included in $\rL^2(B)$.
\end{itemize}
Suppose that $\bim{M}{\cH}{N}$ is a finite index $M$-$N$-bimodule such that $A \prec_{\cH}^f B$ and  $A \succ_{\cH}^f B$. Then there exists a projection $p \in B^\infty$ satisfying $(\Tr \ot \tau)(p) < +\infty$ and a $*$-homomorphism $\psi : M \to p N^\infty p$ such that
\[\bim{M}{\cH}{N} \cong \bim{M}{\cH(\psi)}{N} \qquad , \qquad \psi(A) \subset  p B^\infty p \]
and this last inclusion has essentially finite index.
\end{theorem}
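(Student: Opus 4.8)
The plan is to pin $\cH$ down by realizing it as $\cH(\psi)$ for a suitably chosen $*$-homomorphism and then, exploiting the freedom of replacing $\psi$ by a unitarily conjugate homomorphism (which does not change the bimodule), to push $\psi(A)$ into a corner of $B^\infty$. The two ``full'' hypotheses enter at different points: $A\prec^f_\cH B$ is what makes the move possible, while $A\succ^f_\cH B$ (which I read as: $B$ fully embeds into $A$ through the contragredient bimodule $\overline{\cH}$) is what forces the resulting inclusion $\psi(A)\subseteq pB^\infty p$ to be of essentially finite index; the two hypotheses on subbimodules of $\Ltwo(M)$ and $\Ltwo(N)$ are there to keep the relevant relative commutants small.

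First, by Section \ref{section.fibimodules} write $\bim M\cH N\cong\bim M{\cH(\psi_0)}N$ with $\psi_0 : M\to p_0N^\infty p_0$ a finite--index inclusion and $(\Tr\ot\tau)(p_0)<+\infty$, using that $\cH$ has finite Jones index. By the remarks preceding the statement $A\prec^f_\cH B$ becomes $\psi_0(A)\prec^f_N B$; realizing the contragredient $\bim N{\overline{\cH}}M$ in the same way through a $*$-homomorphism $\chi_0 : N\to\tilde q_0M^\infty\tilde q_0$, the hypothesis $A\succ^f_\cH B$ becomes $\chi_0(B)\prec^f_M A$. The two subbimodule hypotheses also yield $A'\cap M\subseteq A$ and $B'\cap N\subseteq B$, and more generally control relative commutants of the form $A'\cap(pN^\infty p)$ once $\psi_0(M)$ has been moved inside $pN^\infty p$; I keep this in reserve for the last step.

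Next, apply Popa's Theorem \ref{thm:intertwining} to $\psi_0(A)\prec_N B$: there are $m\in\N$, a projection $q\in B^m$, a $*$-homomorphism $\theta : \psi_0(A)\to qB^mq$ and a nonzero partial isometry $v\in p_0\bigl(\M_{1,m}(\CC)\ot N^\infty\bigr)q$ with $\psi_0(a)v=v\,\theta(\psi_0(a))$ for all $a\in A$, where $vv^*\in\psi_0(A)'\cap p_0N^\infty p_0$ and $v^*v\in\theta(\psi_0(A))'\cap qB^mq$. Using fullness, pick by Zorn a maximal family of pairwise orthogonal projections in $\psi_0(A)'\cap p_0N^\infty p_0$ carrying such intertwiners and stack the corresponding partial isometries into a single $V\in N^\infty$; maximality forces $VV^*=p_0$, while $p:=V^*V$ is a projection that lies in $B^\infty$ and commutes with the stacked $*$-homomorphism $\Theta$. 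Then $\psi:=\Ad(V^*)\circ\psi_0 : M\to pN^\infty p$ satisfies $\psi(a)=p\,\Theta(\psi_0(a))\in pB^\infty p$ for $a\in A$, one has $\bim M\cH N\cong\bim M{\cH(\psi)}N$ by the criterion of Section \ref{section.fibimodules} applied to $u=V$, and $\psi$ remains of finite index so that $(\Tr\ot\tau)(p)=\dim_{\, \text{-}N}(\cH)<+\infty$. This already produces $p$ and $\psi$ with $\psi(A)\subseteq pB^\infty p$; only essential finite index of this inclusion is left.

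For that last point I would feed the reverse hypothesis $\chi_0(B)\prec^f_M A$ back through the realization $\cH\cong\cH(\psi)$ just obtained: standard facts relating a finite--index bimodule to its contragredient and to the inclusion $\psi(M)\subseteq pN^\infty p$ turn it into a full embedding in the opposite direction, and running the maximality argument of the previous paragraph symmetrically produces projections $e_n\in\psi(A)'\cap pB^\infty p$ increasing to $p$ with $\psi(A)e_n\subseteq e_n(pB^\infty p)e_n$ of finite Jones index, which is exactly essential finite index. Here the subbimodule hypothesis on $\Ltwo(M)$ is what prevents $\psi(A)$ from sitting inside $pB^\infty p$ with room invisible from the $A$--side, so that full embedding in both directions really does pin down the index. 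The main obstacle is precisely this bookkeeping: setting up the exact dictionary between finite right--dimensionality of $\psi_0(A)$--subbimodules of $\Ltwo(p_0N^\infty p_0)$ and finite--dimensionality of $A$--subbimodules of amplifications of $\cH\otimes_N\overline{\cH}$ (to which the stated hypotheses directly apply), and tracking the amplifications and which relative commutant lives where; once this dictionary is in place, both the corner containment and the index estimate reduce to routine Popa-style maximality arguments.
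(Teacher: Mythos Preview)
The paper does not prove this theorem; it is quoted from \cite[Theorem 3.11]{vaes08} and used as a black box, so there is no in-paper proof to compare against.

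As for your sketch: the overall strategy (realize $\cH$ as $\cH(\psi_0)$, produce intertwiners from $\psi_0(A)\prec^f_N B$, stack them via a maximality argument exploiting fullness, then conjugate) is the right one and is indeed how Vaes's proof proceeds. However, there is a genuine gap at the key step. You assert that Popa's intertwining theorem yields $v^*v\in\theta(\psi_0(A))'\cap qB^mq$, but Theorem \ref{thm:intertwining} only gives $v^*v\in\theta(\psi_0(A))'\cap qN^mq$: the partial isometry $v$ has entries in $N$, not in $B$, and nothing forces $v^*v$ to land in the smaller algebra. If $v_i^*v_i\notin B^{m_i}$ then your stacked projection $p=V^*V$ is not in $B^\infty$, and the formula $\psi(a)=\operatorname{diag}\bigl(\theta_i(\psi_0(a))\,v_i^*v_i\bigr)$ does not place $\psi(A)$ inside $pB^\infty p$.

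This is precisely where the second bulleted hypothesis (on $B$-$B$-subbimodules of $\Ltwo(N)$) must enter, and you never invoke it before the last paragraph. In Vaes's argument that hypothesis is what allows one to arrange the intertwiners so that their final projections lie in $B^\infty$: roughly, any finite-right-$B$-dimensional piece one finds inside $\Ltwo(N)$ already sits over $\Ltwo(B)$, so there is no stray $N$-content in $v^*v$. Likewise, your treatment of the essential-finite-index conclusion is, as you yourself say, only a hope; the ``symmetric maximality argument'' does not by itself produce an increasing sequence of finite-index corners without a further structural input, and this is where the first bulleted hypothesis (on $A$-$A$-subbimodules of $\Ltwo(M)$) does its work. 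So the skeleton is right, but both hypotheses carry real content that your sketch skips.
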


\subsection{Amalgamated free products of tracial von Neumann algebras} \label{sec.amal}
Throughout this section we consider von Neumann algebras $M_0, M_1$ endowed with faithful normal tracial states $\tau_0,\tau_1$. Let $N$ be a common von Neumann subalgebra of $M_0$ and $M_1$ such that the traces $\tau_0$ and $\tau_1$ coincide on $N$. We denote $M = M_0 *_N M_1$ the amalgamated free product of $M_0$ and $M_1$ over $N$ with respect to the trace preserving conditional expectations (see \cite{popa93} and \cite{voiculescudykemanica92}). Recall that $M$ is endowed with a conditional expectation $E: M \droite N$ and the pair $(M,E)$ is unique up to $E$-preserving isomorphism. The von Neumann algebra
$M_0 *_N M_1$ is equipped with a trace defined by $\tau={\tau_0}\circ E = {\tau_1}\circ E $.

\subsubsection{Rigid subalgebras} Kazhdan's property (T) was generalized to tracial von Neumann algebras by Connes and Jones in \cite{connesjones85} and is defined as follows. A II$_1$ factor $M$ has property (T) if and only if there exists $\epsilon >0$ and a finite subset $F\subset M$ such that every $M$-$M$-bimodule that has a unit vector $\xi$ satisfying $\|x \xi - \xi x\| \leq \epsilon$, for all $x \in F$, actually has a non-zero vector $\xi_0$ satisfying $x \xi_0 = \xi_0 x$, for all $x \in M$.

Note that a group $\Gamma$ in which every non-trivial conjugacy class is infinite (ICC group) has property (T) in the sense of Kazhdan if and only if the II$_1$ factor $\rL(\Gamma)$ has property (T) in the sense of Connes and Jones.

Popa defined a notion of relative property (T) for inclusions of tracial von Neumann algebras, see \cite[Definition 4.2]{popa06-betti-numbers}. Such an inclusion is also called \emph{rigid}. In particular, if $N$ is a II$_1$ factor having property (T), then any inclusion $N \subset M$ in a finite von Neumann algebra $M$ is rigid.

We will make use of the following characterization of relative property (T).
\begin{theorem}[See \cite{popa06-betti-numbers} and \cite{petersonpopa05}]
\label{thm:characterization-property-T}
  An inclusion $N \subset M$ of tracial von Neumann algebras is rigid if and only if every sequence $(\psi_n)$ of trace preserving, completely positive, unital maps $\psi_n:M \ra M$ converging to the identity pointwise in $\twonorm$, converges uniformly in $\twonorm$ on the unital ball $(N)_1$ of $N$.
\end{theorem}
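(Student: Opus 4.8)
The plan is to run the standard argument of Popa \cite{popa06-betti-numbers} and Peterson--Popa \cite{petersonpopa05}, built on the correspondence between trace preserving unital completely positive (u.c.p.) maps and pointed bimodules. First I would recall that to a trace preserving u.c.p.\ map $\psi : M \to M$ one associates, by a GNS-type construction, an $M$-$M$-bimodule $\cH_\psi$ with a cyclic unit vector $\xi_\psi$ such that $\langle x\,\xi_\psi\,y, \xi_\psi\rangle = \tau(x\psi(y))$ for all $x,y \in M$; in particular $\xi_\psi$ is \emph{bi-tracial}, i.e.\ $\langle x\xi_\psi,\xi_\psi\rangle = \langle \xi_\psi x, \xi_\psi\rangle = \tau(x)$, and conversely every $M$-$M$-bimodule carrying a cyclic bi-tracial unit vector arises this way. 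I would isolate two elementary facts. Since $\psi$ is u.c.p.\ and trace preserving, $\|\psi(x)\|_2 \le \|x\|_2$ by Kadison--Schwarz, and hence
\[ \|x\xi_\psi - \xi_\psi x\|^2 = 2\|x\|_2^2 - 2\,\Re\,\tau\bigl(\psi(x)x^*\bigr) = \|\psi(x)-x\|_2^2 + \|x\|_2^2 - \|\psi(x)\|_2^2 \ge \|\psi(x)-x\|_2^2 \eqstop \]
Moreover, if $\eta \in \cH_\psi$ is $N$-central then $\|b\xi_\psi - \xi_\psi b\| \le 2\|\xi_\psi - \eta\|$ for every $b$ in the unit ball $(N)_1$, by the triangle inequality and contractivity of the left and right actions.

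For the implication ``rigid $\Rightarrow$ u.c.p.\ condition'', given trace preserving u.c.p.\ maps $\psi_n$ on $M$ with $\psi_n \to \id$ pointwise in $\twonorm$, I would fix $\varepsilon > 0$, take the finite set $F \subset M$ and $\delta > 0$ witnessing rigidity for $\varepsilon$, and note (first fact) that $\|x\xi_{\psi_n} - \xi_{\psi_n}x\| \to 0$ for each $x$, so for large $n$ the bi-tracial unit vector $\xi_{\psi_n}$ is $(F,\delta)$-almost central. Rigidity then gives $N$-central $\eta_n \in \cH_{\psi_n}$ with $\|\eta_n - \xi_{\psi_n}\| < \varepsilon$; the second fact gives $\|b\xi_{\psi_n} - \xi_{\psi_n}b\| < 2\varepsilon$, and the first gives $\|\psi_n(b) - b\|_2 < 2\varepsilon$, for all $b \in (N)_1$ and all large $n$. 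As $\varepsilon$ is arbitrary, $\psi_n \to \id$ uniformly on $(N)_1$.

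For the converse I would argue by contraposition. Assuming $N \subset M$ is not rigid, there is $\varepsilon_0 > 0$ such that for each finite $F$ and $\delta > 0$ there is an $M$-$M$-bimodule with a unit vector that is $\delta$-close to bi-tracial and $\delta$-almost $F$-central, yet lies at distance $\ge \varepsilon_0$ from every $N$-central vector. After reducing these data to pointed bimodules in standard form, I would fix a $\twonorm$-dense sequence $(x_k)$ in $(M)_1$, apply this with $F = \{x_1,\dots,x_n\}$ and $\delta = 1/n$, and obtain trace preserving u.c.p.\ maps $\psi_n : M \to M$ with $\|\psi_n(x_k)-x_k\|_2 < 1/n$ for $k \le n$ and $\xi_{\psi_n}$ at distance $\ge \varepsilon_0$ from every $N$-central vector. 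Each $\psi_n$ being a $\twonorm$-contraction, the first property yields $\psi_n \to \id$ pointwise in $\twonorm$ on $M$. If moreover $\psi_n \to \id$ uniformly on $(N)_1$, then (first fact) $\sup_{b\in(N)_1}\|b\xi_{\psi_n} - \xi_{\psi_n}b\| \to 0$, so for large $n$ the set $K_n = \overline{\mathrm{conv}}\{u\xi_{\psi_n}u^* : u \in \cU(N)\}$ is a nonempty bounded closed convex subset of $\cH_{\psi_n}$, contained in the $\varepsilon_0$-ball about $\xi_{\psi_n}$ and invariant under the isometric action $u\cdot\zeta = u\zeta u^*$; its unique element of minimal norm is fixed by this action, hence is an $N$-central vector within $\varepsilon_0$ of $\xi_{\psi_n}$, a contradiction. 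So $\psi_n \not\to \id$ uniformly on $(N)_1$ and the u.c.p.\ condition fails.

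I expect the main obstacle to be the reduction in the converse direction: turning an arbitrary almost-bi-tracial, almost-central unit vector (all that the failure of rigidity directly provides, and only under a finite set of test elements) into one of the form $\xi_\psi$ for a genuine trace preserving u.c.p.\ map $\psi$ on $M$ --- equivalently, extracting an honest completely positive deformation of the identity; this is the point that is carried out carefully in \cite{popa06-betti-numbers, petersonpopa05}. Everything else is bookkeeping, and the conceptual split is clean: rigidity is precisely what promotes finitely many almost-centrality conditions to a nearby $N$-central vector, whereas the circumcenter argument shows unconditionally that almost-centrality under \emph{all} of $(N)_1$ already forces closeness to an $N$-central vector.
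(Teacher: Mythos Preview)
The paper does not prove this statement at all: Theorem~\ref{thm:characterization-property-T} is quoted as a known result from \cite{popa06-betti-numbers, petersonpopa05} and used as a black box (in the proof of Lemma~\ref{lem:fi-N-bimodule}), so there is no ``paper's own proof'' to compare with. Your sketch is the standard Popa/Peterson--Popa argument via the bimodule--u.c.p.\ correspondence and is correct in outline; you have also correctly isolated the one genuinely nontrivial step, namely the reduction of an almost-bi-tracial almost-central unit vector to an honest cyclic bi-tracial vector $\xi_\psi$ for a trace preserving u.c.p.\ map, which is exactly what \cite{popa06-betti-numbers, petersonpopa05} supply.

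One small point to tidy: your displayed identity for $\|x\xi_\psi - \xi_\psi x\|^2$ is sensitive to the convention chosen for the GNS bimodule $\cH_\psi$. With the convention $\langle a\xi_\psi b, c\xi_\psi d\rangle = \tau(c^*a\,\psi(bd^*))$ one gets the cross term $\tau(x\psi(x^*))$ rather than $\tau(\psi(x)x^*)$, and the two differ unless $\psi$ is symmetric with respect to $\tau$. The inequality $\|\psi(x)-x\|_2 \le \|x\xi_\psi - \xi_\psi x\|$ that you actually use is true in either convention (and is what Popa proves), but your intermediate equality should be rewritten to match whichever convention you fix. This is cosmetic and does not affect the argument.
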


We recall Ioana, Peterson and Popa's Theorem 5.1 from \cite{ioanapetersonpopa08} which controls the position of rigid subalgebras of amalgamated free product von Neumann algebras. We choose to work with matrices over amalgamated free products, which is not a more general situation, since $(M_0 *_N M_1)^n$ can be identified with $M_0^n *_{N^n} M_1^n$.

\begin{theorem}[See {\cite[Theorem 4.3]{ioanapetersonpopa08}}] \label{IPP1}
Let $M = M_0 *_N M_1$. Let $p \in M^n$ be a projection and $Q \subset p M^n p$ a rigid inclusion. Then there exists $i \in \{0,1\}$ such that $Q \prec_M M_i$.
\end{theorem}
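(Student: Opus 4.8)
Here is how I would approach this (it is the von Neumann algebra counterpart of the classical fact that a property-(T) subgroup of an amalgamated free product $G_0 *_H G_1$ fixes a vertex of the Bass--Serre tree, hence is conjugate into $G_0$ or $G_1$). The plan is to run Popa's \emph{deformation versus rigidity} scheme, the deformation playing the role of the tree action. Since $(M_0 *_N M_1)^n = M_0^n *_{N^n} M_1^n$, we may replace $M_0, M_1, N$ by $M_0^n, M_1^n, N^n$ and assume $n = 1$, keeping only the projection $p$. Equip $M = M_0 *_N M_1$ with the \emph{word-length malleable deformation}: adjoin a free copy of $\rL(\ZZ)$ over $N$ to each building block, $\wt M_i := M_i *_N (N \vNtensor \rL(\ZZ))$ with new Haar unitary $u_i = \exp(i h_i)$, $h_i = h_i^*$ commuting with $N$, and set $\wt M := \wt M_0 *_N \wt M_1 \supseteq M$, $\theta_t := \Ad(\exp(i t h_0)) *_N \Ad(\exp(i t h_1)) \in \Aut(\wt M)$. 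These automorphisms are trace preserving, tend to $\id$ pointwise in $\twonorm$, and form an $s$-malleable deformation via the period-two symmetry $\be$ with $\be|_M = \id$ and $\be(u_i) = u_i^*$. A direct free-product computation identifies the associated completely positive maps with the word-length maps $\psi_t := \rE_M \circ \theta_t|_M = \sum_{k \ge 0} \rho(t)^k P_k$, where $P_k$ is the orthogonal projection of $\Ltwo(M)$ onto the closed span of reduced words of length $k$ and $0 < \rho(t) < 1$ with $\rho(t) \to 1$ as $t \to 0$.

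Next I would feed in rigidity. The $\psi_t$ are trace preserving, unital and completely positive and converge to $\id$ pointwise in $\twonorm$; Theorem~\ref{thm:characterization-property-T} applied to the rigid inclusion $Q \subset pMp$ (with the routine compression bookkeeping relating completely positive maps on $M$ and on $pMp$, using that $\psi_t(p)$ is invertible in $pMp$ for $t$ small) forces $\psi_t \to \id$ \emph{uniformly} on the unit ball $(Q)_1$. Since $\psi_t = \sum_k \rho(t)^k P_k$ and $\|\theta_t(x) - x\|_2^2 = 2 \Re\, \tau\bigl(x^*(x - \psi_t(x))\bigr)$, this says that the deformation itself is uniformly small on $(Q)_1$; combining this with Popa's transversality inequality for the $s$-malleable deformation $(\theta_t, \be)$ to remove the dependence of the scale on the error, one obtains that for every $\delta > 0$ there is $K$ such that every unitary of $Q$ lies within $\twonorm$-distance $\delta$ of the linear span of the reduced words of length $\le K$.

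Finally, the intertwining step. Suppose, for contradiction, that $Q \not\prec_M M_0$ and $Q \not\prec_M M_1$. The aim is to show that unitaries of $Q$ then necessarily carry, uniformly over $(Q)_1$, a non-negligible amount of $\twonorm$-mass on reduced words of arbitrarily large length, contradicting the previous paragraph. This is the step where the amalgamated-free-product structure, rather than a general malleable deformation, is essential: by Popa's intertwining characterization (Theorem~\ref{thm:intertwining}), $Q \not\prec_M M_i$ produces, for any prescribed finite subset of $M$ and any tolerance, a unitary of $Q$ whose $\rE_{M_i}$-mass against those test elements is tiny, and a combinatorial analysis of reduced words in $M_0 *_N M_1$ upgrades ``escaping both $M_0$ and $M_1$'' to ``escaping all bounded word-lengths''. (Equivalently, one extracts from the almost-$Q$-centrality of the far part of $\theta_{t}$ a non-zero $Q$-$M_i$-subbimodule of $\Ltwo(M)$ of finite right dimension.) Either way we obtain $i \in \{0,1\}$ with $Q \prec_M M_i$, and unwinding the identification $M = M_0^n *_{N^n} M_1^n$ gives the statement.

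The main obstacle is this last step: converting the uniform word-length bound on $(Q)_1$ into a genuine embedding $Q \prec_M M_i$. Concretely one must (a) analyse reduced words in the amalgamated free product finely enough to see that a subalgebra escaping both $M_0$ and $M_1$ cannot be uniformly concentrated on words of bounded length, and (b) exploit $s$-malleability (transversality) to pass from the error-dependent estimate supplied by rigidity to a scale-independent one, so that (a) can be invoked. A related point is that the deformation bimodule $\Ltwo(\wt M) \ominus \Ltwo(M)$ is not literally coarse relative to the $M_i$, so one cannot simply quote a black-box ``deformation $\Rightarrow$ intertwining'' lemma and must argue by hand as in \cite{ioanapetersonpopa08}. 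One also uses throughout that $N \subsetneq M_0$ and $N \subsetneq M_1$, without which the deformation is trivial and there is nothing to separate.
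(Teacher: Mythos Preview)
The paper does not supply a proof of this statement: it is quoted as a preliminary result from \cite[Theorem~4.3]{ioanapetersonpopa08} and used as a black box in Section~\ref{section.thmC}. There is therefore nothing in the paper to compare your proposal against.

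That said, your sketch is precisely the Ioana--Peterson--Popa argument. The enlarged algebra $\wt M = \wt M_0 *_N \wt M_1$ you build is canonically isomorphic to $M *_N (N \vnt \rL(\F_2))$, and your $\theta_t$ is their free malleable deformation. The identification $\rE_M \circ \theta_t|_M = \sum_{k \ge 0} \rho(t)^k P_k$ with $\rho(t) = |\tau(u^t)|^2$ is correct: for a reduced word $x = x_1\cdots x_k$ one writes each interior factor $u_{i_j}^{-t}u_{i_{j+1}}^t = |\tau(u^t)|^2 + w_j$ with $w_j \in (N \vnt \rL(\F_2)) \ominus N$, expands, and observes that in the free product $M *_N (N \vnt \rL(\F_2))$ only the pure-scalar term survives $\rE_M$. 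Your diagnosis that the final step---turning uniform word-length concentration on $(Q)_1$ into an actual embedding $Q \prec_M M_i$---is the substantive one is accurate; in \cite{ioanapetersonpopa08} this is done by a direct combinatorial analysis of reduced words rather than by invoking a packaged transversality lemma (transversality in its now-standard form postdates that paper), but the mechanism you describe is the right one. The remark about $N \subsetneq M_i$ is harmless but unnecessary: if $N = M_i$ then $M = M_{1-i}$ and the conclusion is trivial.
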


\subsubsection{Control of quasi-normalizers} Let $M$ be a tracial von Neumann algebra and $N \subset M$ a von Neumann subalgebra. The \emph{quasi-normalizer} of $N$ inside $M$, denoted $\QN_M(N)$, is defined as the set of elements $a \in M$ for which there exist  $a_1,\dots, a_n, b_1,\dots,b_m \in M$ such that
\[ Na \subset \sum_{i=1}^n a_i N , \qquad , \qquad aN \subset \sum_{i=1}^m N b_i \; \text{.} \]
The inclusion $N \subset M$ is called \emph{quasi-regular} if $\QN_M(N)'' = M$. One also defines the \emph{group of normalizing unitaries} $\norm_N(M)$ of $N\subset M$ as the set of unitaries $u \in M$ satisfying $u N u^* = N$. The \emph{normalizer} of $N$ in $M$ is $\norm_M(N)''$. Note that $ N' \cap M  \subset \norm_M(N)'' \subset \QN_M(N)''$.


\begin{theorem}[See {\cite[Theorem 1.1]{ioanapetersonpopa08}}] \label{IPP2}
Let $M = M_0 *_N M_1$. Let $p \in M_0^n$ be a projection and $Q \subset p M_0^n p$ a von Neumann subalgebra satisfying $Q \not\prec_{M_0} N$. Whenever $\cK \subset p(\CC^n \ot \rL^2(M))$ is a $Q$-$M_0$-subbimodule with $\dim_{\, \text{-}M_0}(\cK) < +\infty$, we have $\cK \subset p(\CC^n \ot \rL^2(M_0))$. In particular, the quasi-normalizer of $Q$ inside $p M^n p$ is contained in $p M_0^n p$.
\end{theorem}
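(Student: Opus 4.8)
The plan is to derive this statement from Theorem \ref{IPP1} (the ``rigid subalgebras'' theorem) is not directly available here, so instead I would proceed by a direct analysis of the free product structure, exactly in the spirit of Ioana--Peterson--Popa. First I would reduce to the unital case $n = 1$ and $p = 1$: since $(M_0 *_N M_1)^n = M_0^n *_{N^n} M_1^n$ and amplifications do not affect the hypotheses, it suffices to treat a von Neumann subalgebra $Q \subset M_0$ with $Q \not\prec_{M_0} N$ and show that any $Q$-$M_0$-subbimodule $\cK \subset \rL^2(M)$ of finite right $M_0$-dimension is contained in $\rL^2(M_0)$. The ``in particular'' clause then follows: if $a \in \QN_{pM^np}(Q)$, then $\overline{\mathrm{span}}\, QaM_0$ has finite right $M_0$-dimension (by the quasi-normalizer relations and the fact that $M_0 \subset M$ has trivial relative commutant issues handled by counting), hence lies in $\rL^2(M_0)$, forcing $a \in M_0^n$.

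The heart of the argument is the structure of $\rL^2(M)$ as an $M_0$-$M_0$-bimodule. Decompose $\rL^2(M) = \rL^2(M_0) \oplus \cH^\circ$, where $\cH^\circ$ is the closed span of reduced words of length $\geq 1$ starting with a letter from $M_1 \ominus N$. A standard computation (Voiculescu's freeness, as in \cite{ioanapetersonpopa08}) shows that $\cH^\circ$, as an $M_0$-$N$-bimodule (restricting the right action to $N$), is a direct sum of copies of the coarse bimodule $\rL^2(M_0) \otimes_N \rL^2(M_0)$ tensored with bimodules over $N$; the key point is that the right $M_0$-action on $\cH^\circ$ is ``$N$-coarse'' in the sense that it factors through the inclusion $N \subset M_0$ only on a piece that is already accounted for. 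Then I would argue: if $\cK \subset \rL^2(M)$ is a $Q$-$M_0$-subbimodule with $\dim_{\text{-}M_0}(\cK) < \infty$ and $\cK$ has nonzero projection onto $\cH^\circ$, this projection $\cK^\circ$ is a $Q$-$M_0$-subbimodule of $\cH^\circ$ of finite right dimension; restricting the right action to $N$ and using the coarseness, $\cK^\circ$ would produce a $Q$-$N$-subbimodule of $\rL^2(M_0) \otimes_N (\text{something})$ of finite right $N$-dimension, which by Popa's intertwining characterization (Theorem \ref{thm:intertwining}) yields $Q \prec_{M_0} N$, contradicting the hypothesis. Hence $\cK \subset \rL^2(M_0)$.

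The step I expect to be the main obstacle is making precise the ``$N$-coarseness'' of $\cH^\circ$ as a right $M_0$-module and extracting from a finite-right-$M_0$-dimension subbimodule a genuine finite-right-$N$-dimension intertwiner witnessing $Q \prec_{M_0} N$. This requires care because finite right $M_0$-dimension does not obviously give finite right $N$-dimension after restriction (the index $[M_0 : N]$ could be infinite). The resolution, following \cite{ioanapetersonpopa08}, is to work with the orthogonal decomposition of $\cH^\circ$ into the length-graded pieces and observe that each reduced word, read as an element of $\rL^2(M_0) \otimes_N \rL^2(M_0) \otimes_N \cdots$, has its \emph{last} tensor leg carrying the right $M_0$-action freely over $N$; a finite-dimensional right $M_0$-submodule then has finite-dimensional image in that last leg over $N$ up to a bounded ambiguity, which is exactly the data of an intertwiner $Q \to p N^m p$. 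I would then invoke the third equivalent condition in Theorem \ref{thm:intertwining} (the asymptotic-vanishing criterion) to package this cleanly. The quasi-normalizer conclusion is then immediate, and since the statement is quoted verbatim from \cite{ioanapetersonpopa08}, the honest proof is simply a citation, but the above is the structure one would reconstruct.
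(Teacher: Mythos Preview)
The paper does not prove this theorem; it is stated with a citation to \cite{ioanapetersonpopa08} and used as a black box. So there is no ``paper's own proof'' to compare against beyond the reference, and you correctly note at the end that the honest proof here is simply the citation.

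As for your reconstruction of the IPP argument: the overall strategy (free-product decomposition of $\rL^2(M)$ as an $M_0$-$M_0$-bimodule, combined with Popa's intertwining criterion) is correct, and you rightly flag the passage from finite right $M_0$-dimension to an intertwining $Q \prec_{M_0} N$ as the crux. Two points deserve sharpening. First, your description of $\cH^\circ$ and the word ``coarse'' are imprecise: the orthogonal complement $\rL^2(M) \ominus \rL^2(M_0)$ consists of all reduced words containing at least one $M_1 \ominus N$ letter (not only those starting in $M_1 \ominus N$), and as an $M_0$-$M_0$-bimodule it decomposes as a direct sum of pieces of the form $\rL^2(M_0) \otimes_N \cL \otimes_N \rL^2(M_0)$ for various $N$-$N$-bimodules $\cL$ built from $(\rL^2(M_1) \ominus \rL^2(N))^{\otimes_N k}$; this is not the coarse bimodule. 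Second, and more substantively, your resolution of the obstacle via ``finite-dimensional image in the last leg over $N$'' is too vague and does not obviously go through as stated, because projecting onto a tensor leg need not respect the right $N$-action. The actual IPP proof avoids this dimension-transfer issue altogether by working directly with the asymptotic criterion in Theorem~\ref{thm:intertwining}: take a sequence $u_k \in \cU(Q)$ with $\|E_N(x u_k y)\|_2 \to 0$ for all $x,y \in M_0$, and show by an explicit reduced-word computation that $\|E_{M_0}(\eta^* u_k \xi)\|_2 \to 0$ for all $\xi, \eta \in M \ominus M_0$. This forces the $M_0$-valued inner products $\langle u_k \xi_j, \xi_l \rangle_{M_0}$ of any right $M_0$-basis $\{\xi_j\}$ of a putative $\cK^\circ \subset \cH^\circ$ to tend to zero, contradicting unitarity of the induced $Q$-representation in $M_0^m$. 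This is cleaner than the module-theoretic route you sketch, and is what one would cite.
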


\subsection{Tensor \Cstar-categories, fusion algebras and bimodule categories of II$_1$ factors}
\label{sec.fusion}

We briefly recall some definitions for tensor \Cstar-categories and refer to \cite{longoroberts97, schaflitzel97} for more information and precise statements. A tensor \Cstar-category is a \Cstar-category with a \emph{monoidal structure}, such that all structure maps are unitary. A tensor \Cstar-category is called \emph{regular} if it has subobjects and direct sums and the unit object is strongly irreducible. A regular tensor \Cstar-category is called \emph{compact} if every object has a conjugate. A compact tensor \Cstar-category is \emph{finite} if it has only finitely many isomorphism classes of simple objects.

{\bf Convention.} Throughout this article we assume without loss of generality that all tensor categories involved are strict.

\subsubsection{Fusion algebras}
A fusion algebra $\cA$ is a free $\N$-module $\N[\cG]$ equipped with
\begin{itemize}
\item an associative and distributive product operation, and a multiplicative unit element $e \in \cG$,
\item an additive, anti-multiplicative, involutive map $x \mapsto \overline{x}$, called \emph{conjugation},
\end{itemize}
satisfying Frobenius reciprocity as follows. For $x,y,z \in \cG$, define $m(x,y;z) \in \N$ such that $$x y = \sum_z m(x,y;z) z\;.$$ Then,
one has $m(x,y;z) = m(\ox,z;y) = m(z,\overline{y}; x)$ for all $x,y,z \in \cG$.

The base $\cG$ of the fusion algebra $\cA$, also called the \emph{irreducible elements} of $\cA$, consists of the non-zero elements of $\cA$ that cannot be expressed as the sum of two non-zero elements.

We have the following examples of fusion algebras.
\begin{itemize}
  \item Given a countable group $\Gamma$, one gets the associated fusion algebra $\cA = \N[\Gamma]$.
  \item Let $G$ be a locally compact group and define the fusion algebra $\cA$ of $\Repfin(G)$ as the set of equivalence classes of finite dimensional unitary representations of $G$. The direct sum and tensor product of representations in $\Repfin(G)$ yield a fusion algebra structure on $\cA$.
  \item More generally, the isomorphism classes of objects in a compact tensor \Cstar-category form a fusion algebra. Note that there exist non-equivalent tensor \Cstar-categories having isomorphic fusion algebras.
\end{itemize}

In this article we are mainly interested in tensor \Cstar-categories and fusion algebras coming from bimodules over II$_1$ factors. We recall some definitions and refer to \cite{bisch97} for background material and results on bimodules and fusion algebras, in particular in relation with subfactors.

\subsubsection{The bimodule category of a II$_1$ factor}

Let  $M$, $N$, $P$ be II$_1$ factors. We denote by $\cH \ot_N \cK$ the Connes tensor product of the $M$-$N$-bimodule $\cH$ and the $N$-$P$-bimodule $\cK$ and refer to \cite[V.Appendix B]{connes94} for details. Note that $\cH(\rho) \ot_N \cH(\psi) \cong \cH((\id \ot \psi)\rho)$.

We recall now the following useful lemma from \cite{falguieresvaes10} concerning Connes tensor product versus product in a given module. The inclusion of II$_1$ factors $N \subset M$ considered in \cite{falguieresvaes10} is assumed to be irreducible ($N' \cap M = \C1$). Instead, we assume that $N \subset M$ is quasi-regular. We give a proof for the convenience of the reader.

\begin{lemma}[{\cite[Lemma 2.2]{falguieresvaes10}}]\label{lemm.connes_tensor}
Let $\wt{N} \subset N \subset M$ be an inclusion of II$_1$ factors and let $P$ be a II$_1$ factor. Assume that $N \subset M$ is quasi-regular and $\wt{N} \subset N$ has finite index. Let $\bim{M}{\cH}{P}$ be an $M$-$P$-bimodule. Suppose that $\cL \subset \cH$ is a closed $\wt{N}$-$P$-subbimodule. Suppose that $\cK \subset \rL^2(M)$ is an $N$-$\wt{N}$-subbimodule of finite index.
Denote by $\cK \cdot \cL$ the closure of $(\cK \cap M) \cL$ inside $\cH$. Then
\begin{itemize}
\item
$\cK \cdot \cL$ is an $N$-$P$-bimodule isomorphic to a subbimodule of $\cK \ot_{\wt{N}} \cL$.
\item
If $\cK \cdot \cL$ is non-zero and $\cK \ot_{\wt{N}} \cL$ is irreducible then, $\cK \cdot \cL$ and $\cK \ot_{\wt{N}} \cL$ are isomorphic $N$-$P$-bimodules.
\end{itemize}

Whenever $\bim{P}{\cH}{M}$ is a $P$-$M$-bimodule with closed $P$-$\wt{N}$-subbimodule $\cL$ and $\cK \subset \Ltwo(M)$ an $\wt{N}$-$N$-subbimodule, we define $\cL \cdot \cK$ as the closure of $\cL (\cK \cap M)$ inside $\cH$ and, by symmetry, we find that $\cL \cdot \cK$ is isomorphic with a $P$-$N$-subbimodule of $\cL \ot_N \cK$.
\end{lemma}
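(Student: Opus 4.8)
The plan is to realise $\cK\cdot\cL$ as the closed range of a bounded $N$-$P$-bimodular map $\Phi\colon\cK\ot_{\wt{N}}\cL\to\cH$, and then to read off both statements from the polar decomposition of $\Phi$; the assertion about $P$-$M$-bimodules will follow by passing to contragredient bimodules.

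The first and main step is to construct $\Phi$. The key input --- and the place where quasi-regularity of $N\subset M$ is used, in place of the irreducibility hypothesis of \cite{falguieresvaes10} --- is the following structural fact. Since $N\subset M$ is quasi-regular and $\wt{N}\subset N$ has finite index, the inclusion $\wt{N}\subset M$ is again quasi-regular, whence the finite-index $N$-$\wt{N}$-subbimodule $\cK\subseteq\rL^2(M)$ has $\cK\cap\QN_M(\wt{N})\subseteq\cK\cap M$ $\|\cdot\|_2$-dense in $\cK$; moreover $\cK$ admits a finite Pimsner--Popa basis $x_1,\dots,x_k$ over $\wt{N}$ with all $x_j\in\cK\cap M$, which one may take orthonormal: $\rE_{\wt{N}}(x_i^{*}x_j)=\delta_{ij}f_j$ for projections $f_j\in\wt{N}$, so that $\cK\ot_{\wt{N}}\cL\cong\bigoplus_{j=1}^{k}f_j\cL$. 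The $\wt{N}$-balanced bilinear map $(\cK\cap M)\times\cL\to\cH$, $(x,\eta)\mapsto x\eta$, descends to the algebraic tensor product over $\wt{N}$, and the estimate $\|\sum_{j}x_j\eta_j\|^{2}\le\|\sum_{j}x_jx_j^{*}\|\cdot\sum_{j}\|\eta_j\|^{2}$ (valid since $k<\infty$ and $\sum_j x_jx_j^{*}\in M$) shows that it extends to a bounded map $\Phi\colon\cK\ot_{\wt{N}}\cL\to\cH$ with $\Phi(\widehat x\ot_{\wt{N}}\eta)=x\eta$ for $x\in\cK\cap M$, $\eta\in\cL$. This $\Phi$ is $N$-$P$-bimodular --- on the right this is clear, and on the left $n\cdot(x\eta)=(nx)\eta$ with $nx\in\cK\cap M$ since $\cK$ is left $N$-invariant --- and $\overline{\operatorname{im}\Phi}=\overline{(\cK\cap M)\cL}=\cK\cdot\cL$, because the $x_j$ span $\cK$ over $\wt{N}$; in particular $\cK\cdot\cL$ is an $N$-$P$-bimodule.

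It remains to take the polar decomposition $\Phi=u\,|\Phi|$. Since $\Phi$, hence also $\Phi^{*}\Phi$, $|\Phi|$ and $u$, are $N$-$P$-bimodular, $\ker\Phi$ and $(\ker\Phi)^{\perp}$ are $N$-$P$-subbimodules of $\cK\ot_{\wt{N}}\cL$, and $u$ restricts to a unitary $N$-$P$-bimodule isomorphism from $(\ker\Phi)^{\perp}$ onto $\overline{\operatorname{im}\Phi}=\cK\cdot\cL$; this proves the first bullet. If moreover $\cK\ot_{\wt{N}}\cL$ is irreducible and $\cK\cdot\cL\neq 0$, then $\Phi\neq 0$, so $\ker\Phi$ is a proper $N$-$P$-subbimodule of an irreducible object and therefore $\ker\Phi=0$; then $u$ is a unitary isomorphism $\cK\ot_{\wt{N}}\cL\cong\cK\cdot\cL$, which is the second bullet. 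The statement for a $P$-$M$-bimodule $\bim{P}{\cH}{M}$ with $P$-$\wt{N}$-subbimodule $\cL$ and $\wt{N}$-$N$-subbimodule $\cK\subseteq\rL^2(M)$ follows by applying everything above to the contragredients $\bim{M}{\overline{\cH}}{P}$, $\overline{\cL}$ and $\overline{\cK}$.

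The step I expect to be the main obstacle is the structural fact invoked in the second paragraph: one must know that the finite-index bimodule $\cK$ admits a Pimsner--Popa basis consisting of vectors that genuinely lie in $M$ (equivalently in $\QN_M(\wt{N})$), and this is precisely where the quasi-regularity of $N\subset M$ enters, via the description of finite-index subbimodules of $\rL^2(M)$ in terms of the quasi-normalizer.
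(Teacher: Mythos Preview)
Your proposal is correct and follows essentially the same route as the paper's proof: both arguments use quasi-regularity to get that $\cK\cap M$ is $\|\cdot\|_2$-dense in $\cK$, pick a finite right $\wt{N}$-basis for $\cK$ consisting of elements of $M$, define the multiplication map $\cK\ot_{\wt{N}}\cL\to\cH$ via this basis, and then take the polar decomposition. The only difference is terminological---you phrase the basis step in Pimsner--Popa language, while the paper writes $\cK\cong\cH(\psi)$ for a finite index inclusion $\psi:N\to p\wt{N}^n p$ and takes the images $T(p(e_i\otimes 1))\in\cK\cap M$ as the basis; these are the same data.
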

\begin{proof}
  Let $\cH, \cK$ and $\cL$ be as in the statement of the lemma.  Note that $\cK \cap M$ is dense in $\cK$, since $N \subset M$ is quasi-regular and $\wt{N} \subset N$ has finite index. Moreover, all vectors in $\cK \cap M$ are $\wt{N}$-bounded. So, there exists a finite index inclusion $\psi:N \ra p\wt{N}^n p$ and an $N$-$\wt{N}$-bimodular isomorphism $T:\cH(\psi) = p(\CC^n \otimes \Ltwo(\wt{N})) \ra \cK$ such that $T(p(e_i \otimes 1)) \in \cK \cap M$ for all $i$. We have $\cK \otimes_{\wt{N}} \cL \cong p(\CC^n \otimes \cL)$, hence we can define an $N$-$P$-bimodular map $S:p(\CC^n \otimes \cL) \ra \cK \cdot \cL$ by $S(p(e_i \otimes \xi)) = T(p(e_i \otimes 1)) \cdot \xi$. The range of $T$ is dense in $\cK \cdot \cL$. After taking the polar decomposition of $T$ we get a coisometry $\cK \otimes_N \cL \ra \cK \cdot \cL$.
\end{proof}

The \emph{contragredient} of an $M$-$N$-bimodule $\bim{M}{\cH}{N}$ is the $N$-$M$-bimodule defined on the conjugate Hilbert space $\overline{\cH}$ with bimodule actions given by $a \cdot \overline{\xi} = \overline{(\xi a^*)}$ and $\overline{\xi} \cdot b = \overline{(b^* \xi)}$.

The Connes tensor product and contragredience induce a compact tensor \Cstar-category structure on the category of finite index $M$-$M$ bimodules, where morphisms are given by bimodular maps.

\begin{definition}
Let $M$ be a II$_1$ factor. We define $\Bimod(M)$ to be the tensor \Cstar-category of finite index $M$-$M$-bimodules and  $\FAlg(M)$ the associated fusion algebra.
\end{definition}

We recall the notion of pairs of conjugates in strict tensor \Cstar-categories.

\begin{definition}[See \cite{longoroberts97}]
Let $x$ be an object in a strict tensor \Cstar-category $\cC$. A conjugate for $x$ is an object $\ol{x}$ in $\cC$ and morphisms $R: 1_\cC \ra \ol{x} \otimes x$, $\ol{R}: 1_\cC \ra x \otimes \ol{x}$ such that
\[(\ol{R}^* \otimes \id_x) \circ (\id_x \otimes R) = \id_x
   \quad \text{and} \quad
   (R^* \otimes \id_{\ol{x}}) \circ (\id_{\ol{x}} \otimes \ol{R}) = \id_{\ol{x}} \eqstop\]
\end{definition}

In the following theorem, pairs of conjugates are used to characterize finite index bimodules among all bimodules over a II$_1$ factor (see \cite{longoroberts97} and also \cite[Theorem 5.32]{falguieres09}).

\begin{theorem} \label{thm.caracterisatio-bimod}
Let $M$ be a II$_1$ factor and let $\bim{M}{\cH}{M}$ an $M$-$M$-bimodule. Then $\bim{M}{\cH}{M}$ has finite index if and only if $\bim{M}{\cH}{M}$ has a conjugate in the tensor \Cstar-category of all $M$-$M$-bimodules.
\end{theorem}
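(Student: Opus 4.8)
The plan is to prove the two implications separately, using the contragredient $\ol{\cH}$ as the candidate conjugate of $\bim{M}{\cH}{M}$.

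\emph{From finite index to a conjugate.} By Section~\ref{section.fibimodules} I may assume $\bim{M}{\cH}{M}\cong\bim{M}{\cH(\psi)}{M}$ with $\psi:M\ra pM^np$ a finite index inclusion, and I write $E:pM^np\ra\psi(M)$ for the $\tau$-preserving conditional expectation. Finiteness of the index yields a \emph{finite} Pimsner--Popa basis $\{b_1,\dots,b_k\}\subset pM^np$, i.e.\ $\sum_l b_l\,E(b_l^*x)=x$ for all $x\in pM^np$. Viewing the $b_l$ as bounded vectors of $\cH(\psi)$ and of its contragredient, I would set
\[ R(\hat1):=\sum_l\ol{\hat b_l}\otimes_M\hat b_l\in\ol{\cH}\otimes_M\cH \qquad\text{and}\qquad \ol R(\hat1):=\sum_l\hat b_l\otimes_M\ol{\hat b_l}\in\cH\otimes_M\ol{\cH} \eqcomma \]
extended by $M$-bimodularity, and then verify the conjugate equations $(\ol R^*\otimes\id_{\cH})(\id_{\cH}\otimes R)=\id_{\cH}$ and $(R^*\otimes\id_{\ol{\cH}})(\id_{\ol{\cH}}\otimes\ol R)=\id_{\ol{\cH}}$. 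After unravelling the Connes tensor products these reduce to the Pimsner--Popa reconstruction identity and trace-preservation of $E$; this is the standard computation showing that the bimodules attached to a finite index inclusion and its basic construction are mutually conjugate, see \cite{bisch97}. Hence $\ol{\cH}$ is a conjugate of $\cH$.

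\emph{From a conjugate to finite index.} Suppose $(\ol{\cH},R,\ol R)$ is a conjugate. Since $R^*R$ and $\ol R^*\ol R$ are endomorphisms of the unit object $\rL^2(M)$ of the tensor \Cstar-category of all $M$-$M$-bimodules, and since a bimodular endomorphism of $\rL^2(M)$ is multiplication by a central element of $M$, hence (as $M$ is a factor) a scalar, one gets $R^*R=c$ and $\ol R^*\ol R=\ol c$ for some $c,\ol c>0$, with $c\ol c\geq1$ by the zig-zag identities and $c\ol c<\infty$ trivially. The remaining task is to turn finiteness of these numbers into $\dim_{M\text{-}}(\cH)<\infty$ and $\dim_{\,\text{-}M}(\cH)<\infty$. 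I would do this by expanding $\ol R(\hat1)\in\cH\otimes_M\ol{\cH}$ along a Pimsner--Popa right $M$-basis $(\xi_i)$ of $\cH$, writing $\ol R(\hat1)=\sum_i\xi_i\otimes_M\zeta_i$ with $\sum_i\|\zeta_i\|^2=\ol c<\infty$, and then using $M$-centrality of $\ol R(\hat1)$ together with the conjugate equation $(\ol R^*\otimes\id_{\cH})(\id_{\cH}\otimes R)=\id_{\cH}$ to recover every $\xi_i$ from finitely many of the $\zeta_j$ and from $R(\hat1)$; this forces the basis to be finite, so $\dim_{\,\text{-}M}(\cH)<\infty$. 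Applying the same argument to $\ol{\cH}$, whose conjugate is $\cH$, gives $\dim_{M\text{-}}(\cH)=\dim_{\,\text{-}M}(\ol{\cH})<\infty$. Equivalently, one may simply invoke the finiteness part of the general theory of conjugates in compact tensor \Cstar-categories, \cite{longoroberts97} and \cite[Theorem 5.32]{falguieres09}.

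\emph{Main obstacle.} The construction of $R$ and $\ol R$ is entirely explicit, so the delicate direction is the converse: extracting finite \emph{one-sided} von Neumann dimensions from the purely categorical datum of a conjugate. The point requiring care is that finite generation of a module over a II$_1$ factor interacts non-trivially with the Connes tensor product, so the computation with bases inside $\cH\otimes_M\ol{\cH}$ has to be carried out carefully; the quickest way to discharge it is to cite \cite{longoroberts97} or \cite[Theorem 5.32]{falguieres09}.
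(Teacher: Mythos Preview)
The paper does not prove this theorem at all: it is stated with the parenthetical ``see \cite{longoroberts97} and also \cite[Theorem 5.32]{falguieres09}'' and no argument is given. So there is no ``paper's own proof'' to compare against; the question is only whether your sketch stands on its own.

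Your forward direction (finite index $\Rightarrow$ conjugate) is fine and standard: a finite Pimsner--Popa basis for $\psi(M)\subset pM^np$ gives the solutions $R,\ol R$ to the conjugate equations, and the verification is exactly the basis reconstruction identity.

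The backward direction, however, is not a proof as written. First, a notational issue: you write ``suppose $(\ol{\cH},R,\ol R)$ is a conjugate'', but $\ol{\cH}$ has already been fixed in your text as the contragredient; a priori the hypothesis only gives \emph{some} conjugate object $\cK$, and identifying $\cK$ with the contragredient is part of what has to be shown (or is irrelevant and should be avoided). More seriously, the core step ``using $M$-centrality of $\ol R(\hat 1)$ together with the conjugate equation \ldots\ to recover every $\xi_i$ from finitely many of the $\zeta_j$ and from $R(\hat 1)$; this forces the basis to be finite'' is an assertion, not an argument. Knowing $\sum_i\|\zeta_i\|^2<\infty$ does not by itself force a Pimsner--Popa basis $(\xi_i)$ of $\cH$ to be finite, and you do not explain what ``recover $\xi_i$ from finitely many $\zeta_j$'' means or why it yields $\dim_{\text{-}M}(\cH)<\infty$. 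You implicitly concede this by immediately offering the alternative of ``simply invoking'' \cite{longoroberts97} or \cite[Theorem 5.32]{falguieres09} --- which is precisely what the paper does. So for the nontrivial implication your proposal reduces to the same citation the paper makes; the sketch preceding it does not constitute an independent proof.
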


\subsubsection{Tensor \Cstar-categories arising from subfactors} \label{sec.Bimod_subfactor}
Let $M$ be a II$_1$ factor and $N \subset M$ a subfactor. Write $e_N$ for the projection $\rL^2(M) \to \rL^2(N)$. The von Neumann algebra $\psg M, e_N \psd \subset \B( \rL^2(M))$ generated by $M$ and $e_N$, called the \emph{Jones basic construction}, was introduced in \cite{jones83} and is denoted $M_1$. Note that $\rL^2(M_1)$ is an $M$-$M$-bimodule and it is of finite Jones index whenever $[M:N] < + \infty$. We will frequently use the fact that $\dimvec (N' \cap M) < + \infty$ if $[M:N] < + \infty$.

\begin{definition}
Let $N \subset M$ be an inclusion of type II$_1$ factors. We define $\Bimod(N \subset M)$ to be the tensor \Cstar-subcategory of $\Bimod(N)$ generated by all finite index $N$-$N$-bimodules that appear in $\eL^2(M)$. We denote by $\FAlg(N \subset M)$ the associated fusion subalgebra of $\FAlg(N \subset M)$.
\end{definition}

We give the following definition of depth 2, as in \cite{enockvallin00}.

\begin{definition}
   Let $N \subset Q$ be an inclusion of II$_1$ factors.  Let $N \subset Q \subset Q_1 \subset Q_2 \subset \dotsm$ be the Jones tower.  Then $N \subset Q$ has depth 2 if $N' \cap Q \subset N' \cap Q_1 \subset N' \cap Q_2$ is a basic construction.
\end{definition}

Identify $N' \cap Q_2$ with the space of bounded $N$-$Q$-bimodular maps $\rB_{N \text{-} Q}(\Ltwo(Q_1, \Tr))$. Denote by $\Hom_{N\text{-}Q} (\Ltwo(Q), \Ltwo(Q_1))$ the Hilbert space completion of $N$-$Q$-bimodular maps from $\Ltwo(Q, \tau)$ to $\Ltwo(Q_1,\Tr)$ with respect to the scalar product $\langle T , S \rangle = \tau(S^*T)$. We recall the following special case of \cite[Theorem 3.10]{enockvallin00}.

\begin{theorem}[{See \cite[Theorem 3.10]{enockvallin00}}]
\label{sec:depth-2:thm:characterization}
The inclusion $N \subset Q$ of II$_1$ factors has depth 2 if and only if the natural action of $N' \cap Q_2$ on $\Hom_{N \text{-}Q} (\Ltwo(Q), \Ltwo(Q_1))$ is faithful.
\end{theorem}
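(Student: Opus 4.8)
The plan is to make the natural action completely explicit by realising both relative commutants as endomorphism algebras of $N$-$Q$-bimodules. First I would record the identifications $N' \cap Q = \rB_{N\text{-}Q}(\Ltwo(Q))$ and $N' \cap Q_2 = \rB_{N\text{-}Q}(\Ltwo(Q_1))$, the second because the basic construction $Q_2 = \langle Q_1, e_Q \rangle$ is exactly the commutant of the right $Q$-action on $\Ltwo(Q_1)$. Since $[Q:N] < +\infty$, both are finite-dimensional $\Cstar$-algebras, the $N$-$Q$-bimodules $\bim{N}{\Ltwo(Q)}{Q} \subseteq \bim{N}{\Ltwo(Q_1)}{Q}$ decompose into finitely many irreducibles, and $\Hom_{N\text{-}Q}(\Ltwo(Q), \Ltwo(Q_1))$ is already complete. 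Write $\iota : \Ltwo(Q) \hookrightarrow \Ltwo(Q_1)$ for the natural inclusion; it is an $N$-$Q$-bimodule morphism with $\iota^*\iota = 1$ and $\iota \iota^* = e_Q \in Q' \cap Q_2 \subseteq N' \cap Q_2$.

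Next I would observe that $a \mapsto a\iota$ and $T \mapsto T\iota^*$ are mutually inverse between the relevant pieces: for $T \in \Hom_{N\text{-}Q}(\Ltwo(Q), \Ltwo(Q_1))$ one has $T\iota^* \in \rB_{N\text{-}Q}(\Ltwo(Q_1)) = N' \cap Q_2$ and $(T\iota^*)\iota = T$, so $\Hom_{N\text{-}Q}(\Ltwo(Q), \Ltwo(Q_1)) = (N' \cap Q_2)\,\iota$, while $a\iota = 0$ if and only if $ae_Q = 0$. Hence $a \mapsto a\iota$ induces a left $(N' \cap Q_2)$-linear isomorphism $(N' \cap Q_2)\, e_Q \cong \Hom_{N\text{-}Q}(\Ltwo(Q), \Ltwo(Q_1))$ that carries the natural action to left multiplication of $N' \cap Q_2$ on its left ideal $(N' \cap Q_2)\, e_Q$. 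A short computation using $e_Q x e_Q = \rE_Q(x)\,e_Q$ for $x \in Q_1$ and the Markov identity $\Tr(x e_Q) = [Q:N]^{-1}\,\tau(x)$ shows that the scalar product $\langle T, S\rangle = \tau(S^* T)$ matches the canonical one on $(N' \cap Q_2)\, e_Q$ up to the positive factor $[Q:N]$, which plays no role for faithfulness.

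Now, left multiplication of a finite-dimensional $\Cstar$-algebra $A$ on a left ideal $A e$ (with $e$ a projection) is faithful precisely when the central support $z_A(e)$ equals $1$. Therefore the natural action is faithful if and only if $z_{N' \cap Q_2}(e_Q) = 1$, that is, if and only if every irreducible $N$-$Q$-subbimodule of $\Ltwo(Q_1)$ already occurs in $\Ltwo(Q) = e_Q\,\Ltwo(Q_1)$.

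Finally I would match this with depth $2$. As $e_Q$ implements the trace-preserving conditional expectation $\rE_Q : N' \cap Q_1 \to N' \cap Q$, the algebra $\langle N' \cap Q_1, e_Q \rangle \subseteq N' \cap Q_2$ is the Jones basic construction of $N' \cap Q \subseteq N' \cap Q_1$, and by the definition adopted here $N \subseteq Q$ has depth $2$ exactly when this inclusion is an equality. One direction is immediate: in any basic construction the Jones projection has full central support, so depth $2$ forces $z_{N'\cap Q_2}(e_Q) = 1$. For the converse I would use the identification $\bim{N}{\Ltwo(Q_1)}{Q} \cong \bim{N}{\Ltwo(Q)}{N} \otimes_N \bim{N}{\Ltwo(Q)}{Q}$, which comes from $Q_1$ being the basic construction of $N \subseteq Q$, to see that the irreducible $N$-$Q$-bimodules occurring in $\Ltwo(Q_1)$ but not in $\Ltwo(Q)$ are exactly the vertices of the principal graph at distance $3$ from the root; so $z_{N'\cap Q_2}(e_Q) = 1$ forces the principal graph to have depth at most $2$, which is equivalent to $N' \cap Q \subseteq N' \cap Q_1 \subseteq N' \cap Q_2$ being a basic construction (on Bratteli diagrams this is the statement that the inclusion matrix of $N'\cap Q_1 \subseteq N'\cap Q_2$ is the transpose of that of $N'\cap Q \subseteq N'\cap Q_1$). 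The main obstacle is precisely this last equivalence --- turning ``$e_Q$ has full central support in $N'\cap Q_2$'' into ``$N'\cap Q_2 = \langle N'\cap Q_1, e_Q\rangle$''; I would either carry out the Bratteli-diagram bookkeeping in detail or invoke the corresponding statements from the analysis of depth-$2$ inclusions in \cite{enockvallin00}.
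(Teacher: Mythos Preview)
The paper does not prove this statement at all: it is quoted verbatim as a special case of \cite[Theorem 3.10]{enockvallin00} and used as a black box (the only place it is invoked is in the proof of Corollary~\ref{corol.depth2}). So there is no ``paper's own proof'' to compare against; your proposal is an independent argument.

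Your argument is essentially correct \emph{in the finite index case}, and the reduction you make --- identifying $\Hom_{N\text{-}Q}(\Ltwo(Q),\Ltwo(Q_1))$ with the left ideal $(N'\cap Q_2)\,e_Q$ and translating faithfulness into $z_{N'\cap Q_2}(e_Q)=1$ --- is clean and exactly the right picture. The final equivalence between ``$e_Q$ has full central support in $N'\cap Q_2$'' and ``$\langle N'\cap Q_1, e_Q\rangle = N'\cap Q_2$'' that you flag as the main obstacle is standard Bratteli-diagram bookkeeping in finite index and is not a genuine gap.

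The real issue is that you silently assume $[Q:N]<\infty$. The paper does not: Theorem~\ref{thm:construct-bimodule-categories} and Theorem~\ref{thm.main} apply the depth-$2$ characterisation to inclusions $N\subset Q$ that are merely quasi-regular with $N'\cap Q$ hyperfinite (for instance $R\subset A\ltimes R$ for a discrete Kac algebra $A$ in Section~\ref{sec:representation-categories}), and these are typically of infinite index. In that generality $Q_1$ and $Q_2$ are only semifinite, $N'\cap Q_1$ and $N'\cap Q_2$ need not be finite-dimensional, the principal-graph/Bratteli argument is unavailable as written, and even ``left multiplication of a finite-dimensional $\Cstar$-algebra on a left ideal'' no longer applies. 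Your central-support reformulation survives verbatim (the identification $\Hom_{N\text{-}Q}(\Ltwo(Q),\Ltwo(Q_1))\cong (N'\cap Q_2)e_Q$ and the criterion $z(e_Q)=1$ hold for arbitrary von Neumann algebras), but the last step --- showing that $z_{N'\cap Q_2}(e_Q)=1$ forces $N'\cap Q_2=\langle N'\cap Q_1,e_Q\rangle$ --- needs a genuinely different argument in infinite index; this is precisely what the machinery of \cite{enockvallin00} provides. If you only want the finite-index version you should say so explicitly, and then note that the applications in Section~\ref{sec:representation-categories} require the general statement.
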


As a consequence, we obtain the following characterization of depth 2 inclusions that we use in this article.

\begin{corollary} \label{corol.depth2}
Let $N,Q$ be II$_1$ factors. Then, the inclusion $N \subset Q$ has depth 2 if and only if $\bim{N}{\Ltwo(Q_1)}{Q}$ is isomorphic to an $N$-$Q$-subbimodule of $\bim{N}{\Ltwo(Q)^{\infty}}{Q}$.
\end{corollary}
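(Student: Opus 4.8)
The plan is to read off Corollary~\ref{corol.depth2} from Theorem~\ref{sec:depth-2:thm:characterization}, which characterises depth $2$ by faithfulness of the natural $*$-representation $\pi$ of $N' \cap Q_2 \cong \rB_{N\text{-}Q}(\Ltwo(Q_1))$ on the Hilbert space $\Hom_{N\text{-}Q}(\Ltwo(Q), \Ltwo(Q_1))$, where $\pi(x)$ acts by post-composition $T \mapsto x \circ T$. Since both the corollary and this theorem are equivalences, I must treat both implications, and the common ingredient is the closed subspace
\[ \cK \;:=\; \cspan \bigcup_{T} \im(T) \;\subseteq\; \Ltwo(Q_1) \eqcomma \]
with $T$ ranging over the bounded $N$-$Q$-bimodular maps $\Ltwo(Q) \to \Ltwo(Q_1)$ (which are total in $\Hom_{N\text{-}Q}(\Ltwo(Q), \Ltwo(Q_1))$ by construction of the latter as a completion). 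Since each such $T$ satisfies $T(a\xi b) = a\,T(\xi)\,b$, the space $\cK$ is an $N$-$Q$-subbimodule of $\bim{N}{\Ltwo(Q_1)}{Q}$, so the orthogonal projection $p$ onto $\cK^{\perp}$ is a bounded $N$-$Q$-bimodular operator on $\Ltwo(Q_1)$, i.e.\ $p \in N' \cap Q_2$, and $\pi(p)T = p\circ T = 0$ for all $T$ in the total set, whence $\pi(p) = 0$. Therefore $\pi$ is faithful only if $p = 0$, i.e.\ only if $\cK = \Ltwo(Q_1)$.

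For the direction ``depth $2$ $\Rightarrow$ embedding'', I would start from $\cK = \Ltwo(Q_1)$ (valid by the previous paragraph), pick, using separability, a sequence $(T_n)$ of bounded $N$-$Q$-bimodular maps $\Ltwo(Q) \to \Ltwo(Q_1)$ whose images span a dense subspace, and form
\[ W \colon \Ltwo(Q)^{\infty} \lra \Ltwo(Q_1) \eqcomma \qquad W\bigl((\xi_n)_n\bigr) \;=\; \sum_n 2^{-n}\bigl(1+\|T_n\|\bigr)^{-1}\, T_n(\xi_n) \eqstop \]
A Cauchy--Schwarz estimate makes $W$ a well-defined bounded map, it is $N$-$Q$-bimodular because the $T_n$ are (the actions on $\Ltwo(Q)^{\infty}$ being diagonal), and it has dense range by the choice of $(T_n)$. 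Taking the polar decomposition $W = w|W|$, the partial isometry $w$ restricts to an $N$-$Q$-bimodular unitary from $\overline{\im(|W|)} \subseteq \Ltwo(Q)^{\infty}$ onto $\overline{\im(W)} = \Ltwo(Q_1)$, which exhibits $\bim{N}{\Ltwo(Q_1)}{Q}$ as an $N$-$Q$-subbimodule of $\bim{N}{\Ltwo(Q)^{\infty}}{Q}$.

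For the converse I would fix an isometric $N$-$Q$-bimodular embedding $\iota \colon \Ltwo(Q_1) \hra \Ltwo(Q)^{\infty} = \bigoplus_n \Ltwo(Q)$, write $q_n$ for the coordinate projections, and set $T_n := (q_n \circ \iota)^{*}$, a bounded $N$-$Q$-bimodular map $\Ltwo(Q) \to \Ltwo(Q_1)$ and hence an element of $\Hom_{N\text{-}Q}(\Ltwo(Q), \Ltwo(Q_1))$. If $x \in N' \cap Q_2$ has $\pi(x) = 0$, then in particular $0 = (x \circ T_n)^{*} = T_n^{*}\circ x^{*} = q_n \circ \iota \circ x^{*}$ for all $n$, so $\iota\, x^{*} = 0$ and, $\iota$ being isometric, $x = 0$. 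Thus $\pi$ is faithful, and Theorem~\ref{sec:depth-2:thm:characterization} yields that $N \subset Q$ has depth $2$.

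I expect the only non-routine point to be the $N$-$Q$-bimodularity of the partial isometry $w$ coming out of the polar decomposition of $W$: one argues it by conjugating the identity $W = w|W|$ by the unitaries implementing the left $N$- and right $Q$-actions on $\Ltwo(Q)^{\infty}$ and $\Ltwo(Q_1)$ and invoking uniqueness of the polar decomposition. Everything else — the estimates for $W$, the identification $\rB_{N\text{-}Q}(\Ltwo(Q_1)) = N' \cap Q_2$ (already supplied in the text), and the totality of bounded bimodular maps in $\Hom_{N\text{-}Q}(\Ltwo(Q), \Ltwo(Q_1))$ — is bookkeeping, so I do not anticipate a genuine obstacle.
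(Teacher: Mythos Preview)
Your argument is correct and follows the same line as the paper: both proofs hinge on the $N$-$Q$-subbimodule $\cK \subset \Ltwo(Q_1)$ spanned by the ranges of bounded $N$-$Q$-bimodular maps $\Ltwo(Q) \to \Ltwo(Q_1)$ (which the paper phrases as the ``maximal $N$-$Q$-subbimodule of $\Ltwo(Q_1)$ contained in $\Ltwo(Q)^\infty$''), observe that the projection onto $\cK^\perp$ lies in $N'\cap Q_2$ and annihilates $\Hom_{N\text{-}Q}(\Ltwo(Q),\Ltwo(Q_1))$, and deduce the equivalence from Theorem~\ref{sec:depth-2:thm:characterization}. The only difference is presentational: you explicitly build the embedding $\Ltwo(Q_1)\hookrightarrow\Ltwo(Q)^\infty$ via a weighted sum of $T_n$'s and a polar decomposition, whereas the paper simply asserts it; and for the converse you test faithfulness against the specific family $T_n = (q_n\circ\iota)^*$, while the paper produces one $T$ for each nonzero projection $p$ --- both are straightforward variants of the same idea.
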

\begin{proof}
Let $N \subset Q$ be a depth 2 inclusion of II$_1$ factors. Let $p \in N' \cap Q_2$ be the projection onto the orthogonal complement of the maximal $N$-$Q$-subbimodule of $\Ltwo(Q_1)$ which is contained in $\bim{N}{\Ltwo(Q)^{ \infty}}{Q}$.  Then, $p$ acts trivially on $\Hom_{N \text{-}Q} (\Ltwo(Q), \Ltwo(Q_1))$. Therefore, $p = 0$ by Theorem \ref{sec:depth-2:thm:characterization}.

Assume that $\bim{N}{\Ltwo(Q_1)}{Q}$ is isomorphic to a subbimodule of $\bim{N}{\Ltwo(Q)^{ \infty}}{Q}$.  Let $p \in N' \cap Q_2$ be a non-zero projection. Then $p\Ltwo(Q_1)$ is a non-zero $N$-$Q$-bimodule, so there is a non-trivial $N$-$Q$-bimodular map $T:\Ltwo(Q) \ra p\Ltwo(Q_1)$. We have $p \cdot T = T \neq 0$, so $p$ acts non-trivially on $\Hom_{N \text{-}Q}(\Ltwo(Q), \Ltwo(Q_1))$. We have proven that $N' \cap Q_2$ acts faithfully on $\Hom_{N \text{-}Q}(\Ltwo(Q), \Ltwo(Q_1))$. We conclude using again Theorem \ref{sec:depth-2:thm:characterization}.
\end{proof}

\subsubsection{The fusion algebra of almost-normalizing bimodules} Let $N \subset M$ be a regular inclusion, i.e. $\norm_M(N)'' = M$. For any element $u \in \norm_M(N)$ the $N$-$N$-bimodule $u\Ltwo(N)$ has finite index and lies in $\Ltwo(M)$. Such bimodules are generalized by the notion of  bimodules almost-normalizing the inclusion $N \subset M$, which was introduced by Vaes in \cite{vaes09}. This notion was adapted to more general irreducible, quasi-regular inclusions of II$_1$ factors $N \subset M$ in \cite{falguieresvaes10}. We recall the definition.

\begin{definition}
Let $N \subset M$ be an irreducible and quasi-regular inclusion of type II$_1$ factors. A finite index $N$-$N$-bimodule is said to almost-normalize the inclusion $N \subset M$, inside $\FAlg(N)$, if it arises as a finite index $N$-$N$-subbimodule of a finite index $M$-$M$-bimodule. We denote by $\ANFAlg(N \subset M)$ the fusion algebra generated by $N$-$N$-bimodules almost-normalizing the inclusion $N \subset M$.
\end{definition}

Let $N$ be a II$_1$ factor and $\Gamma$ a countable group acting outerly on $N$.  Write $M = N \rtimes \Gamma$ and assume that the inclusion $N \subset N \rtimes \Gamma$ is rigid. It is proven in \cite[Lemma 4.1]{vaes09} that the fusion algebra $\ANFAlg(N \subset N \rtimes \Gamma)$ is a countable fusion subalgebra of $\FAlg(N)$. The next lemma is a straightforward adaptation of \cite[Lemma 4.1]{vaes09}.

\begin{lemma}\label{lem.countablefusionalgebra}
Let $N \subset M$ be a rigid, irreducible and quasi-regular inclusion of type II$_1$ factors. Then, the fusion algebra $\ANFAlg(N\subset M)$ is a countable fusion subalgebra of $\FAlg(N)$.
\end{lemma}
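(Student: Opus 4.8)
The plan is to adapt the proof of \cite[Lemma 4.1]{vaes09}, replacing the crossed-product structure used there by quasi-regularity of $N \subset M$ together with the machinery of Section~\ref{sec.fusion}.

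First I would verify that $\ANFAlg(N \subset M)$ is a fusion subalgebra of $\FAlg(N)$. The unit $\bim{N}{\Ltwo(N)}{N}$ lies in it, occurring inside the trivial finite index $M$-$M$-bimodule $\bim{M}{\Ltwo(M)}{M}$. Closure under subobjects and finite direct sums is immediate from the definition, and closure under contragredience follows from Theorem~\ref{thm.caracterisatio-bimod}: if $\cK \subset \cH$ with $\cH$ a finite index $M$-$M$-bimodule, then $\ol\cK \subset \ol\cH$ and $\ol\cH$ is again of finite index. The one point requiring an argument is closure under Connes tensor product: given finite index $N$-$N$-subbimodules $\cK_i \subset \cH_i$ with $\cH_i$ finite index $M$-$M$-bimodules, one must place $\cK_1 \ot_N \cK_2$ inside a finite index $M$-$M$-bimodule. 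When $[M:N] = \infty$ the naive candidate $\cH_1 \ot_M \cH_2$ does not obviously work, since the vectors of $\cK_1$ need not be $M$-bounded; here I would use Lemma~\ref{lemm.connes_tensor} with $\wt N = N$, which realizes, in a quasi-regular inclusion, the Connes tensor product of two finite index bimodules as a genuine product of bounded vectors inside a given bimodule.

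The heart of the matter is countability. By quasi-regularity, $\Ltwo(M)$ is the closed linear span of the finite index $N$-$N$-bimodules $\ol{NaN}$ with $a \in \QN_M(N)$, and since $\Ltwo(M)$ is separable this exhibits $\bim{N}{\Ltwo(M)}{N}$ as a \emph{countable} orthogonal direct sum of finite index $N$-$N$-bimodules; in particular only countably many isomorphism classes of finite index $N$-$N$-bimodules occur inside $\Ltwo(M)$. Let $\cA_0 \subset \FAlg(N)$ be the countable fusion subalgebra they generate. Since an almost-normalizing bimodule is, by definition, a finite index $N$-$N$-subbimodule of some finite index $M$-$M$-bimodule, the claim reduces to showing that every such bimodule already lies in $\cA_0$ — that is, that the $N$-$N$-restriction of a finite index $M$-$M$-bimodule contributes no finite index $N$-$N$-bimodule outside $\cA_0$. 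This is the step that uses rigidity of $N \subset M$, and I expect it to be the main obstacle: following \cite{vaes09}, one feeds the approximate-identity data carried by the finite index $M$-$M$-bimodules under consideration into the completely positive map characterization of relative property (T) in Theorem~\ref{thm:characterization-property-T}, and concludes that a continuum of pairwise non-isomorphic finite index $N$-$N$-bimodules cannot arise this way; irreducibility of $N \subset M$ enters to eliminate the obvious obstructions. That rigidity cannot be dropped is clear from $N = M = R$ hyperfinite, where $N \subset M$ is irreducible and quasi-regular but $\ANFAlg(R \subset R) = \FAlg(R)$ is uncountable. The remaining verifications — the fusion-algebra axioms and the separability bookkeeping — are routine once Lemma~\ref{lemm.connes_tensor} is in hand.
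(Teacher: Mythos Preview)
The paper gives no proof here, only citing \cite[Lemma~4.1]{vaes09} and calling the adaptation straightforward, so there is no detailed argument to compare against. Your outline names the right ingredients, but the countability step has a genuine gap.

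The reduction you propose --- that every almost-normalizing $N$-$N$-bimodule already lies in the fusion subalgebra $\cA_0$ generated by the finite index $N$-$N$-subbimodules of $\bim{N}{\Ltwo(M)}{N}$, i.e.\ that $\ANFAlg(N\subset M)\subset\FAlg(N\subset M)$ --- is false in general, and rigidity does not rescue it. Already in the crossed-product setting of \cite{vaes09}: if $M=N\rtimes\Gamma$ and $\theta\in\Aut(N)$ commutes with the $\Gamma$-action but $\theta\notin\Gamma$ inside $\Out(N)$, then $\theta$ extends to an automorphism of $M$, and the irreducible $N$-$N$-constituents of the index-one $M$-$M$-bimodule $\cH(\theta)$ are almost-normalizing yet lie outside $\cA_0$. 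So the sentence ``the claim reduces to showing that every such bimodule already lies in $\cA_0$'' is where the argument breaks. In \cite{vaes09} the rigidity enters differently: it is used, via the completely positive maps attached to finite index $\psi:M\to pM^np$ together with $\twonorm$-separability, to control the \emph{ambient} finite index $M$-$M$-bimodules rather than to force their $N$-$N$-pieces into $\cA_0$; quasi-regularity and irreducibility then guarantee that each such $M$-$M$-bimodule decomposes into countably many finite index $N$-$N$-subbimodules. Your paragraph gestures at this mechanism but attaches it to the wrong conclusion.

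Two smaller remarks. First, $\ANFAlg(N\subset M)$ is \emph{defined} as the fusion subalgebra generated by the almost-normalizing bimodules, so closure under Connes tensor product is automatic and need not be argued. Second, even if you did want to show that almost-normalizing bimodules themselves are stable under $\otimes_N$, Lemma~\ref{lemm.connes_tensor} does not apply as you invoke it: that lemma requires the left tensor factor $\cK$ to be a subbimodule of $\Ltwo(M)$, whereas your $\cK_1$ sits inside an abstract finite index $M$-$M$-bimodule $\cH_1$.
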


\subsubsection{Freeness of fusion algebras}
The notion of freeness of fusion algebras was introduced in \cite[Section 1.2]{bischjones97}, in the study of free composition of subfactors. We recall the definition.

\begin{definition}[{\cite[Section 1.2]{bischjones97}}] \label{def.freeness}
Let $\cA$ be a fusion algebra and $\cA_0, \cA_1 \subset \cA$ fusion subalgebras. We say that $\cA_0$ and $\cA_1$ are \emph{free inside $\cA$} if every alternating product of irreducibles in $\cA_i \setminus \{e\}$, remains irreducible and different from $e$.
\end{definition}

Let $M$ be a II$_1$ factor and $\bim{M}{\cK}{M}$ a finite index $M$-$M$-bimodule. Whenever $\al \in \Aut(M)$, we define the conjugation of $\cK$ by $\al$ as the bimodule $\cK^\al = \cH(\al^{-1}) \ot_M \cK \ot_M \cH(\al)$. Denote by $R$ the hyperfinite II$_1$ factor. Vaes proved in \cite[Theorem 5.1]{vaes09} that countable fusion subalgebras of $\FAlg(R)$ can be made free by conjugating one of them with an automorphism of $R$ (see Theorem \ref{theo.free} below). Note that the same result has first been proven for countable subgroups of $\Out(R)$ in \cite{ioanapetersonpopa08}. In both cases, the key ingredients come from \cite{popa95}.

\begin{theorem}[{\cite[Theorem 5.1]{vaes09}}]\label{theo.free}
Let $R$ be the hyperfinite II$_1$ factor and $\cA_0,\cA_1$ two countable fusion
subalgebras of $\FAlg(R)$. Then,
$$\{\alpha \in \Aut(R) \mid  \cA_0^{\alpha}\ \text{and}\ \cA_1\ \text{are free}\}$$
is a dense $G_{\delta}$-subset of $\Aut(R)$.
\end{theorem}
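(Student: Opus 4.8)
\emph{Proof strategy.} The plan is a Baire category argument in the Polish group $\Aut(R)$, topologised by pointwise $\|\cdot\|_2$-convergence. For $\al\in\Aut(R)$ and a finite index $R$-$R$-bimodule $\cK$ write $\cK^\al=\cH(\al^{-1})\ot_R\cK\ot_R\cH(\al)$; since $\cK\mapsto\cK^\al$ is a unitary tensor autoequivalence of $\bimod(R)$, $\cA_0^\al$ is again a fusion subalgebra of $\FAlg(R)$, with irreducibles $\{x^\al\mid x\in\cA_0\ \text{irreducible}\}$. By Definition~\ref{def.freeness}, $\cA_0^\al$ and $\cA_1$ fail to be free exactly when some finite alternating word $w=z_1z_2\cdots z_k$ ($k\ge2$), whose letters are irreducibles different from $e$ taken alternately from $\cA_0$ and from $\cA_1$, has the property that the Connes tensor product $w^\al$ --- obtained from $w$ by replacing every letter lying in $\cA_0$ by its $\al$-conjugate --- is reducible or isomorphic to $\Ltwo(R)$. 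Since $\cA_0$ and $\cA_1$ are countable there are only countably many such words; enumerating them $(w_j)_j$ and putting
\[G_j=\{\al\in\Aut(R)\mid w_j^\al \text{ is irreducible and } w_j^\al\not\cong\Ltwo(R)\}\eqcomma\]
one has $\{\al\mid\cA_0^\al\ \text{and}\ \cA_1\ \text{free}\}=\bigcap_jG_j$, so it suffices to show that each $G_j$ is open and dense.

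The first step is to see that each $G_j$ is open. Along the field $\al\mapsto w_j^\al$ the underlying Hilbert space stays fixed while the left $R$-action varies strongly continuously, so intertwiners pass to limits; combined with the fact that finite index bimodules have intertwiner spaces of uniformly bounded dimension (controlled by the Jones index, which is constant along the field), a standard semicontinuity argument shows that $\al\mapsto\dim\Hom_{R\text{-}R}(w_j^\al,w_j^\al)$ and $\al\mapsto\dim\Hom_{R\text{-}R}(\Ltwo(R),w_j^\al)$ are upper semicontinuous. Hence $\{\al\mid\dim\Hom_{R\text{-}R}(w_j^\al,w_j^\al)\le1\}$ and $\{\al\mid\dim\Hom_{R\text{-}R}(\Ltwo(R),w_j^\al)=0\}$ are open, and $G_j$ is their intersection.

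The heart of the proof, and the step I expect to be the main obstacle, is the density of $G_j$; this is where the hyperfiniteness of $R$ is indispensable. Fix $\al_0\in\Aut(R)$, a finite set $F\subset R$ and $\eps>0$; one must produce $\al\in G_j$ with $\|\al(x)-\al_0(x)\|_2<\eps$ for all $x\in F$. Only the finitely many irreducible bimodules occurring as letters of $w_j$ are relevant, and, using $R\cong\bigotimes_n\M_{k_n}(\C)$, one can realise all of them from (approximately) finite dimensional data, so that the relevant supports together with $F$ sit, up to $\eps$, inside a single finite dimensional subfactor $R_0\subset R$. Writing $R\cong R_0\vnt(R_0'\cap R)$ and exploiting the ``room'' in the complementary hyperfinite factor, Popa's general-position results for the hyperfinite factor \cite{popa95} then yield an automorphism $\al$ agreeing with $\al_0$ up to $\eps$ on $F$ and placing the $\al$-conjugated letters of $w_j$ in free position relative to the untouched ones; for such $\al$ every endomorphism of $w_j^\al$ is scalar and $w_j^\al$ contains no copy of $\Ltwo(R)$, i.e.\ $\al\in G_j$. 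With $G_j$ open and dense for every $j$, the Baire category theorem gives that $\bigcap_jG_j$ is a dense $G_\delta$ subset of $\Aut(R)$, which is the assertion.
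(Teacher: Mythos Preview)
The paper does not prove this statement: Theorem~\ref{theo.free} is quoted verbatim from \cite[Theorem~5.1]{vaes09} and is used as a black box in the proof of Theorem~\ref{thm:construct-bimodule-categories}. There is therefore no ``paper's own proof'' to compare your proposal against.

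That said, your outline is the correct one and matches the architecture of Vaes's original argument: reduce freeness to a countable family of conditions indexed by alternating words, express the good set as a countable intersection $\bigcap_j G_j$, check that each $G_j$ is open by upper semicontinuity of $\al\mapsto\dim\Hom_{R\text{-}R}(w_j^\al,w_j^\al)$, and then prove density using the hyperfiniteness of $R$ together with the results of \cite{popa95}. Your treatment of the $G_\delta$ part is essentially complete.

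The density step, however, is where all the content lies, and your paragraph there is a description of what one would like to do rather than an argument. The phrase ``placing the $\al$-conjugated letters of $w_j$ in free position relative to the untouched ones'' is not a statement one can cite from \cite{popa95}; what Vaes actually invokes is a precise result of Popa on the existence, inside the hyperfinite II$_1$ factor, of copies of $R$ that are freely independent from a given finite family of finite index bimodules (equivalently, from a given finite set of finite index subfactors), and the perturbation is carried out at that level rather than via an ad hoc finite dimensional approximation $R_0$. Your sketch does not explain why the resulting $\al$ makes $w_j^\al$ irreducible as opposed to merely having no obvious intertwiners, nor why the construction can be made to approximate a given $\al_0$ on $F$. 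If you want this to be a proof rather than a plan, you need to state exactly which theorem from \cite{popa95} you are using and carry out the perturbation argument in detail; otherwise, cite \cite[Theorem~5.1]{vaes09} as the present paper does.
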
 


\section{Proof of Theorem \ref{thm:construct-bimodule-categories}}
\label{section.thmC}

We recall the following construction, from \cite{falguieresvaes10}. Consider the group $\Gamma = \Q^3 \oplus \Q^3 \rtimes \SL(3,\Q)$, defined by the action $A \cdot (x,y) = (Ax , (A^t)^{-1} y)$ of $\SL(3,\Q)$ on $\Q^3 \oplus \Q^3$. Take $\al \in \RR - \QQ$, define $\Omega_\alpha \in \rZ^2(\Q^3\oplus \Q^3,S^1)$ such that
  \begin{align*}
      \Omega_\alpha\bigl( (x,y),(x',y') \bigr) & =
             \exp\bigl(2\pi i\al (\langle x,y'\rangle - \langle y, x'\rangle)\bigr)
                    \qquad\text{for all}\;\; (x,y), (x',y') \in \Q^3 \oplus \Q^3 \eqcomma
  \end{align*}
and extend $\Omega_\alpha$ to an $S^1$-valued $2$-cocycle on $\Gamma$ by $\SL(3,\Q)$-invariance. Write $\Lambda = \ZZ^3 \oplus \ZZ^3$. Then, by \cite[Lemma 3.3]{falguieresvaes10} and \cite[Example 3.4]{falguieresvaes10}, the inclusions $N \subset N_0 \subset P$ given by
\[N = \rL_{\Omega_\alpha}(\Lambda), \quad N_0
    = \rL_{\Omega_\alpha}(\Z^3 \oplus \Z^3 \rtimes \SL(3,\Z)), \quad P
    = \rL_{\Omega_\alpha}(\Gamma)\]
satisfy the following properties.
\begin{itemize}
\item[$(\cP_1)$]
$N \subset P$ is irreducible and quasi-regular,
\item[$(\cP_2)$] $N_0 \subset P$ is quasi-regular,
\item[$(\cP_3)$] $N_0$ has property $(T)$.
\end{itemize}
Note that $(\cP_1)$ follows from the fact that the inclusion $\Lambda \subset \Gamma$ is almost-normal, meaning the commensurator $\Comm_\Gamma(\Lambda)$ defined as
\[\Comm_\Gamma(\Lambda) :=\{ g \in \Gamma \mid g \Lambda g^{-1} \cap \Lambda\ \textrm{has finite index in}\ g\Lambda g^{-1} \textrm{and in}\ \Lambda \}\]
 is the whole of $\Gamma$. We know that the group $\SL(3,\Q)$ does not have any non-trivial finite dimensional unitary representations (see \cite{vonneumannwigner40}). The smallest normal subgroup of $\Gamma$ containing $\SL(3,\Q)$ is $\Gamma$ itself. This gives the following property.
\begin{itemize}
\item[$(\cP_4)$] The group $\Gamma$ has no non-trivial finite dimensional unitary representations.
\end{itemize}
We will also need the following additional property, proven in \cite[Example 3.4]{falguieresvaes10}.
\begin{itemize}
\item[$(\cP_5)$] The inclusion $\rL_{\Omega_\alpha}(\Lambda_0) \subset \rL_{\Omega_\alpha}(\Gamma)$ is irreducible, for every finite index subgroup $\Lambda_0 < \Lambda$.
\end{itemize}

\begin{theorem} \label{thm.main}
Let $Q$ be a II$_1$ factor such that $N \subset Q$. Let $B = N' \cap Q$ and assume that
\begin{itemize}
\item
$N \subset Q$ is a quasi-regular and depth $2$ inclusion,
\item
$B$ is hyperfinite,
\item
there is no non-trivial $*$-homomorphism from $N_0$ to any amplification of $Q$,
\item
the fusion algebras $\ANFAlg(N \subset P)$ and $\FAlg(N \subset Q)$ defined in Section \ref{sec.fusion} are free inside $\FAlg(N)$.
\end{itemize}
Then, for $M = \bigl( P \vNtensor B \bigr )*_{N \vnt B} Q$, we have that $\Bimod(M) \simeq \Bimod(Q \subset Q_1)$, as tensor \Cstar-categories, where $N \subset Q \subset Q_1$ is the basic construction.
\end{theorem}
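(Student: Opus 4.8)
The plan is to construct the fully faithful tensor functor $F \colon \bimod(Q \subset Q_1) \to \bimod(M)$ from step (1) of the introduction and then show it is essentially surjective using the rigidity machinery. First I would build $F$: since $N \subset Q$ is quasi-regular and depth $2$, every finite-index $Q$-$Q$-bimodule appearing in $\rL^2(Q_1)$ arises, via Corollary \ref{corol.depth2}, as a sub of $\bim{N}{\rL^2(Q)^\infty}{Q}$ cut down appropriately; the idea is that inside $M = (P \vNtensor B) *_{N \vnt B} Q$ the relative commutant and quasi-normalizer structure lets one realize these bimodules as $M$-$M$-bimodules. Concretely, one sets $\wt N = N \vnt B \subset Q$ and uses Lemma \ref{lemm.connes_tensor} (the hypotheses $N \subset M$ quasi-regular and a finite-index sub-inclusion are exactly what is available here) to pass between Connes tensor products and products of subbimodules inside $\rL^2(M)$. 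The functoriality and full faithfulness of $F$ is the routine part: it amounts to checking that the map on objects is compatible with Connes tensor product (using $\cH(\rho) \ot_N \cH(\psi) \cong \cH((\id \ot \psi)\rho)$) and that $\Hom$-spaces are preserved, which follows because $Q \subset M$ is again quasi-regular and irreducible-on-the-relevant-pieces.

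The substantial work is essential surjectivity: given an arbitrary finite-index $M$-$M$-bimodule $\bim{M}{\cH}{M}$, I must show it is isomorphic to one in the image of $F$. The strategy, following \cite{ioanapetersonpopa08} and \cite{vaes09, falguieresvaes10}, is to locate $N \vnt B$ (or rather $N_0 \vnt B$) inside $M$ on both sides of $\cH$. Since $N_0$ has property (T) (property $(\cP_3)$), the inclusion $N_0 \vnt B \subset M$ is rigid, so Theorem \ref{IPP1} applied to the amalgamated free product decomposition of $M$ forces $N_0 \vnt B$ (amplified, acting on $\cH$) to embed into one of the two factors $P \vNtensor B$ or $Q$; the hypothesis that there is no non-trivial $*$-homomorphism from $N_0$ to any amplification of $Q$ rules out the $Q$-side, pinning it into $P \vNtensor B$. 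Next, Theorem \ref{IPP2} controls the quasi-normalizer: the finite-index subbimodules of $\rL^2(M)$ generated over $N_0 \vnt B$ stay inside $\rL^2(P \vNtensor B)$, which together with properties $(\cP_1)$, $(\cP_2)$, $(\cP_5)$ and the almost-normality of $\Lambda \subset \Gamma$ lets one identify the $N \vnt B$-$N \vnt B$-bimodules occurring in $\cH$ as lying in $\ANFAlg(N \subset P) \vnt \FAlg(B)$-type pieces, while the $Q \subset M$ side produces pieces in $\FAlg(N \subset Q)$. The freeness hypothesis on $\ANFAlg(N \subset P)$ and $\FAlg(N \subset Q)$ inside $\FAlg(N)$ (Definition \ref{def.freeness}) is then invoked to show that no nontrivial alternating products survive, which forces $\cH$ to decompose along the $Q$-side only, i.e.\ to come from a $Q$-$Q$-bimodule in $\rL^2(Q_1)$ and hence lie in the image of $F$.

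After essential surjectivity one still must verify that $\bim{Q}{\rL^2(Q_1)}{Q}$ does generate $\bimod(Q \subset Q_1)$ compatibly, but that is definitional. Property $(\cP_4)$ enters to guarantee that spurious one-dimensional (index-one) bimodules coming from characters of $\Gamma$ do not appear, so that $\grp(M)$ is not enlarged. The main obstacle I expect is the intertwining step: carefully combining Theorems \ref{IPP1} and \ref{IPP2} to go from "some amplification of $N_0 \vnt B$ embeds into $P \vNtensor B$" to a clean statement about \emph{which} $N \vnt B$-$N \vnt B$-bimodules occur in $\cH$, controlling the possible partial isometries and the essentially-finite-index inclusions produced by Theorem \ref{thm.intertw.vaes}, and then feeding the resulting data into the freeness argument. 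This requires the two hyperfiniteness hypotheses (on $B = N' \cap Q$ and implicitly on $N' \cap P$ coming from the construction) so that Theorem \ref{theo.free}-style freeness of fusion subalgebras of $\FAlg(R)$ is applicable and the relative commutants are finite-dimensional enough to keep indices finite throughout.
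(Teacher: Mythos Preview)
Your overall architecture—build a fully faithful $F$, then use IPP rigidity for essential surjectivity—matches the paper. But the essential surjectivity argument has a genuine gap, and several hypotheses are misattributed.

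The construction of $F$ is more concrete than you suggest and does not use Lemma~\ref{lemm.connes_tensor}. The paper first shows that $\bimod(Q \subset Q_1)$ is equivalent to the category $\cC$ whose objects are finite-index inclusions $\psi:Q \to pQ^\infty p$ with $p \in B^\infty$ and $\psi(x) = xp$ for $x \in N$. Any such $\psi$ extends to a map $\iota * \psi: M \to pM^\infty p$ via the universal property of the amalgamated free product (since $\psi$ is the identity on $N$ and preserves $B$, it glues with $x \mapsto xp$ on $P \vnt B$). Setting $F(\cH(\psi)) = \cH(\iota*\psi)$, full faithfulness is immediate from $P' \cap M = B$.

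The real gap is what happens \emph{after} IPP1, IPP2 and Theorem~\ref{thm.intertw.vaes} give you $\psi:M \to pM^\infty p$ with $\psi(P \vnt B) \subset p(P \vnt B)^\infty p$ of essentially finite index. This does not yet show $\cH$ lies in the image of $F$: for that you must conjugate $\psi$ so that $\psi(x) = px$ for all $x \in P$ with $p \in B^\infty$, after which $\psi(Q) \subset pQ^\infty p$ follows from freeness and $\psi$ lands in $\cC$. This ``trivialization on $P$'' (Lemmas~\ref{lem:fi-N-bimodule}--\ref{la:remark_trivializing_inclusions}) is the technical heart of the proof and is where $(\cP_4)$ and $(\cP_5)$ actually enter. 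The argument runs: (i) freeness is first used to show that any irreducible finite-index $N$-$N$-subbimodule of a finite-index $P \vnt B$-subbimodule of $\cH$ must sit inside $\rL^2(P)$—if not, alternating it with nontrivial pieces from $\rL^2(P)$ and $\rL^2(Q)$ produces infinitely many pairwise disjoint $N$-$M$-subbimodules of $\cH$, contradicting $\dim_{\text{-}M}(\cH) < \infty$; (ii) almost-normality of $\Lambda \subset \Gamma$ and $(\cP_5)$ then produce a copy of the trivial $N$-$N$-bimodule, and after conjugation one has $\psi(x) = px$ for $x \in N$ with $p \in B^\infty$; (iii) finally, $g \mapsto \psi(u_g)u_g^*$ is a unitary representation of $\Gamma$ in the finite type~I algebra $\psi(B)' \cap pB^\infty p$, hence trivial by $(\cP_4)$, giving $\psi(u_g) = u_g p$. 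Your reading of $(\cP_4)$ as merely ruling out ``spurious characters'' misses this: it kills a genuine finite-dimensional representation of $\Gamma$ and is indispensable for straightening $\psi$ on all of $P$. Your sentence ``freeness forces $\cH$ to decompose along the $Q$-side only'' skips this entire step.

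Two smaller corrections. Hyperfiniteness of $B$ is not used to invoke Theorem~\ref{theo.free}—freeness is a \emph{hypothesis} of Theorem~\ref{thm.main}, only established later in the proof of Theorem~\ref{thm:construct-bimodule-categories}. Here hyperfiniteness of $B$ is used in Lemma~\ref{lem:fi-N-bimodule} to approximate $B$ by finite-dimensional subalgebras so that property~(T) of $N_0$ forces $\vfi(N_0) \prec_{P \vnt B} P$. And one applies Theorem~\ref{IPP1} to $\psi(N_0)$, not to $\psi(N_0 \vnt B)$: $B$ is hyperfinite and carries no rigidity.
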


{\bf Outline of the proof of Theorem \ref{thm:construct-bimodule-categories}.} We first prove Theorem \ref{thm.main} in two steps. In Section \ref{sec:functor}, we construct a fully faithful tensor \Cstar-functor $F : \bimod(Q \subset Q_1) \to \bimod(M)$. In Section \ref{sec:F-surjective}, we prove that $F$ is essentially surjective, which completes the proof of Theorem \ref{thm.main}. In Section \ref{proof.construct-bimodule-categories}, we give a proof of Theorem \ref{thm:construct-bimodule-categories}, relying on Theorem \ref{thm.main}.

In the rest of Section \ref{section.thmC} we will always use the notations of Theorem \ref{thm.main}.

\subsection{A fully faithful functor $F : \bimod(Q \subset Q_1) \to \bimod(M)$}
\label{sec:functor}

Denote by $\cC$ the tensor \Cstar-category whose objects are finite index inclusions $\psi: Q \ra pQ^\infty p$ with $p \in B^\infty$, $(\Tr \otimes \tau)(p) < \infty$ and $\psi(x) = xp$ for all $x \in N$. The tensor product on $\cC$ is given by $\psi_1 \otimes_{\cC} \psi_2 = (\id \otimes \psi_2) \circ \psi_1$. Morphisms of $\cC$ are given by
\[\mathrm{Hom}_{\cC}(\psi_1, \psi_2) = \{T \in q B^\infty p \amid \forall x \in Q: \,T\psi_1(x) =\psi_2(x) T \} \eqstop\]

\begin{proposition}
\label{prop:representants-for-bimodules}
  The natural inclusion $I: \psi \mapsto \cH(\psi)$ of $\cC$ into $\bimod(Q)$ defines an equivalence of tensor \Cstar-categories $\cC \simeq \bimod(Q \subset Q_1)$.
\end{proposition}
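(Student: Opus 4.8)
The plan is to verify that the functor $I : \cC \to \bimod(Q \subset Q_1)$ is (i) well-defined, i.e.\ lands in $\bimod(Q \subset Q_1)$, (ii) fully faithful, and (iii) essentially surjective; an equivalence then follows. For the general description of finite index bimodules recalled in Section \ref{section.fibimodules}, every finite index $Q$-$Q$-bimodule is of the form $\cH(\psi)$ for a finite index inclusion $\psi : Q \to pQ^m p$ with $p \in Q^\infty$, and $\cH(\psi) \cong \cH(\eta)$ iff $\psi$ and $\eta$ are unitarily conjugate in $Q^\infty$; moreover $\mathrm{Hom}(\cH(\psi_1), \cH(\psi_2))$ is exactly the space of intertwiners $\{T \in q Q^\infty p : T\psi_1(x) = \psi_2(x)T \ \forall x \in Q\}$, and the Connes tensor product satisfies $\cH(\psi_1) \otimes_Q \cH(\psi_2) \cong \cH((\id \otimes \psi_2)\circ\psi_1)$. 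So $I$ is automatically a faithful tensor $\Cstar$-functor onto its image once one checks that the morphism spaces of $\cC$ coincide with the full morphism spaces in $\bimod(Q)$ between the corresponding bimodules; the only subtlety is that in $\cC$ we demand the intertwiner $T$ to lie in $B^\infty = (N'\cap Q)^\infty$ rather than in $Q^\infty$. This is where the constraint $\psi(x) = xp$ for $x \in N$ enters: if $T \in q Q^\infty p$ satisfies $T\psi_1(x) = \psi_2(x)T$ for all $x \in Q$, then in particular $Tx = T\psi_1(x) = \psi_2(x)T = xT$ for all $x \in N$, so $T \in (N'\cap Q^\infty) \cap qQ^\infty p = qB^\infty p$. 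Hence the morphism spaces agree and $I$ is full and faithful.

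For essential surjectivity I would argue in two directions. First, every object $\cH(\psi)$ with $\psi \in \cC$ lies in $\bimod(Q \subset Q_1)$: by the depth $2$ hypothesis on $N \subset Q$ and Corollary \ref{corol.depth2}, $\bim{N}{\Ltwo(Q_1)}{Q}$ is an $N$-$Q$-subbimodule of $\bim{N}{\Ltwo(Q)^\infty}{Q}$, so the $Q$-$Q$-bimodule $\Ltwo(Q_1)$ decomposes into finite index pieces $\cH(\psi_j)$ with $\psi_j(x) = xp_j$ on $N$, and these generate, under Connes tensor product and subobjects, a tensor subcategory; one checks that $\cC$ is precisely the tensor subcategory of $\bimod(Q)$ generated by such $\psi$, namely those trivial on $N$. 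I would make this precise by noting that the condition ``$\psi(x) = xp$ for all $x \in N$'' is stable under $\psi_1 \otimes_{\cC} \psi_2 = (\id\otimes\psi_2)\circ\psi_1$, under taking subobjects (the intertwiner lands in $B^\infty$, hence the compression of $\psi$ again restricts to $x \mapsto xp'$ on $N$), under direct sums, and under conjugates, so that $I(\cC)$ is a tensor $\Cstar$-subcategory of $\bimod(Q)$ containing all the irreducible constituents of $\Ltwo(Q_1)$ — hence $I(\cC) \supseteq \bimod(Q \subset Q_1)$. Conversely, every finite index $N$-$N$-subbimodule appearing in $\Ltwo(Q_1)$, viewed with its $Q$-$Q$-structure, is of the form $\cH(\psi)$ with $\psi$ trivial on $N$, so $\bimod(Q \subset Q_1)$ is generated by objects in $I(\cC)$; since $I(\cC)$ is already closed under subobjects, direct sums, tensor products and conjugates, we get $I(\cC) = \bimod(Q \subset Q_1)$, i.e.\ $I$ is essentially surjective onto $\bimod(Q \subset Q_1)$.

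The main obstacle I anticipate is pinning down the equality $I(\cC) = \bimod(Q\subset Q_1)$ rather than a mere inclusion — one needs that an arbitrary finite index $Q$-$Q$-bimodule whose underlying $N$-$N$-bimodule appears in $\Ltwo(Q_1)$ admits a representative $\psi$ that is literally the identity (times $p$) on $N$, and conversely that every object of $\cC$ is built from constituents of $\Ltwo(Q_1)$; the first point is a normalization argument using the uniqueness of $\psi$ up to unitary conjugation in $Q^\infty$ together with the fact that a unitary conjugating $x \mapsto xp$ into another such map must lie in $B^\infty$, while the second uses depth $2$ in the form of Corollary \ref{corol.depth2} to guarantee that $\Ltwo(Q_1)$, as a $Q$-$Q$-bimodule, generates all of $\cC$ under the category operations. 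Everything else — that $I$ is a tensor functor, the identification of Hom-spaces, closure of the defining property of $\cC$ under the categorical operations — is routine bookkeeping using the dictionary of Section \ref{section.fibimodules}.
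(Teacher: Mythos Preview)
Your treatment of fullness and faithfulness is correct and matches the paper verbatim: an intertwiner $T\in qQ^\infty p$ between $\cH(\psi_1)$ and $\cH(\psi_2)$ commutes with $N$ because $\psi_i|_N = \,\cdot\, p$, hence $T\in qB^\infty p$.

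The gap is in the range determination, and in fact you have the role of depth~2 inverted. The inclusion $I(\cC)\subseteq\bimod(Q\subset Q_1)$ does \emph{not} use depth~2 at all. The paper's argument is a one--line observation you are missing: for $\psi\in\cC$, extend $\psi$ to a bounded map $\Ltwo(Q)\to p\Ltwo(Q)^\infty$; since $\psi(x n)=\psi(x)(np)$ for $n\in N$, each matrix entry of $\psi$ commutes with the right $N$-action on $\Ltwo(Q)$, hence lies in $(JNJ)'=\langle Q,e_N\rangle=Q_1$. A column of $\psi$ then gives a nonzero partial isometry $v\in p(\M_{\infty,1}(\CC)\vnt Q_1)$ with $vx=\psi(x)v$ for all $x\in Q$, and a maximality argument upgrades this to $u$ with $uu^*=p$, exhibiting $\cH(\psi)$ as a $Q$-$Q$-subbimodule of $\Ltwo(Q_1)^\infty$. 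Your proposed route---``depth~2 guarantees that $\Ltwo(Q_1)$ tensor-generates all of $\cC$''---is not what Corollary~\ref{corol.depth2} says (that corollary concerns the $N$-$Q$-bimodule structure of $\Ltwo(Q_1)$, not generation of $\cC$), and you give no mechanism for why an arbitrary $\psi\in\cC$ would appear inside a tensor power of $\Ltwo(Q_1)$.

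Conversely, the inclusion $\bimod(Q\subset Q_1)\subseteq I(\cC)$ is precisely where depth~2 enters, and your ``normalization'' sketch skips the substantive step. Given $\cH\subset\Ltwo(Q_1)^{\otimes_Q k}$, one needs a nonzero $N$-central vector in $\cH$ (viewed in some $q\Ltwo(Q)^n$) to begin the conjugation; this is supplied by Corollary~\ref{corol.depth2}, which (after an easy induction on $k$) shows $\bim{N}{\cH}{Q}$ embeds in $\bim{N}{\Ltwo(Q)^\infty}{Q}$. Polar decomposition of such a vector gives a partial isometry $v$ with $\psi(x)v=vx$ for $x\in N$ and $v^*v\in B$; a maximality argument then produces $u$ with $uu^*=q$ and $u^*u\in B^\infty$, and conjugating by $u^*$ normalizes $\psi$ to lie in $\cC$. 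Your phrase ``a unitary conjugating $x\mapsto xp$ into another such map must lie in $B^\infty$'' presupposes you already have two representatives trivial on $N$, which is exactly what has to be produced.
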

\begin{proof}
It is easy to check that $I$ is a faithful tensor \Cstar-functor. We prove that $I$ is full and that its essential range is $\bimod(Q \subset Q_1)$.

We first prove that $I$ is full. Let $T: p \Ltwo(Q)^{\infty} \ra q \Ltwo(Q)^{\infty}$ be a $Q$-$Q$-bimodular map between $\cH(\psi_1)$ and $\cH(\psi_2)$. Then $T \in pQ^\infty q$, since $T$ is right $Q$-modular. We have $Txp = xqT$ for all $x \in N$, so it follows that $T \in pB^\infty q$. This proves that $I$ is full.

Let us prove that the image of $I$ is contained in $\bimod(Q \subset Q_1)$. Take a finite index inclusion $\psi: Q \ra pQ^\infty p$ with $p \in B^\infty$, $(\Tr \otimes \tau)(p) < \infty$ and $\psi(x) = xp$ for all $x \in N$ and let $\cH = \cH(\psi)$. We claim that $\cH$ is a $Q$-$Q$-subbimodule of $\eL^2(Q_1)^\infty$. Extend $\psi$ to a map $\rL^2(Q) \to \rL^2(p Q^\infty p)$ and note that its entries, considered as operators on $\rL^2(Q)$, lie in $Q_1$. Any non-zero column of $\psi$ defines a partial isometry $v \in p(\Cmatnm{\infty}{1} \vnt Q_1)$ satisfying $vx = \psi(x)v$, for all $x \in Q$. Note that $vv^* \in \psi(Q)' \cap p Q_1^\infty p$. If $p\neq vv^*$, then we may apply the previous procedure to the non-zero $Q$-$Q$-bimodule $(p - vv^*) \cdot \cH \cong \cH(\psi(\cdot)(p - vv^*))$. Take a maximal family of non-zero partial isometries $v_i$ inside $p(\Cmatnm{\infty}{1} \vnt Q_1)$ satisfying $\psi(x)v_i = v_i x$ for all $x \in Q$ and such that $v_i v_i^*$ are pairwise distinct orthogonal projections. Consider the projection $r = p- \sum v_i v_i^*$. If $r \neq 0$ then we can apply the previous procedure to the non-zero bimodule $r \cdot \cH$. As above, we get a non-zero partial isometry $w \in r (\Cmatnm{\infty}{1} \vnt Q_1)$ such that $\psi(x)w=wx$, for all $x \in Q$. Then, $ww^*$ is orthogonal to all of the $v_i v_i^*$, which contradicts maximality of the family. So, $p = \sum v_i v_i^*$.  Putting all these partial isometries in a row, we get an element $u \in p (Q_1)^\infty $ such that $u x = \psi(x) u$, for all $x \in Q$ and $uu^*= \sum v_i v_i^* = p$. This proves our claim.

We now prove that every bimodule $\cH$ of $\bimod(Q \subset Q_1)$ is contained in the essential range of $I$. Assume that $\cH$ arises as a $Q$-$Q$-subbimodule of $\Ltwo(Q_1)^{\otimes_Q k}$, for some $k \in \N$. We prove that $\cH$ is a subbimodule of $\Ltwo(Q_1)^\infty$. By Corollary \ref{corol.depth2}, we have that $\cH$ is isomorphic, as $N$-$Q$-bimodule, to a subbimodule of $\eL^2(Q)^\infty$. Writing $\cH \cong \cH(\psi)$, for some finite index inclusion $\psi: Q \ra q Q^n q$, we find a non-zero $N$-central vector $v \in q(\Cmatnm{n}{1} \otimes \Ltwo(Q))$. Taking polar decomposition, we may assume that $v \in q(\Cmatnm{n}{1} \ot Q)$ is a partial isometry satisfying $\psi(x)v = vx$, for all $x \in N$. As a consequence, we have $v^*v \in B$. As in the previous paragraph, take a maximal family of non-zero partial isometries $v_i$ inside $q(\Cmatnm{n}{1} \ot Q)$ satisfying $\psi(x)v_i = v_i x$ for all $x \in N$ and $q = \sum v_i v_i^*$. Putting all partial isometries $v_i$ in one row, we get an element $u \in q(\Cmatnm{n}{\infty} \vnt Q)$ such that $\psi(x)u = u x$ for all $x \in N$ and $uu^* = \sum v_i v_i^* = q$. Define $p = u^* u$ and note that $p \in B^\infty$. Conjugating with $u^*$ from the beginning yields a map $\psi : Q \to p Q^\infty p$ such that $\psi(x) = px$, for all $x \in N$ and still satisfying $\cH \cong \cH(\psi)$.
\end{proof}

Take a finite index inclusion $\psi: Q \ra pQ^{\infty}p$ in $\cC$. Then, we have $p \in B^\infty$. Denote by $\iota : P \vnt B \ra p(P \vnt B)^\infty p$ the inclusion map given by $x \mapsto xp$ on $P$ and by the restriction $\psi|_B$ on $B$. Since $\psi$ preserves $N$, it also preserves $B = N' \cap Q$ and we obtain a map $\iota * \psi :  M \ra pM^\infty p$. If $T \in \mathrm{Hom}_{\cC}(\psi_1,\psi_2)$, then $T \in q B^\infty p$. So, $T$ defines an $M$-$M$-modular map from $\cH(\iota * \psi_1)$ to $\cH(\iota * \psi_2)$. We conclude that the map
\[F_0: \cC \to \bimod(M): \psi \mapsto \cH(\iota * \psi)\]
is a functor.

\begin{proposition}
$F_0$ is a fully faithful tensor \Cstar-functor.
\end{proposition}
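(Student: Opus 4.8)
The plan is to verify the three properties — tensor functoriality, fullness, and faithfulness — in turn, leveraging the fact that $F_0$ is, up to the equivalence of Proposition \ref{prop:representants-for-bimodules}, essentially obtained from $I$ by "amalgamating $P \vnt B$ along $N \vnt B$". First I would check that $F_0$ is a $*$-functor: a morphism $T \in \Hom_{\cC}(\psi_1,\psi_2)$ lies in $qB^\infty p \subset q M^\infty p$ and commutes with $(\iota * \psi_1)(x)$ and $(\iota * \psi_2)(x)$ for $x$ in a generating set of $M$, namely for $x \in Q$ (where it comes from $T\psi_1(x) = \psi_2(x)T$) and for $x \in P \vnt B$ (where $\iota$ acts as $x \mapsto xp$ on $P$ and as $\psi_i|_B$ on $B$, so that $T(xp) = (xq)T$ using $T \in qB^\infty p$ and $B$-bimodularity together with $T\psi_1|_B(b) = \psi_2|_B(b)T$). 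Hence $T$ defines an $M$-$M$-bimodular map $\cH(\iota*\psi_1) \to \cH(\iota*\psi_2)$, compatibly with composition and adjoints. For the monoidal structure, I would unwind $\cH(\rho) \ot_M \cH(\psi) \cong \cH((\id \ot \psi)\rho)$ (recalled in Section \ref{sec.fusion}) and check that $(\iota * \psi_1) \otimes_{\bimod(M)} (\iota * \psi_2)$ corresponds to $\iota * (\psi_1 \otimes_{\cC} \psi_2) = \iota * ((\id \otimes \psi_2)\circ\psi_1)$, which is a direct computation since $\iota$ and $\psi$ are compatible on $B$; the unitarity of the associativity and unit constraints is inherited.

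Faithfulness is immediate: if $F_0(T) = 0$ as a map $\cH(\iota*\psi_1) \to \cH(\iota*\psi_2)$, then in particular $T = 0$ as an element of $qM^\infty p$, hence $T = 0$ in $qB^\infty p = \Hom_{\cC}(\psi_1,\psi_2)$. Fullness is the substantive point. Given an $M$-$M$-bimodular map $S : \cH(\iota*\psi_1) \to \cH(\iota*\psi_2)$, i.e. $S \in q'M^\infty p' $ with $S(\iota*\psi_1)(x) = (\iota*\psi_2)(x)S$ for all $x \in M$, I must show $S$ lies in $qB^\infty p$ and intertwines $\psi_1,\psi_2$ as maps out of $Q$. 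Right $M$-modularity already forces $S \in M^\infty$ (more precisely $S \in q' M^\infty p'$ with $p' = \iota*\psi_1(1)$, $q' = \iota*\psi_2(1)$, which coincide with $p$, $q$ after identifying the amplifications). Restricting the intertwining relation to $x \in Q \subset M$ recovers $S\psi_1(x) = \psi_2(x)S$, so once I know $S \in B^\infty$ I am done, since then $S \in \Hom_{\cC}(\psi_1,\psi_2)$ and $F_0(S) = S$ by construction.

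The main obstacle is therefore to prove $S \in q B^\infty p$, i.e. that $S$, a priori only in $q M^\infty p$, in fact lies in the much smaller corner $B^\infty = (N' \cap Q)^\infty$. Here is where the amalgamated free product structure $M = (P \vnt B) *_{N \vnt B} Q$ and the hypotheses of Theorem \ref{thm.main} enter. Restricting the intertwining relation to $x \in N \subset M$ gives $S \cdot xp = xq \cdot S$, so $S$ is an $N$-central vector; since $N \subset M$ is (irreducible and) quasi-regular and the relevant bimodules have finite index, one analyzes the $N$-$N$-bimodule generated by the matrix entries of $S$ inside $\rL^2(M^\infty)$. I expect the argument to proceed exactly as in the proof of Proposition \ref{prop:representants-for-bimodules}: decompose this finite-index $N$-$N$-bimodule, use Theorem \ref{IPP2} (control of quasi-normalizers in amalgamated free products) together with the freeness of $\ANFAlg(N \subset P)$ and $\FAlg(N \subset Q)$ inside $\FAlg(N)$ to force every such bimodule appearing in $\rL^2(M)$ but almost-normalizing both sides to sit inside $\rL^2(Q)$ — indeed inside $\rL^2(B)^\infty$ — and conclude via the hypothesis that $N_0$ does not embed into any amplification of $Q$ that no exotic pieces survive. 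Concretely I would show the entries of $S$ lie in the quasi-normalizer of $N^\infty$ inside $M^\infty$, that by Theorem \ref{IPP2} and freeness this quasi-normalizer meets $\rL^2(M)$ only in $\rL^2(Q)$-type pieces, and that $N$-centrality then pins $S$ down to $(N' \cap Q)^\infty = B^\infty$, giving $S \in qB^\infty p$ as required.
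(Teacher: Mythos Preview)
Your faithfulness and tensor-functor checks are fine, but you have significantly overcomplicated the fullness argument and missed the key observation. You restrict the intertwining relation only to $x \in N$, obtaining $N$-centrality of $S$, and then propose to invoke freeness of the fusion algebras, Theorem \ref{IPP2} applied to $N$, and even the hypothesis that $N_0$ does not map into any amplification of $Q$. None of this is needed here.

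The point you overlooked is that $\iota$ satisfies $\iota(x) = xp$ for \emph{all} $x \in P$, not merely for $x \in N$. Restricting the intertwining relation $S\,(\iota * \psi_1)(x) = (\iota * \psi_2)(x)\,S$ to $x \in P$ therefore gives $S\,xp = xq\,S$, i.e.\ $S$ commutes with all of $P$ inside $M^\infty$. Since $P' \cap M = B$ (this follows from $N \subset P$ being irreducible of infinite index, hence $P \not\prec_{P \vnt B} N \vnt B$, together with Theorem \ref{IPP2} applied to $P \subset P \vnt B$), one obtains $S \in qB^\infty p$ in one stroke. That is the paper's proof: right $M$-modularity gives $S \in M^\infty$, $P$-centrality forces $S \in B^\infty$, and the $Q$-intertwining relation is then automatic. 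The freeness hypothesis and the assumption on $N_0$ play no role in this proposition; they are reserved for the essential surjectivity of $F$ in Section \ref{sec:F-surjective}.
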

\begin{proof}
It is clear that $F_0$ is faithful. We first prove that $F_0$ is full. Take $T \in \Hom_{M\text{-}M} (\cH(\iota * \psi_1), \cH(\iota * \psi_2))$.  Then $T: p \Ltwo(M)^\infty \ra q \Ltwo(M)^\infty$ is right $M$-modular, hence $T \in p M^\infty q$.  Since $Txp = xqT$ for all $x \in P$, we have $T \in p B^\infty q$.  So $T$ is in the image of $F_0$. The functor $*$ on both $\bimod(Q \subset Q_1)$ and $\bimod(M)$ is given by $T \mapsto T^*$, so $F$ is a \Cstar-functor. Since $\cH(\psi_1) \otimes_M \cH(\psi_2) \cong \cH((\id \otimes \psi_2)\circ \psi_1)$, it follows immediately that $F_0$ is a tensor $\Cstar$-functor.
\end{proof}

Now let $G: \bimod(Q \subset Q_1) \ra \cC$ be an inverse functor for the inclusion $I: \cC \ra \bimod(Q \subset Q_1)$. We define the fully faithful tensor \Cstar-functor $F = F_0 \circ G: \bimod(Q \subset Q_1) \ra \bimod(M)$.

\subsection{Proof of Theorem \ref{thm.main}: essential surjectivity of $F$}
\label{sec:F-surjective}

We give a series of preliminary lemmas before proving that the functor $F$ constructed in the previous section is essentially surjective.

\begin{lemma}\label{lem:fi-N-bimodule}
Let $\bim{M}{\cH}{M}$ be a finite index $M$-$M$-bimodule and $\bim{P \vNtensor B}{\cK}{P \vNtensor B} \subset \bim{P \vNtensor B}{\cH}{P \vNtensor B}$ be a finite index $P \vNtensor B$-$P \vNtensor B$-subbimodule. Then $\cK$ contains a non-zero $N$-$N$-subbimodule $\cL$ such that $\dim_{\, \text{-} N}(\cL) < + \infty$.
\end{lemma}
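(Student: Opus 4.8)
First I record that $N\subset P\vNtensor B$ is quasi-regular: every $b\in B$ commutes with $N=N\ot 1$, so $B\cup\QN_{P}(N)$ lies in $\QN_{P\vNtensor B}(N)$ and generates $P\vNtensor B$; moreover by $(\cP_1)$ the relative commutant is $N'\cap(P\vNtensor B)=(N'\cap P)\vNtensor B=B$. Consequently $\rL^2(P\vNtensor B)$, and hence any amplification $\rL^2((P\vNtensor B)^m)=\M_m(\CC)\ot\rL^2(P\vNtensor B)$, decomposes as an $N$-$N$-bimodule into an orthogonal direct sum of irreducible finite index $N$-$N$-bimodules; in particular every non-zero $N$-$N$-subbimodule of such a space contains a non-zero $N$-$N$-subbimodule of finite right $N$-dimension. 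Next write the finite index $P\vNtensor B$-$P\vNtensor B$-bimodule $\cK$ as $\cK\cong\cH(\theta)$ for a finite index inclusion $\theta:P\vNtensor B\to p(P\vNtensor B)^m p$ with $m<\infty$. Under this identification an $N$-$N$-subbimodule $\cL\subset\cK$ with $\dim_{\,\text{-}N}(\cL)<\infty$ is exactly a non-zero finite-right-$N$-dimensional $\theta(N)$-$N$-subbimodule of $p\,\rL^2((P\vNtensor B)^m)$, so by Theorem \ref{thm:intertwining} (and the remark after it) the lemma is \emph{equivalent} to the intertwining $\theta(N)\prec_{P\vNtensor B}N$, where $N$ is embedded diagonally in $(P\vNtensor B)^m$.

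\textbf{Producing the intertwining.} Here I use that $\cK$ lives inside the finite index $M$-$M$-bimodule $\cH$ and the free product decomposition $M=(P\vNtensor B)*_{N\vNtensor B}Q$. Realizing $\cH\cong\cH(\Psi)$ with $\Psi:M\to qM^\infty q$ a finite index inclusion and $q$ of finite trace, $\cK$ becomes a $P\vNtensor B$-$P\vNtensor B$-subbimodule of $q\,\rL^2(M)^\infty$ with $P\vNtensor B$ acting on the left through $\Psi$ and on the right by multiplication, of finite right $P\vNtensor B$-dimension. Since $N\subset P$ is irreducible, quasi-regular and of infinite index, $P\vNtensor B\not\prec_{P\vNtensor B}N\vNtensor B$. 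Now I run the amalgamated free product rigidity: the property-$(T)$ subfactor $N_0\subset P$ is rigid in any finite von Neumann algebra containing (an isomorphic copy of) it, so by Theorem \ref{IPP1} it intertwines into $P\vNtensor B$ or into $Q$, and the second alternative is excluded by the hypothesis of Theorem \ref{thm.main} that $N_0$ has no non-trivial $*$-homomorphism into an amplification of $Q$. Feeding this into Theorem \ref{IPP2} applied with $M_0=P\vNtensor B$, one shows that the finite-right-$P\vNtensor B$-dimensional subbimodule $\cK$ is, after a unitary conjugation inside $(P\vNtensor B)^\infty$, contained in an amplification of $\rL^2(P\vNtensor B)$; concretely, $\cK\cong\cH(\theta)$ with $\theta:P\vNtensor B\to p(P\vNtensor B)^m p$ and, crucially, $\theta(x)=xp$ for all $x\in N$ — the exact analogue, for $P\vNtensor B\subset M$, of Proposition \ref{prop:representants-for-bimodules}. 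With this normalization $p\in\M_m(B)$ commutes with $N$, the left $N$-action on $\cK=p\,\rL^2((P\vNtensor B)^m)$ is by left multiplication, so $\cK$ is a non-zero $N$-$N$-subbimodule of $\rL^2((P\vNtensor B)^m)$; the decomposition from the first paragraph then supplies the required $\cL$. (Equivalently, the normalization says precisely $\theta(N)\prec_{P\vNtensor B}N$.)

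\textbf{Main obstacle.} Everything hinges on the step of the second paragraph: combining Theorems \ref{IPP1} and \ref{IPP2} with the property-$(T)$ and no-homomorphism hypotheses of Theorem \ref{thm.main} to control the \emph{left} $N$-action on $\cK$ — and to normalize it to $\theta(x)=xp$ — even though $\cK$ is only a $P\vNtensor B$-$P\vNtensor B$-subbimodule of $\cH$, not an $M$-$M$-bimodule, so the left $M$-action on $\cH$ does not preserve it. The technical points to get right are: transporting $P\vNtensor B$ along the finite index inclusion $\Psi$ into $qM^\infty q$ while preserving the non-intertwining $P\vNtensor B\not\prec N\vNtensor B$; handling the amplifications (passing to $pM^\infty p$ with $p$ of finite trace and approximating by finite corners in order to apply Theorems \ref{IPP1}--\ref{IPP2} as stated); and passing from $\rL^2(M)$ to the general finite index $M$-$M$-bimodule $\cH$. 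If one only manages to extract the weaker $\theta(N)\prec_M N$ instead of $\theta(N)\prec_{P\vNtensor B}N$, the conclusion can still be recovered inside $\cK$ by applying the orthogonal projection $\cH\to\cK$, which is $P\vNtensor B$-$P\vNtensor B$-bimodular and therefore $N$-$N$-bimodular, to a finite-right-$N$-dimensional $N$-$N$-subbimodule of $\cH$ produced by that weaker intertwining — provided this projection is non-zero on it, which requires a small additional argument.
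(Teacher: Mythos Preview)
Your reduction in the first paragraph is correct: the lemma amounts to $\theta(N)\prec_{P\vNtensor B}N$ for $\cK\cong\cH(\theta)$. The gap is in the second paragraph, where ``feeding $\Psi(N_0)\prec_M P\vNtensor B$ into Theorem \ref{IPP2}'' is asserted to produce the normalization $\theta(x)=xp$ for $x\in N$. This does not follow. Theorem \ref{IPP1} applied to $\Psi(N_0)$ gives information about $\Psi$, not about $\theta$; the link between the two goes through a partial isometry $v_0$ with $\Psi(x)v_0=v_0\theta(x)$ for $x\in P\vNtensor B$, and even then the conclusion $\Psi(N_0)\prec_M P\vNtensor B$ restricted to $\cK$ only says $\theta(N_0)\prec_{P\vNtensor B}P\vNtensor B$, which is vacuous. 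There is no ``analogue of Proposition \ref{prop:representants-for-bimodules}'' for $P\vNtensor B\subset M$: that proposition rests on the depth 2 hypothesis for $N\subset Q$, and $P\vNtensor B\subset M$ has no such structure. Most tellingly, you never invoke the hyperfiniteness of $B$, and that hypothesis is exactly what the argument needs.

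The paper's proof has two genuine ideas that your outline misses. First, one proves $\theta(N_0)\prec_{P\vNtensor B}P$ --- stripping off the $B$-factor --- by approximating the hyperfinite $B$ by finite dimensional $A_n$ and using property (T) of $N_0$ via Theorem \ref{thm:characterization-property-T}: the conditional expectations $\id\otimes\rE_{A_n}$ converge uniformly on $(\theta(N_0))_1$, which forces $\theta(N_0)\prec_{P\vNtensor B}P\otimes A_n$ for some $n$, hence into $P$. This yields $\pi:N_0\to rP^lr$ and $v_1$ with $\theta(x)v_1=v_1\pi(x)$. Second, one proves $\pi(N)\prec_P N$ by contradiction, and it is here that the ambient $M$-bimodule $\cH$ enters: if $\pi(N)\not\prec_{P\vNtensor B}N\vNtensor B$, Theorem \ref{IPP2} confines the quasi-normalizer of $\pi(N)$ in $rM^lr$ to $r(P\vNtensor B)^lr$; but $v=v_0v_1$ satisfies $\Psi(x)v=v\pi(x)$ for $x\in N_0$, so $v^*v\in r(P\vNtensor B)^lr$, and then quasi-regularity of $N\subset M$ forces $v^*\Psi(M)v\subset r(P\vNtensor B)^lr$. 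This would make $P\vNtensor B\subset M$ a finite index inclusion, impossible in a nondegenerate amalgamated free product. The resulting intertwiner $w$ for $\pi(N)\prec_P N$ is then combined with $v_1$ (the normalization $r=\supp\rE_P(v_1^*v_1)$ guarantees $v_1w\neq 0$) to exhibit the desired $\cL\subset\cK$. Neither of these two steps --- the hyperfiniteness/property (T) argument and the finite-index contradiction --- appears in your proposal.
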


\begin{proof}
Let $\psi : M \to p M^n p$ and $\vfi : P \vNtensor B \to q (P \vNtensor B)^k q$ be finite index inclusions such that $\bim{M}{\cH}{M} \cong \bim{M}{\cH(\psi)}{M}$ and $\bim{P \vNtensor B}{\cK}{P \vnt B} \cong \bim{P \vnt B}{\cH(\vfi)}{P \vnt B}$. Take a non-zero partial isometry $v_0 \in p(\rM_{n,k}(\C) \ot M)q$ such that $\psi(x)v_0 = v_0 \vfi(x)$, for all $x \in P \vNtensor B$.  We have $v_0^*v_0 \in \vfi(P \otimes B)' \cap qM^kq$, so the support projection $\supp \rE_{P \vnt B}(v_0^*v_0)$ lies in $\vfi(P \vnt B)' \cap q(P \vnt B)^k q$.  Moreover $v_0 (\supp \rE_{P \vnt B}(v_0^*v_0)) = v_0$. So we can assume that $q = \supp \rE_{P \vnt B}(v_0^*v_0)$.

We claim that $\vfi(N_0) \prec_{P \vnt B} P$. Recall that $B$ is hyperfinite, by assumption. Let $\bigcup_n A_n$ be the dense union of an increasing sequence of finite dimensional von Neumann subalgebras $A_n$ of $B$. Since $P \otimes 1 \subset P \otimes A_n$ is a finite index inclusion for every $n$, it suffices to show that $\vfi(N_0) \prec_{P \vnt B} P \otimes A_n$ for some $n$. Denote by $\rE_n$ the trace-preserving conditional expectation of $B$ onto $A_n$. Then the sequence of unital completely positive maps $\id \otimes \rE_n$ on $(P \vnt B)^k$, still denoted by $\rE_n$, converges pointwise in $\twonorm$ to the identity. Since $N_0$ has property (T) (see $(\cP_3)$), Theorem \ref{thm:characterization-property-T} shows that $(\rE_n)$ converges uniformly in $\twonorm$ on $(\vfi(N_0))_1$. Take $n \in \NN$ such that $\|\rE_n(x) - x\|_2 < 1/2$ for all $x \in \vfi(N_0)$. Assume that $\vfi(N_0) \not \prec_{P \vnt B} P \otimes A_n$. By Theorem \ref{thm:intertwining}, there is a sequence of unitaries $u_k \in \cU(\vfi(N_0))$ such that for all $x,y \in q(P \vnt B)^k q$, we have $\|\rE_n(xu_ky)\|_2 \ra 0$ for $k \ra \infty$. In particular,
\[1 = \|u_k\|_2 < 1/2 + \|\rE_n(u_k)\|_2 \ra 1/2 \eqcomma\]
 which is a contradiction. We have proved our claim.

This yields a $*$-homomorphism $\pi: N_0 \ra r P^l r$ and a non-zero partial isometry $v_1 \in q(\rM_{k,l}(\C) \ot P\vnt B)r$ such that $\vfi(x)v_1 = v_1 \pi(x)$, for all $x \in N_0$. Similarly to the first paragraph, we can assume that $r = \supp E_P(v_1^*v_1)$. Note that $\rE_{P \vnt B}(v_0^*v_0) = q$. So $v = v_0 v_1 \in p(\rM_{n,l} \otimes M)r$ is a non-zero partial isometry. Moreover, we have $v\pi(x) = \psi(x)v$, for all $x \in N_0$.

We claim that $\pi(N) \prec_{P} N$. We first prove that it suffices to show that $\pi(N) \prec_{P \vnt B} N \vnt B$. Indeed, if this the case, we get a $*$-homomorphism $\theta : N \to t (N \vnt B)^j t$ and a non-zero partial isometry $u \in r(\M_{l,j}(\C) \ot P \vnt B)t$ such that $\pi(x)u = u \theta(x)$, for all $x \in N$. Denote by $u_i$ the $i$-th column of $u$. Then the closed linear span of $\{u_i N \vnt B \, | \, i = 1, \dotsc ,j\}$ defines a non-zero $\pi(N)$-$N \vnt B$-subbimodule of $r(\CC^l \otimes \Ltwo(P \vnt B))$ with finite right dimension. Using the $N$-$N$-modular projection onto $r(\CC^l \otimes \Ltwo(P))$ and the action of $B$, we find a non-zero $\pi(N)$-$N$-bimodule inside $r(\CC^l \otimes \Ltwo(P))$ which is finitely generated as a right $N$-module. 

Assume now that $\pi(N) \not \prec_{P \vnt B} N \vnt B$.  Then, Theorem \ref{IPP2} implies that the quasi-normalizer of $\pi(N)$ in $r M^l r$ sits inside $r (P \vnt B)^l r$. As a consequence, $v^*v \in \pi(N)' \cap r M^l r \subset r (P \vnt B)^l r$. Since the inclusion $N \subset M$ is quasi-regular, we have that
\begin{equation} \label{eq.psi}
v^* \psi(M) v \subset r (P \vnt B)^l r\; \text{.}
\end{equation}
Denote by $A$ the von Neumann algebra generated by $\psi(M)$ and $vv^*$. Then $\psi(M) \subset A \subset p M^n p$ and $A \subset p M^n p$ has finite index. Using \eqref{eq.psi}, we get that $v^*Av \subset v^*v  (P \vnt B)^l v^* v \subset v^* M^n v$, from which we deduce that $P \vnt B \subset M$ has finite index. We get a contradiction. Indeed, $M$ being an amalgamated free product, we can find in $\Ltwo(M)$ infinitely many pairwise orthogonal $P \vnt B$-$P \vnt B$-bimodules by means of alternating powers of $\Ltwo(P \vnt B) \ominus \Ltwo(N \vnt B)$ and $\Ltwo(Q) \ominus \Ltwo(N \vnt B)$.

The previous claim yields a $*$-homomorphism $\rho: N \rightarrow s N^m s$ and a non-zero partial isometry $w \in r(\M_{l,m}(\C) \ot P)s$ such that $\pi(x)w = w \rho(x)$ for all $x \in N$. Denote by $w_i$ the $i$-th column of $w$. Define $\cL$ as the closed linear span of $\{v_1w_iN \, | \, i = 1, \dotsc ,m\}$. Then, $\cL$ is a non-zero, since $\rE_P(w^*v_1^*v_1w) = w^*\rE_P(v_1^*v_1)w$ and $r = \supp \rE_P(v_1^*v_1)$. So $\cL$ is a non-zero $\vfi(N)$-$N$-subbimodule of $\cK$ with finite right dimension.
\end{proof}

\begin{lemma}\label{lem:contained-in-P}
Let $\cK$ be a finite index $P \vNtensor B$-$P \vNtensor B$-subbimodule of a finite index $M$-$M$-bimodule $\cH$ and let $\bim{N}{\cL}{N} \subset \bim{N}{\cK}{N}$ be an irreducible finite index $N$-$N$-subbimodule. Then $\bim{N}{\cL}{N}$ is isomorphic to a subbimodule of $\bim{N}{\Ltwo(P)}{N}$.
\end{lemma}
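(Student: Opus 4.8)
Write $\cH \cong \cH(\psi)$ for a finite-index inclusion $\psi\colon M \to pM^np$ and $\cK \cong \cH(\vfi)$ for a finite-index inclusion $\vfi\colon P \vNtensor B \to q(P \vNtensor B)^k q$; the inclusion $\cK \subset \cH$ is then implemented by a partial isometry $v_0 \in p(\rM_{n,k}(\C) \ot M)q$ with $\psi(x)v_0 = v_0 \vfi(x)$ for all $x \in P \vNtensor B$. Since $\cL \subset \cK$ is an irreducible finite-index $N$-$N$-subbimodule, it suffices to prove that the whole restriction $\bim N{\cK}N$ is isomorphic to a subbimodule of $(\bim N{\Ltwo(P)}N)^\infty$: then $\cL$ occurs in $(\bim N{\Ltwo(P)}N)^\infty$, hence, being irreducible of finite index, already in $\bim N{\Ltwo(P)}N$.

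\textbf{The two intertwinings, following Lemma \ref{lem:fi-N-bimodule}.} First I would re-run the property-(T) argument of Lemma \ref{lem:fi-N-bimodule}: writing $B$ as the closure of an increasing union $\bigcup_n A_n$ of finite-dimensional subalgebras, the maps $\id \ot \rE_n$ on $(P \vNtensor B)^k$ converge to the identity pointwise in $\twonorm$, but, by property (T) of $N_0$ (property $(\cP_3)$) and Theorem \ref{thm:characterization-property-T}, uniformly on $(\vfi(N_0))_1$; hence $\vfi(N_0 \ot 1) \prec_{P \vNtensor B} P \ot 1$ by Theorem \ref{thm:intertwining}. Because $N_0$ has property (T) this embedding is \emph{full}, so a maximal-family/exhaustion argument shows that the $N_0$-$(P \vNtensor B)$-bimodule underlying $\cK$ has the form $\cH(\pi)$ for a $*$-homomorphism $\pi\colon N_0 \to rP^\infty r$ with $r$ a projection in $P^\infty$; in particular the left $N_0$-action, and so the left $N$-action, on $\cK$ only ``sees'' $P$. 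Next, exactly as in Lemma \ref{lem:fi-N-bimodule} — now using the partial isometry built from $v_0$ and the above exhaustion, Theorem \ref{IPP2}, the quasi-regularity of $N \subset M$, and the fact that $P \vNtensor B$ cannot have finite index in the non-trivial amalgamated free product $M$ — one deduces $\pi(N) \prec_{P \vNtensor B} N \vNtensor B$, and then, compressing with the $B$-action onto the $\Ltwo(P)$-component, $\pi(N) \prec_{P} N$; again this can be pushed to a full embedding $\pi(N) \prec_P^f N$.

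\textbf{Conclusion and the main obstacle.} The intertwiner realizing $\pi(N) \prec_P N$ has all its entries in $P$; the point is that, using $N \subset P$ quasi-regular and irreducible (properties $(\cP_1)$ and $(\cP_5)$) together with Popa's intertwining/conjugacy machinery (in the spirit of Theorem \ref{thm.intertw.vaes}), one can arrange it to land on the \emph{standard} copy of $N$ inside a corner of $P^\infty$. After conjugating by the resulting unitary, the columns of the composed intertwiner exhibit $\bim N\cK N$ as a subbimodule of the trivial (amplified) $P$-$P$-bimodule $\bim P{\Ltwo(P)^\infty}P$ restricted to the standard $N$-$N$-action, that is, $\bim N\cK N \hookrightarrow (\bim N{\Ltwo(P)}N)^\infty$; by the reduction in the first paragraph this finishes the proof. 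The main obstacle — the only genuinely new point beyond Lemma \ref{lem:fi-N-bimodule} — is this last upgrade: a bare embedding $\prec_P$ does not by itself determine the isomorphism class of the resulting $N$-$N$-bimodule, so one must promote the intertwinings to full embeddings and then use the irreducibility/quasi-regularity of $N \subset P$ to pin $\pi(N)$ down to the standard copy of $N$; the absence of non-trivial finite-dimensional unitary representations of $\Gamma$ (property $(\cP_4)$) is what excludes the remaining spurious finite-index pieces.
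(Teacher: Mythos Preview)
Your approach diverges fundamentally from the paper's and has a genuine gap at the decisive step.

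\textbf{What the paper does.} The paper argues by contradiction and uses the \emph{freeness hypothesis} (one of the four standing assumptions of Theorem~\ref{thm.main}) in an essential way. Assuming $\cL$ is not a subbimodule of $\Ltwo(P)$, one notes that $\cL \in \ANFAlg(N \subset P)$ and then tensors $\cL \cdot M \subset \cH$ on the left by alternating words $(\cL^P \otimes_N \cL^Q)^{\otimes n}$ with $\cL^P \subset \Ltwo(P)$, $\cL^Q \subset \Ltwo(Q)$ non-trivial irreducible. Freeness forces these $N$-$M$-subbimodules of $\cH$ to be pairwise disjoint, each with right $M$-dimension bounded below, contradicting $\dim_{\text{-}M}(\cH) < \infty$.

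\textbf{The gap in your argument.} You never invoke freeness; instead you try to upgrade the conclusion of Lemma~\ref{lem:fi-N-bimodule} to an embedding $\bim{N}{\cK}{N} \hookrightarrow \bim{N}{\Ltwo(P)^\infty}{N}$ with the \emph{standard} left $N$-action. This last step is precisely where the proof breaks. Even granting the exhaustion for $N_0$ and the resulting $\pi\colon N_0 \to rP^\infty r$, the remaining task is to show that $\bim{\pi(N)}{r\Ltwo(P)^\infty}{N}$ embeds in $\bim{N}{\Ltwo(P)^\infty}{N}$, and none of the tools you cite accomplish this:
\begin{itemize}
  \item The claim ``$\pi(N) \prec_P^f N$'' is asserted without proof. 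Unlike $N_0$, the algebra $N = \rL_{\Omega_\alpha}(\ZZ^6)$ is hyperfinite and has no property~(T), so the rigidity argument you used for $N_0$ does not apply to corners of $\pi(N)$.
  \item Theorem~\ref{thm.intertw.vaes} is inapplicable: its hypothesis requires every finite-right-dimensional $N$-$N$-subbimodule of $\Ltwo(P)$ to lie in $\Ltwo(N)$, which is false since $N \subset P$ is quasi-regular and $\Ltwo(P)$ is full of finite-index $N$-$N$-bimodules.
  \item Property~$(\cP_4)$ is about finite-dimensional representations of $\Gamma$; it is used in Lemma~\ref{lem:trivial-P-bimodule} to trivialize the $P$-action \emph{after} one already knows (via the present lemma) that all irreducible $N$-$N$-pieces sit in $\Ltwo(P)$. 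It does not help locate $N$-$N$-bimodules inside $\Ltwo(P)$.
\end{itemize}
In fact, the statement you are reaching for --- that $\bim{P}{\cK}{P}$ is a multiple of the trivial bimodule, hence $\bim{N}{\cK}{N}$ embeds in $\Ltwo(P)^\infty$ --- is exactly Lemma~\ref{lem:trivial-P-bimodule}, whose proof \emph{uses} Lemma~\ref{lem:contained-in-P}. So the proposed route is circular. The missing idea is the freeness argument: without it there is no mechanism to rule out $N$-$N$-subbimodules of $\cK$ that lie outside $\FAlg(N \subset P)$.
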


\begin{proof}
Assume, by contradiction,  that $\cL$ is not contained in $\bim{N}{\Ltwo(P)}{N}$. Take some non-trivial finite index irreducible $N$-$N$-bimodule $\cL^Q$ in $\Ltwo(Q)$ and some non-trivial finite index irreducible $N$-$N$-bimodule $\cL^P$ in $\Ltwo(P)$ both with right dimension greater than or equal to $1$. Denote by $\cX_0$ the $\twonorm$-closure of $\cL \cdot M$. Lemma \ref{lemm.connes_tensor} implies that $\cX_0$ is a non-zero $N$-$M$-bimodule which is isomorphic to a subbimodule of $\cH$ and lies in $\cL \otimes_N \Ltwo M$. Define the $N$-$M$-bimodules
  \[\cX_n =
    (\cL^P \otimes_N \cL^Q)^{\otimes n}
    \otimes_N \cX_0
    \, \text{.}\]
Note that $\cL \in \ANFAlg(N \subset P)$. By assumption, the fusion algebras $\FAlg(N \subset Q)$ and $\ANFAlg(N \subset P)$ are free inside $\FAlg(N)$. Therefore, as in \cite{falguieresvaes10}, the $\cX_n$ follow pairwise disjoint as $N$-$N$-bimodules and hence pairwise disjoint as $N$-$M$-bimodules.

Decompose $\cX_0 \subset \cH$ as a direct sum of irreducible $N$-$N$-bimodules $\cY_i$. Write $(\cL^Q)^0 = \cL^Q \cap Q$ and $(\cL^P)^0 = \cL^P \cap P$. Then, $(\cL^P)^0 \cdot (\cL^Q)^0 \cdots (\cL^Q)^0 \cdot \cY_i$ is non-zero. If not, we had
  \[M \cdot \cY_i \cdot M
  = M \cdot (\cL^P)^0 \cdot (\cL^Q)^0 \cdots (\cL^Q)^0 \cdot N \cdot \cY_i \cdot M
  = 0 \, \text{,}\]
contradicting the fact that $M$ is a factor. As above, the freeness assumption implies that $(\cL^P \otimes_N \cL^Q)^{\otimes n} \otimes_N \cY_i$ is irreducible. Then, by Lemma \ref{lemm.connes_tensor}, we have that $(\cL^P \otimes_N \cL^Q)^{\otimes n} \otimes_N \cY_i$ sits inside $\cH$ as the $\|\cdot \|_2$-closure of $(\cL^P)^0 \cdot (\cL^Q)^0 \cdots (\cL^Q)^0 \cdot \cY_i$.  We have proven that $\cH$ contains a copy of each $\cX_n$.

Note that $\dim_{\, \text{-} M} \left (\cX_n \right ) > \dim_{\, \text{-} M} \left ( \cX_0 \right)$. As a consequence, $\cH$, as a right $M$-module, has infinite dimension, which is a contradiction.
\end{proof}

\begin{lemma}
\label{lem:trivial-P-bimodule}
Let $\cH$ be a finite index $M$-$M$-bimodule and $\cK$ a finite index $P \vnt B$-$P \vnt B$-subbimodule. Then, $\cK$ is isomorphic to a multiple of the trivial $P$-$P$ bimodule.
\end{lemma}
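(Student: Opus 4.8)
The plan is to combine the two preceding lemmas with an iteration argument. Starting from the finite index $P \vNtensor B$-$P \vNtensor B$-subbimodule $\cK$ of the finite index $M$-$M$-bimodule $\cH$, Lemma \ref{lem:fi-N-bimodule} produces a non-zero $N$-$N$-subbimodule $\cL \subset \cK$ with $\dim_{\, \text{-}N}(\cL) < +\infty$; decomposing $\cL$ into irreducibles we may take $\cL$ irreducible. By Lemma \ref{lem:contained-in-P}, every such irreducible $N$-$N$-subbimodule of $\cK$ embeds into $\bim{N}{\Ltwo(P)}{N}$. The key consequence I want to extract is that $\cK$ itself, as an $N$-$N$-bimodule, is contained in $\bim{N}{\Ltwo(P)^\infty}{N}$: indeed, let $p$ be the projection (in the commutant of the $N$-$N$-action on $\cK$) onto the maximal $N$-$N$-subbimodule of $\cK$ lying in $\bim{N}{\Ltwo(P)^\infty}{N}$; if $p \neq 1$ then $(1-p)\cK$ is a non-zero finite index $N$-$N$-bimodule (hence contains an irreducible finite index $N$-$N$-subbimodule) which, by Lemma \ref{lem:contained-in-P}, embeds into $\Ltwo(P)$, contradicting maximality of $p$. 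Hence $\cK \subset \bim{N}{\Ltwo(P)^\infty}{N}$.

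Next I would upgrade this from an $N$-$N$-statement to the $P \vNtensor B$-$P \vNtensor B$-statement. Write $\cK \cong \cH(\vfi)$ for a finite index inclusion $\vfi : P \vNtensor B \to q(P \vNtensor B)^k q$ with $q \in (P \vNtensor B)^k$, so that $\cK$ as a left $P \vNtensor B$-module, and in particular as a left $P$-module, is a finite multiple of $\Ltwo(P \vNtensor B)$. The containment $\cK \subset \bim{N}{\Ltwo(P)^\infty}{N}$ together with the commuting right action of $B$ shows that $B$ acts on the $N$-$N$-bimodule $\cK$ inside $\Ltwo(P)^\infty$, i.e. the right action of $B$ on $\cK$ is implemented by elements of $\rB_{N\text{-}N}(\Ltwo(P)^\infty)$; since $N \subset P$ is irreducible by $(\cP_1)$, so that $N' \cap P = \C 1$, one identifies $\rB_{N\text{-}N}(\Ltwo(P))$ and tracks where $B$ can land. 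Concretely, the aim is to show that the right $B$-action and the right $P$-action are, up to the isomorphism $\cK \cong \cH(\vfi)$, both carried by $\Ltwo(P)^\infty$ with $B$ acting trivially (as scalars on each copy), which forces $\vfi(B) \subset \C q$ after a unitary conjugation; equivalently $\cK$ is a multiple of $\Ltwo(P)$ as a $P$-$P$-bimodule with the $B$-$B$-action trivial. Some care with amplifications and with the difference between $\Ltwo(P \vNtensor B)$ and $\Ltwo(P) \otimes \Ltwo(B)$ is needed here, but the input is exactly irreducibility of $N \subset P$ plus the fact that $\cK$, being finite index over $P \vNtensor B$, has finite $P$-dimension.

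The main obstacle I anticipate is the second paragraph: passing cleanly from ``$\cK$ sits in $\Ltwo(P)^\infty$ as an $N$-$N$-bimodule'' to ``$\cK$ is a multiple of the trivial $P$-$P$-bimodule''. The subtlety is that a priori the embedding $\cK \hookrightarrow \Ltwo(P)^\infty$ need not be $P \vNtensor B$-bimodular, only $N$-$N$-bimodular, so one must argue that the $P$-$P$-structure and the (trivial) $B$-$B$-structure are recovered from it. I would handle this by using the description of $M$-$N$-bimodules via $*$-homomorphisms from Section \ref{section.fibimodules}: knowing the left $P$-module structure (a multiple of $\Ltwo(P)$) and that every $N$-$N$-subbimodule lies in $\Ltwo(P)^\infty$, decompose $\cK$ into irreducible $P$-$P$-bimodules and show each must be trivial because a non-trivial finite index irreducible $P$-$P$-bimodule would, restricted to $N$-$N$, produce an irreducible $N$-$N$-subbimodule not contained in $\Ltwo(P)$ — using again quasi-regularity/irreducibility of $N \subset P$ together with $(\cP_4)$ and $(\cP_5)$ to rule out non-trivial pieces, much as in the argument of Lemma \ref{lem:contained-in-P}. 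Once each irreducible $P$-$P$-constituent is trivial, and the commuting $B$-$B$-action on a multiple of the trivial $P$-$P$-bimodule is seen to be trivial (again by $N' \cap P = \C 1$ and $B = N' \cap Q$ acting through the $N$-$N$-structure), the conclusion follows.
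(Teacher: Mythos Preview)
Your opening move is right: use Lemmas \ref{lem:fi-N-bimodule} and \ref{lem:contained-in-P} to see that every irreducible finite index $N$-$N$-subbimodule of $\cK$ embeds into $\Ltwo(P)$. (A small slip: $(1-p)\cK$ is not a finite index $N$-$N$-bimodule; one first needs the decomposition of $\cK$ into finite index $N$-$N$-pieces, which follows from quasi-regularity of $N \subset P\vnt B$ as in the paper's first paragraph.) But from this point on your plan has a genuine gap, and also a misreading of the target.

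First, the statement does \emph{not} say the $B$-$B$-action is trivial; it only says $\bim{P}{\cK}{P}$ is a multiple of $\bim{P}{\Ltwo(P)}{P}$. Writing $\cK \cong \cH(\psi)$, the conclusion is $\psi(x) = xp$ for $x\in P$, while $\psi|_B$ may be a genuinely non-trivial finite index inclusion into $pB^\infty p$. So your attempt to force $\vfi(B)\subset \CC q$ is aiming at something false.

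Second, the step you flag as the obstacle is indeed the heart of the matter, and your proposed fix does not work. You want to decompose $\cK$ into irreducible $P$-$P$-bimodules and argue that a non-trivial one would contain an $N$-$N$-piece not lying in $\Ltwo(P)$. There is no reason this holds abstractly, and nothing in Lemma \ref{lem:contained-in-P} or $(\cP_4)$, $(\cP_5)$ gives it to you. The paper proceeds quite differently. From an irreducible $N$-$N$-piece $\cL\subset \Ltwo(P)$ it uses the concrete description $P=\rL_\Omega(\Gamma)$, $N=\rL_\Omega(\Lambda)$ and almost-normality to realize $\cL$ as the closure of $Nu_gN$ and then, via Lemma \ref{lemm.connes_tensor}, to produce inside $\cK$ a copy of the \emph{trivial} $N$-$N$-bimodule $\Ltwo(N)$ (this is a real extra step, not just ``some'' subbimodule of $\Ltwo(P)$). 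A maximality argument then conjugates $\psi$ so that $\psi(x)=xp$ for all $x\in N$ with $p\in B^\infty$. Now for $g\in\Gamma$ one sets $v_g=\psi(u_g)u_g^*$; property $(\cP_5)$ forces $v_g\in\cU(pB^\infty p)$, and using conjugates (Theorem \ref{thm.caracterisatio-bimod}) one shows $\psi(B)\subset pB^\infty p$ has finite index, so $\psi(B)'\cap pB^\infty p$ is finite type I. Hence $g\mapsto v_g$ is a direct integral of finite dimensional unitary representations of $\Gamma$, which is trivial by $(\cP_4)$. This gives $\psi(u_g)=u_gp$ and the lemma follows. None of these concrete ingredients --- the trivial $N$-$N$-piece, the normalization of $\psi$ on $N$, the cocycle $v_g$, the finite type I commutant argument --- appear in your plan, and they are where $(\cP_4)$ and $(\cP_5)$ actually enter.
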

\begin{proof}
By Lemma \ref{lem:fi-N-bimodule}, we have a non-zero $N$-$N$-subbimodule $\cL \subset \cK$ with finite right dimension. Then, for $x,y \in \QN_{P \vnt B}(N)$, the closure of $Nx \cdot \cL \cdot yN$ is still an $N$-$N$-subbimodule of $\cK$ with finite right dimension. Since $N \subset P \vnt B$ is quasi-regular, the linear span of all $N$-$N$-subbimodules of $\cK$ with finite right dimension is dense in $\cK$. Then, a maximality argument shows that $\cK$ can be decomposed as the direct sum $N$-$N$-subbimodules with finite right dimension. By symmetry, $\cK$ decomposes as the direct sum of $N$-$N$-bimodules with finite left dimension. As a consequence, $\cK$ may be written as the direct sum of finite index $N$-$N$-subbimodules.

Let $\cL$ be an irreducible finite index $N$-$N$-subbimodule of $\cK$. Lemma \ref{lem:contained-in-P} shows that $\cL$ is contained in $\Ltwo(P)$. Recall that
\[P = \rL_\Omega(\QQ^3 \oplus \QQ^3 \rtimes \SL(3,\QQ))\quad , \quad N = \rL_\Omega(\ZZ^3 \oplus \ZZ^3) \eqstop \]
Hence, $\cL$ arises as the $\|\cdot \|_2$-closure of $N u_g N$ for some element $g \in \QQ^3 \oplus \QQ^3 \rtimes \SL(3,\QQ)$. By almost-normality (see property $(\cP_1)$ and the remarks following it), take a finite index subgroup $\Lambda_0$ of $\ZZ^3 \oplus \ZZ^3$ such that $\Ad(g) (\Lambda_0) \subset \ZZ^3 \oplus \ZZ^3$. Denote by $\cL_0$ the closure of $Nu_g\rL_\Omega(\Lambda_0)$. Then, $\cL_0$ is an irreducible $N$-$\rL_\Omega(\Lambda_0)$-subbimodule of $\cL$. Note that
\[\cL_0 \underset{\rL_\Omega(\Lambda_0)}{\otimes} \overline{\rL_\Omega(\Lambda_0)u_g^* N}\ \cong \ \Ltwo(N) \, .\]
Lemma \ref{lemm.connes_tensor} implies that $\cK$ contains a copy of the trivial $N$-$N$-bimodule $\eL^2(N)$, realized as the $\|\cdot \|_2$-closure of $\cL_0u_g^*N$.

Write $\cK \cong \cH(\psi)$ for some finite index inclusion $\psi: P \vnt B \ra q(P \vnt B)^\infty q$, where $(\Tr \otimes \tau)(q) < \infty$. By the above paragraph, we can take a trivial $N$-$N$-bimodule inside $\cK$. Then, there is an $N$-central vector $v \in q\Ltwo(P \vnt B)^\infty$. Taking polar decomposition, we may assume that $v$ is a partial isometry in $q(\Cmatnm{\infty}{1} \vnt P \vnt B)$ satisfying $\psi(x)v = vx$, for all $x \in N$. Note that $vv^* \in \psi(N)' \cap q(P \vnt B)^\infty q$. Hence, $(q - vv^*) \cdot \cK$ defines a $N$-$P \vnt B$-subbimodule of $\cK$ and we may apply the previous procedure. As in the proof of Proposition \ref{prop:representants-for-bimodules}, a maximality argument yields a family of partial isometries $v_i \in q(\Cmatnm{\infty}{1} \vnt P \vnt B)$ satisfying $\psi(x) v_i = v_i x$, for all $x \in N$ and such that $\sum {v_iv_i^*} = q$. Putting these partial isometries in a row, we obtain an element $w \in q(P \vnt B)^\infty$ satisfying $ww^* = \sum {v_iv_i^*} = q$. By irreducibility of $N  \subset P$ (see $(\cP_1)$), we have a projection $p = w^*w \in (N' \cap P \vnt B)^\infty = B^\infty$. Conjugating $\psi$ with $w^*$ from the beginning, we obtain a finite index inclusion $\psi: P \vnt B \ra p(P \vnt B)^\infty p$, where $p \in B^\infty$ such that $(\Tr \otimes \tau)(p) < \infty$ and $\psi(x) = xp$ for all $x \in N$ and still satisfying $\cK \cong \cH(\psi)$.

Let $g \in \Gamma = \QQ^3 \oplus \QQ^3 \rtimes \SL(3,\QQ)$ and $\Lambda_0 < \ZZ^3 \oplus \ZZ^3$ be finite index subgroup such that $\Ad(g^{-1})(\Lambda_0) \subset \Lambda$. Denote by $\alpha_{g^{-1}}$ the $*$-homomorphism $\rL_\Omega(\Lambda_0) \ra \rL_\Omega(\Lambda)$ induced by $\Ad(g^{-1})$. For $x \in \rL_\Omega(\Lambda_0)$ we have
\[ \psi(u_g)u_g^*x = \psi(u_g)\alpha_{g^{-1}}(x)u_g^* = \psi(u_g)\psi(\alpha_{g^{-1}}(x))u_g^* = x \psi(u_g)u_g^* \, \text{.}\]
By $(\cP_5)$, we have that $\rL_\Omega(\Lambda_0) \subset P$ is irreducible. As a consequence, $v_g = \psi(u_g)u_g^* \in \cU(pB^\infty p)$. Note that $\psi(B) \subset \psi(N)' \cap p(P \vnt B)^\infty p = pB^\infty p$ and $v_g \in \psi(B)' \cap pB^\infty p$.

We prove that the inclusion $\psi(B) \subset pB^\infty p$ has finite index using Theorem \ref{thm.caracterisatio-bimod}. Consider the conjugate bimodule $\overline{\cK}$ of $\bim{P \vnt B}{\cK}{P \vnt B}$. As proven above, we may write $\overline{\cK} \cong \cH(\overline{\psi})$, where $\overline{\psi}:P \vnt B \ra q(P \vnt B)^\infty q$ is a finite index inclusion satisfying $\overline{\psi}(x) = x q$, for all $x \in N$ and $q \in B^\infty$ is a projection such that $(\Tr \otimes \tau)(q) < \infty$. Note that $\cK \otimes_{P \vnt B} \overline{\cK} \cong \cH((\overline{\psi} \otimes \id) \circ \psi)$. Hence there is a conjugate map $R: \Ltwo(P \vnt B) \ra \cH((\overline{\psi} \otimes \id) \circ \psi)$. Considering $R$ as an element of $(p \otimes q)(\M_{\infty,1}(\C) \vnt P \vnt B)$ we have
\[R \in  (p \otimes q)(\M_{\infty,1}(\C) \vnt P \vnt B) \cap N' = (p \otimes q)(\M_{\infty,1}(\C) \vnt B) \eqstop \]
 Define $\psi_B: B \ra pB^\infty p$ and $\overline{\psi}_B:B \ra q B^\infty q$ as the restrictions of $\psi$ and $\overline{\psi}$ to $B$ and $S: \Ltwo(B) \ra \cH((\overline{\psi}_B  \otimes \id) \circ \psi_B)$ as the restriction of $R$. Similarly, we find an intertwiner $\ol{S}: \Ltwo(B) \ra \cH((\psi_B  \otimes \id) \circ \ol{\psi_B})$, giving a pair of conjugate morphisms for $\cH(\psi_B)$. Then the same argument as in \cite[Lemma 3.2]{longoroberts97} implies that $\psi(B)' \cap pB^\infty p$ is of finite type I.

It follows that $g \mapsto v_g \in \psi(B)' \cap pB^\infty p$ is a direct integral of finite dimensional unitary representations of $\Gamma$ and hence trivial, since $\Gamma$ has no non-trivial finite dimensional unitary representations (see $(\cP_4)$). We conclude that $\psi(u_g) = u_gp$ and that $\bim{P}{\cK}{P}$ is a multiple of the trivial $P$-$P$-bimodule.
\end{proof}

\begin{lemma}\label{la:remark_trivializing_inclusions}
Let $\psi: P \vNtensor B \rightarrow p (P \vNtensor B)^n p$ be a finite index inclusion such that \[\bim{\psi(P \otimes 1)}{\bigl ( p \big (\C^n \ot \rL^2(P \vNtensor B) \big ) \bigl)}{P \otimes 1}\] is a multiple of the trivial $P$-$P$-bimodule. Then there exists a non-zero partial isometry $u \in  \M_{n,\infty}(\CC) \vnt P \vNtensor B$ such that $uu^* = p$, $q = u^*u \in B^\infty$ and $u^*\psi (x)u = qx$ for all $x \in P$, where we consider $P \subset P^\infty$ diagonally.
\end{lemma}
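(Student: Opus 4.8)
Write $\cH := \cH(\psi) = p(\C^n \otimes \rL^2(P \vNtensor B))$, viewed as a $(P \vNtensor B)$-$(P \vNtensor B)$-bimodule in the usual way. The plan is to exploit the right action of $1 \otimes B \subseteq P \vNtensor B$ on $\cH$: being right multiplication, it commutes with the left action of $\psi(P \otimes 1)$, and it also commutes with the right action of $P \otimes 1$; hence it commutes with the entire $P$-$P$-bimodule structure of $\bim{\psi(P \otimes 1)}{\cH}{P \otimes 1}$. By hypothesis this bimodule is a multiple of the trivial $P$-$P$-bimodule, so I would first fix a $P$-$P$-bimodular unitary $V : \rL^2(P) \otimes \cK \to \cH$, where $\cK$ is the (separable) multiplicity space and $P$ acts diagonally on the $\rL^2(P)$-factor.

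Next I would identify the transported right $B$-action. Since $P$ is a factor, the commutant of the $P$-$P$-bimodule structure of $\rL^2(P) \otimes \cK$ equals $1 \otimes \B(\cK)$, so $V^*(\text{right }B\text{-action})V$ has the form $1 \otimes \beta$ for a normal representation $\beta : B \to \B(\cK)$. Thus $\cK$ becomes a normal right $B$-module, and so there are a projection $q_0 \in B^\infty$ and a right-$B$-modular unitary $W : \cK \to q_0 \rL^2(B)^\infty$. Setting $q := 1 \otimes q_0 \in B^\infty$ and $U := V \circ (\id_{\rL^2(P)} \otimes W^*)$, one obtains a unitary $U : q\rL^2(P \vNtensor B)^\infty \to \cH$ that is both right-$P$-modular and right-$B$-modular; since $P \otimes 1$ and $1 \otimes B$ generate $P \vNtensor B$, it is right-$(P \vNtensor B)$-modular, and by construction it intertwines the diagonal left $P$-action on $q\rL^2(P \vNtensor B)^\infty$ with $\psi|_{P \otimes 1}$ on $\cH$.

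Finally, regarding $\cH \subseteq \C^n \otimes \rL^2(P \vNtensor B)$ and $q\rL^2(P \vNtensor B)^\infty \subseteq \rL^2(P \vNtensor B)^\infty$, I would extend $U$ by $0$ to obtain a right-$(P \vNtensor B)$-modular partial isometry $\rL^2(P \vNtensor B)^\infty \to \rL^2(P \vNtensor B)^n$. By the standard identification of right-$(P \vNtensor B)$-modular maps between coordinate modules with matrices over $P \vNtensor B$, this partial isometry is an element $u \in \M_{n,\infty}(\CC) \vNtensor P \vNtensor B$. As its range is $\cH$ and its initial space is $q\rL^2(P \vNtensor B)^\infty$, we get $uu^* = p$ and $u^*u = q \in B^\infty$; and the intertwining property yields $\psi(x)u = u(qx)$, hence $u^*\psi(x)u = qx$, for all $x \in P$ (diagonal in $P^\infty$). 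Finally $u \neq 0$, since $\psi$ is a nonzero $*$-homomorphism.

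The one delicate point I expect is the middle step: checking that the right $B$-action genuinely lives on the multiplicity space $\cK$ (this is exactly where factoriality of $P$ is needed) and that re-gluing the right $P$- and $B$-actions recovers the full right-$(P \vNtensor B)$-module structure, so that the resulting intertwiner lands in $\M_{n,\infty}(\CC) \vNtensor P \vNtensor B$ rather than merely in the larger algebra of right-$P$-modular operators. This is also the only place where the hypothesis "multiple of the \emph{trivial} bimodule" is used in an essential way: it is what forces the left $P$-action to appear diagonally in the final coordinates, so that $q$ lands in $B^\infty$ and $u^* \psi(x) u = qx$.
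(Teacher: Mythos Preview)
Your argument is correct and takes a genuinely different route from the paper's. The paper proceeds by an iterative maximality argument: it extracts one $P$-central vector in $\cH$, takes its polar decomposition to get a partial isometry $v \in p(\C^n \otimes P \vNtensor B)$ with $\psi(x)v = vx$ for $x \in P$, and repeats on the complement $(p - vv^*)\cH$; a maximal family $(v_i)$ with pairwise orthogonal range projections then satisfies $\sum_i v_iv_i^* = p$, and lining the $v_i$ up as columns produces $u$. The fact that $u^*u \in B^\infty$ is deduced only at the very end, from the commutant calculation $(P \otimes 1)' \cap (P \vNtensor B)^\infty = B^\infty$. Your approach is instead global: you fix once and for all a $P$-$P$-bimodular unitary $V: \rL^2(P) \otimes \cK \to \cH$, use factoriality of $P$ to see that the transported right $B$-action is of the form $1 \otimes \beta$, realise $\cK$ as $q_0 \rL^2(B)^\infty$, and assemble $u$ in one stroke. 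This explains structurally \emph{why} $u^*u$ lands in $B^\infty$ (it is literally $q_0$), whereas the paper obtains this a posteriori. The paper's version has the advantage of being parallel to several other maximality arguments in the same section (the proof of Proposition~\ref{prop:representants-for-bimodules} and Lemma~\ref{lem:trivial-P-bimodule}), which keeps the exposition uniform; your version is more conceptual and makes the role of the tensor splitting $P \vNtensor B$ transparent. One small notational point: your ``$q := 1 \otimes q_0 \in B^\infty$'' is slightly ambiguous, since $q_0$ is already in $B^\infty$; what you mean is the image of $q_0$ under the identification $\rL^2(P) \otimes q_0\rL^2(B)^\infty \cong q_0 \rL^2(P \vNtensor B)^\infty$, which is indeed just $q_0$ viewed in $(P \vNtensor B)^\infty$.
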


\begin{proof} Consider the $P$-$P$-bimodule $\cH$ given by
\[\bim{P}{\cH}{P} = \bim{\psi(P \otimes 1)}{\bigl ( p \big (\C^n \ot \rL^2(P \vNtensor B) \big ) \bigl )}{P \otimes 1} \eqstop\]
 Since $\cH$ is a multiple of the trivial $P$-$P$-bimodule, there exists a non-zero vector $v \in p \big (\C^n \otimes \Ltwo(P \vNtensor B) \big )$ such that $\psi(x)v = v x$ for all $x \in P$. Taking its polar decomposition, we may assume that $v$ is a non-zero partial isometry in $p(\C^n \otimes P \vNtensor B)$. As in the proof of Proposition \ref{prop:representants-for-bimodules}, a maximality argument provides a family of non-zero partial isometries $(v_i)$, inside $p(\C^n \otimes P \vNtensor B )$, satisfying $\psi(x)v_i = v_i x$ for all $x \in P$ and such that $p = \sum v_i v_i^*$. Putting all $v_i$ in one row, we get $u \in \M_{n,\infty}(\CC) \vnt P \vNtensor B$. Then $\psi(x)u = ux$, for all $x \in P$. We also have that $uu^* = \sum v_iv_i^* = p$ and $u^*u \in (1 \otimes P \otimes 1)' \cap (P \vNtensor B)^\infty = B^\infty$. Thus, $u$ is the required partial isometry.
\end{proof}

\begin{proof}[Proof of Theorem \ref{thm.main}]
Let $\bim{M}{\cH}{M}$ be a finite index irreducible $M$-$M$-bimodule. We prove that $\cH$ is isomorphic to a bimodule in the range of the functor $F : \bimod(Q \subset Q_1) \to \bimod(M)$, constructed in Section \ref{sec:functor}. We do this in two steps.

\textit{Step 1.} There exists a projection $p \in (P \vNtensor B)^\infty$ with $(\Tr \ot \tau)(p) < +\infty$ and $*$-homomorphism $\psi:M \rightarrow p M^\infty p$ such that
\begin{itemize}
\item $\psi(M) \subset p M^\infty p$ has finite index,
\item $\psi(P \vNtensor B) \subset p(P \vNtensor B)^\infty p$ and this inclusion has essentially finite index and
\item $\bim{M}{\cH}{M} \cong \bim{M}{\cH(\psi)}{M}$.
\end{itemize}

\emph{Proof of Step 1.} Let $\psi:M \rightarrow p M^n p$ be a finite index inclusion such that $\bim{M}{\cH}{M} \cong \bim{M}{\cH(\psi)}{M}$. By symmetry, Theorem \ref{thm.intertw.vaes} and the remarks preceding it, we are left with proving the two following statements.
\begin{enumerate}
\item $\psi(P \vNtensor B)q \prec_{M} P \vNtensor B$, for every projection $q \in \psi(P \vNtensor B)' \cap p M^n p$.
\item Whenever $\cK \subset \rL^2(M)$ is a $(P \vNtensor B)$-$(P \vNtensor B)$-subbimodule with $\dim_{\, \text{-} {P \vnt B}}(\cK) < + \infty$, we have $\cK \subset \Ltwo(P \vNtensor B)$.
\end{enumerate}

By assumption, there is no non-trivial $*$-homomorphism from $N_0$ to any amplification of $Q$. It follows that $\psi(N_0) \not \prec_M Q$. Hence, Theorem \ref{IPP1} implies that $\psi(N_0) \prec_M P \vNtensor B$. So there is a $*$-homomorphism $\varphi:N_0 \rightarrow q(P \vNtensor B)^mq$ and a non-zero partial isometry $v \in p(\M_{n,m}(\C) \ot M)q$ such that $\psi(x)v = v \varphi(x)$ for all $x \in N_0$. We have $v^*v \in \varphi(N_0)' \cap qM^mq$. So $v^*v \in q(P \vNtensor B)^m q$ by Theorem \ref{IPP2}. Then,
\[v^*v (\QN_{qM^mq}(\varphi(N_0))'') v^*v \subset q (P \vNtensor B)^m q \eqcomma\]
 by Theorem \ref{IPP2} again. Since $N_0 \subset P$ is quasi-regular (see $(\cP_2)$), we also have that
\[v^*\psi(P \vNtensor B)v \subset v^*v (\QN_{qM^mq}(\varphi(N_0))'') v^*v \subset q(P \vnt B)^mq \eqstop \]
Note that all the previous arguments remain true when cutting down $\psi$ with a projection in $\psi(P \vNtensor B)' \cap p M^n p$, so we have proven (i). Theorem \ref{IPP2} implies (ii) and Step 1 is proven.

\textit{Step 2.} We may assume that $p \in B^\infty$ and that the $*$-homomorphism $\psi$ satisfies
\begin{itemize}
\item
$\psi(x) = px$, for all $x \in P$,
\item
$\psi(B) \subset B^\infty$,
\item
$\psi(Q) \subset p Q^\infty p$.
\end{itemize}

\emph{Proof of Step 2.} By Step 1, the inclusion $\psi(P \vnt B) \subset p (P \vnt B)^\infty p$ has essentially finite index. Let $q$ be a projection in $\psi(P \vnt B)' \cap p(P \vnt B)^\infty p$ such that $\cK = \bim{\psi(P \vnt B)}{\bigl (q \Ltwo(P \vNtensor B)^\infty \bigr )}{P \vnt B}$ is a finite index $P \vnt B$-$P \vnt B$-subbimodule of $\bim{P \vnt B}{\cH}{P \vnt B}$. Lemma \ref{lem:trivial-P-bimodule} implies that $\bim{P \ot 1}{\cK}{P \ot 1}$ is isomorphic to a multiple of the trivial $P$-$P$-bimodule. Lemma \ref{la:remark_trivializing_inclusions} yields a non-zero partial isometry $u \in q ( \M_{\infty, m}(\C) \ot P \vNtensor B)$ satisfying $u^*\psi(x)u = u^*u x$ for all $x \in P$ and such that $uu^* = q$ and $u^*u \in B^m$. Since $\psi(P \vNtensor B) \subset p(P \vNtensor B)^\infty p$ has essentially finite index, this procedure provides a non-zero partial isometry $u \in (P \vNtensor B)^\infty$ satisfying $u^*\psi(x)u = u^*u x$ for all $x \in P$ with $uu^* = p$ and $u^*u \in B^\infty$. Conjugating $\psi$ with $u^*$ from the beginning, we may assume that $p \in B^\infty$ and $\psi(x) = p x$ for all $x \in P$.

We have $P' \cap M = B$ and $\psi(x) = p x$ for all $x \in P$, with $p \in B^\infty$, therefore $\psi(B) \subset B^\infty$.

Since $p \in (N' \cap Q)^\infty$ and $\psi(x) =  p x$ for all $x \in P$, the $*$-homomorphism $\psi$ extends to an $N$-$N$-bimodular map $v: \rL^2(M) \to \rL^2(p M^\infty p)$. By freeness of $\FAlg(N \subset Q)$ and $\FAlg(N \subset P)$ inside $\FAlg(N)$, we have that $v(\rL^2(Q))$ is an $N$-$N$-subbimodule of $\Ltwo(p Q^\infty p)$. Hence $\psi(Q) \subset p Q^\infty p$, which ends the proof of Step $2$.
\end{proof}

\subsection{Proof of Theorem \ref{thm:construct-bimodule-categories}}
\label{proof.construct-bimodule-categories}

We use the following version of \cite[Theorem 0.2]{nicoarapopasasyk07} for the proof of Theorem \ref{thm:construct-bimodule-categories}.

\begin{theorem}[See {\cite[Theorem 0.2]{nicoarapopasasyk07}}]\label{thm.nicoarapopasasyk}
Let $\Gamma$ be a property (T) group and $M$ a separable II$_1$ factor. Let $J \subset \mathrm{H}^2(\Gamma, \mathrm{S}^1)$ be the set of scalar 2-cocycles $\Omega$ such that there exists a (not necessarily unital) non-trivial $*$-homomorphism from $\rL_\Omega(\Gamma)$ to an amplification of $M$. Then $J$ is countable.
\end{theorem}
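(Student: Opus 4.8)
The plan is a proof by contradiction: assuming $J$ uncountable, I would produce two distinct classes in $J$ whose twisted group factors sit inside a common amplification of $M$ "in almost the same way'', and then use property (T) twice — once to manufacture a finite-dimensional projective representation of $\Gamma$ out of this near-coincidence, and once to see that only countably many cohomology classes admit such a representation. First, for each $\Omega\in J$ fix a non-trivial $*$-homomorphism $\theta_\Omega\colon \rL_\Omega(\Gamma)\to M^\infty$ and set $V^\Omega_g:=\theta_\Omega(u^\Omega_g)$, so that $V^\Omega_gV^\Omega_h=\Omega(g,h)V^\Omega_{gh}$, $(V^\Omega_g)^*V^\Omega_g=V^\Omega_g(V^\Omega_g)^*=p_\Omega:=\theta_\Omega(1)\neq 0$, and $p_\Omega$ has finite trace. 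By a routine amplification argument — stratify $J$ according to the size of the amplification of $M$ that is needed, cut down, and conjugate by partial isometries — I would reduce to the situation where there is one fixed separable II$_1$ factor $\wt M$ such that, for all $\Omega$ in some uncountable $J_0\subseteq J$, the map $\theta_\Omega$ is a unital embedding $\rL_\Omega(\Gamma)\hookrightarrow\wt M$; then each $V^\Omega_g$ is a unitary of $\wt M$. (This bookkeeping is the one genuinely fiddly point, but it is conceptually routine.)

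\textbf{Separability and the Hilbert--Schmidt trick.} Fix a Kazhdan pair $(F_0,\epsilon_0)$ for $\Gamma$ with $e\in F_0$ and put $\delta:=\epsilon_0/3$. Since $\wt M$ has separable predual, $\cU(\wt M)$ is $\|\cdot\|_2$-separable, so the map $\Omega\mapsto(V^\Omega_g)_{g\in F_0}$ sends the uncountable $J_0$ into a separable metric space and there are two \emph{distinct} classes $\Omega,\Omega'\in J_0$ with $\|V^\Omega_g-V^{\Omega'}_g\|_2<\delta$ for all $g\in F_0$. Write $V_g:=V^\Omega_g$, $W_g:=V^{\Omega'}_g$, $\rho:=\Omega\overline{\Omega'}\in Z^2(\Gamma,S^1)$, and on $\rL^2(\wt M)$ define unitaries $\sigma(g)\xi:=V_g\,\xi\,W_g^*$. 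One checks that $\sigma(g)\sigma(h)=\rho(g,h)\sigma(gh)$, so $\sigma$ is a $\rho$-projective unitary representation of $\Gamma$, and the trace vector $\widehat{1}$ satisfies $\|\sigma(g)\widehat{1}-\widehat{1}\|_2=\|V_g-W_g\|_2<\delta$ for $g\in F_0$. Then $\sigma\otimes\overline\sigma$ is a genuine unitary representation of $\Gamma$ on $\rL^2(\wt M)\otimes\overline{\rL^2(\wt M)}\cong\mathrm{HS}(\rL^2(\wt M))$, where it acts by $T\mapsto\sigma(g)T\sigma(g)^*$, and $\widehat{1}\otimes\overline{\widehat{1}}$ is an $(F_0,2\delta)$-almost invariant unit vector. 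By property (T) of $\Gamma$ there is a nonzero Hilbert--Schmidt operator $T$ with $\sigma(g)T=T\sigma(g)$ for all $g$; then $T^*T$ is a nonzero positive compact operator commuting with $\sigma(\Gamma)$, so a spectral projection of $T^*T$ at a nonzero eigenvalue is a nonzero finite-rank $\sigma$-invariant projection, whose range carries a nonzero finite-dimensional $\rho$-projective unitary representation of $\Gamma$. Hence $[\rho]=[\Omega]-[\Omega']$ lies in $\cT:=\{\,[\rho']\in H^2(\Gamma,S^1):\ \Gamma\text{ admits a nonzero finite-dimensional }\rho'\text{-projective unitary representation}\,\}$.

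\textbf{Countability of $\cT$ and conclusion.} A finite-dimensional $\rho'$-projective representation $\Gamma\to\mathrm{U}(n)$ descends to an honest homomorphism $\Gamma\to\mathrm{PU}(n)$, from which $[\rho']$ is recovered as the pullback of the central extension $S^1\to\mathrm{U}(n)\to\mathrm{PU}(n)$; conjugate homomorphisms give the same class, so $\cT$ is the union over $n$ of the images of $\Hom(\Gamma,\mathrm{PU}(n))/\mathrm{PU}(n)$. Since property (T) implies $\Gamma$ is finitely generated, $\Hom(\Gamma,\mathrm{PU}(n))$ is a closed, hence compact, subset of a finite power of $\mathrm{PU}(n)$; and property (T) (Delorme--Guichardet) forces $H^1(\Gamma,\mathrm{Lie}(\mathrm{PU}(n))_{\Ad\circ\pi})=0$ for every $\pi$, so by Weil's rigidity theorem each $\mathrm{PU}(n)$-orbit in $\Hom(\Gamma,\mathrm{PU}(n))$ is open; a compact space partitioned into open orbits has only finitely many. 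Thus each $\Hom(\Gamma,\mathrm{PU}(n))/\mathrm{PU}(n)$ is finite and $\cT$ is countable. To finish, cover the separable image of $J_0$ by countably many balls of radius $\delta/2$; some preimage $J_1\subseteq J_0$ is uncountable, and, fixing $\Omega_0\in J_1$, the previous step applied to any $\Omega,\Omega_0\in J_1$ gives $[\Omega]-[\Omega_0]\in\cT$, so $J_1$ injects into the countable set $[\Omega_0]+\cT$ — a contradiction. Hence $J$ is countable.

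\textbf{Main obstacle.} The conceptual heart is making the Hilbert--Schmidt/property-(T) mechanism and the countability of $\cT$ fit together: one must build $\sigma\otimes\overline\sigma$ so that almost-invariance is genuinely available, harvest a finite-dimensional projective subrepresentation from the compact intertwiner, and then invoke local rigidity of homomorphisms into the compact groups $\mathrm{PU}(n)$. The amplification reduction to a single separable factor $\wt M$ is the other delicate point, though it is purely technical; if one does not wish to reduce to a common corner one must instead check directly that the space of finite-trace partial isometries in $M^\infty$ is $\|\cdot\|_2$-separable and that nearby ones can be conjugated into a common corner.
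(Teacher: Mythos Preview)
Your proof is correct and follows what is essentially the argument of Nicoara--Popa--Sasyk. Note, however, that the present paper does not give its own proof of this statement at all: it simply quotes the result from \cite{nicoarapopasasyk07} and uses it as a black box in the proof of Theorem~\ref{thm:construct-bimodule-categories}. So there is no ``paper's proof'' to compare against beyond the citation.

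For the record, the outline you give is exactly the one underlying the original result: separability of the II$_1$ factor forces two distinct cocycle classes to have nearby projective unitaries on a Kazhdan set; the Hilbert--Schmidt trick turns this into a finite-dimensional projective representation with cocycle $\Omega\overline{\Omega'}$; and Weil's local rigidity combined with $H^1(\Gamma,\pi)=0$ for unitary $\pi$ (Delorme--Guichardet) shows that $\Hom(\Gamma,\mathrm{PU}(n))/\mathrm{PU}(n)$ is finite for each $n$, whence only countably many cohomology classes can arise this way. The one place to be slightly careful is the amplification bookkeeping you flag yourself: you need the traces $\tau(p_\Omega)$ to lie in a common bounded interval before you can conjugate into a fixed corner, and this is exactly what the countable stratification of $J$ provides. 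Everything else is in order.
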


\begin{proof}[Proof of Theorem \ref{thm:construct-bimodule-categories}]
Fix an inclusion of II$_1$ factors $N \subset Q$ and assume that $N$ is hyperfinite. Suppose also that $N \subset Q$ is quasi-regular and has depth 2. Denote by $N \subset Q \subset Q_1$ the basic construction.

Let $\alpha \in \RR-\QQ$ and consider the groups $\Lambda, \Gamma$ and the scalar $2$-cocycle $\Omega_\alpha \in \rZ^2(\Gamma, S^1)$ defined in at the beginning of Section \ref{section.thmC}. Since the group $\ZZ^3 \oplus \ZZ^3 \rtimes \SL(3,\ZZ)$ has property (T), Theorem \ref{thm.nicoarapopasasyk} implies that there are uncountably many $\alpha \in \RR-\QQ$ such that there is no non-trivial $*$-homomorphism from $N_0 = \rL_{\Omega_{\alpha}}\bigl ( \ZZ^3 \oplus \ZZ^3 \rtimes \SL(3,\ZZ) \bigl )$ to any amplification of $Q$. Take one such $\alpha \in \RR - \QQ$. Note that by $(\cP_1)$, $(\cP_3)$ and Lemma \ref{lem.countablefusionalgebra}, the fusion algebra $\cF = \ANFAlg \bigl( \rL_{\Omega_\alpha}(\Lambda) \subset \rL_{\Omega_\alpha}(\Gamma) \bigl)$ is countable.

Observe that $\rL_{\Om_\al}(\Lambda)$ and $N$ are two copies of the hyperfinite II$_1$ factor and take an isomorphism $\theta : N \to \rL_{\Om_\al}(\Lambda)$. Then, the fusion algebra $\cF^\theta$ may be viewed as a fusion subalgebra of $\FAlg(N)$. Since $\FAlg(N \subset Q)$ is a countable fusion subalgebra of $\FAlg(N)$, Theorem \ref{theo.free} allows us to choose $\theta$ such that $\cF^\theta$ is free with respect to $\FAlg(N \subset Q)$. We identify $N$ and $\rL_{\Om_\al}(\Lambda)$ through this isomorphism and all assumptions of Theorem \ref{thm.main} are satisfied. Write $P_\alpha = \rL_{\Omega_\alpha}(\Gamma)$ and write
\[ M_\al = (P_\al \vnt B) \underset{N \vnt B}{*} Q \quad , \quad \text{where} \quad B = N' \cap Q \eqstop\]
Using Theorem \ref{thm.main}, we obtain that $\bimod(M_\alpha) \simeq \bimod(Q \subset Q_1)$.

We prove that stable isomorphism classes of $M_\alpha$, $\alpha \in \RR - \QQ$, are countable. Assume by contradiction that there exists an uncountable subset $J \subset \RR - \QQ$ such that $M_{\alpha_j}$ are pairwise stably isomorphic, for $j \in J$. We find $k \in J$ and an uncountable subset $I \subset J$ such that $M_{\alpha_i}$ embeds (not necessarily unitally) into $M_{\alpha_k}$, for all $i \in I$. In particular, $\rL_{\Omega_{\alpha_i}}\bigl ( \ZZ^3 \oplus \ZZ^3 \rtimes \SL(3,\ZZ) \bigl )$ embeds into $M_{\alpha_k}$, for all $i \in I$. Since $\ZZ^3 \oplus \ZZ^3 \rtimes \SL(3,\ZZ)$ is a property (T) group and cohomology classes of the cocycles
\[ \bigl ( \Omega_\alpha \bigl)|_{\ZZ^3 \oplus \ZZ^3 \rtimes \SL(3,\ZZ)} \quad , \quad \alpha \in \R-\Q\]
are two by two non-equal, this contradicts Theorem \ref{thm.nicoarapopasasyk}.
\end{proof}


\section{Applications}
\label{sec:applications}

\subsection{Examples of categories that arrise as $\bimod(M)$}
\label{sec:examples}

In this part, we give examples of categories that arise as $\Bimod(M)$ of some II$_1$ factor $M$. Note that the results in \cite{vaes09} and in \cite{falguieresvaes10} show that the trivial tensor $\Cstar$-category and the category of finite dimensional representation of every compact, second countable group can be realized as a category of bimodules.

\subsubsection{Finite tensor \Cstar-categories}
\label{sec:finite-categories}

The following reconstruction theorem for finite tensor \Cstar-categories is well known, but for convenience, we give a short proof. We use Jones' planar algebras \cite{jones99} and Popa's reconstruction theorem for finite depth standard invariants \cite{popa90}. See also \cite{bigelowmorrisonpeterssnyder09} for a similar statement.

\begin{theorem}
\label{thm:reconstruct}
Let $\cC$ be a finite tensor \Cstar-category. Then there exists a finite index depth $2$ inclusion $Q \subset Q_1$ of hyperfinite II$_1$ factors such that $\bimod(Q \subset Q_1) \simeq \cC$.
\end{theorem}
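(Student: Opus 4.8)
The plan is to realize $\cC$ as the category of $N$-$N$-bimodules generated by a single finite-depth subfactor, and then to recognize the Jones basic construction of that subfactor. First I would produce, from the finite tensor \Cstar-category $\cC$, a finite-depth subfactor whose standard invariant (principal graph, or equivalently the associated planar algebra) encodes $\cC$. The classical way to do this is: pick a generating object $X$ in $\cC$ (for instance, the direct sum of all simple objects, or $X \oplus \overline{X}$ for a single generator $X$ if $\cC$ is singly generated — in general one takes $X = \bigoplus_i X_i \oplus \overline{X_i}$ over a finite set of generators, so that $X$ is self-conjugate and tensor-generates $\cC$). The tower of $\Hom$-spaces $\Hom_\cC(X^{\otimes n}, X^{\otimes m})$ forms a finite-depth $\lambda$-lattice / subfactor planar algebra in the sense of Jones and Popa, where $\lambda^{-1} = \dim(X)$. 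Since $\cC$ is finite, this planar algebra has finite depth. By Popa's reconstruction theorem \cite{popa90} (or equivalently Jones' planar algebra reconstruction \cite{jones99}, see also \cite{bigelowmorrisonpeterssnyder09}), there is a finite-index, finite-depth inclusion $N \subset P$ of hyperfinite II$_1$ factors whose standard invariant is exactly this $\lambda$-lattice. In particular, the tensor \Cstar-category of $N$-$N$-bimodules generated by $\bim{N}{\rL^2(P)}{N}$ — equivalently the even part of the planar algebra — is equivalent to $\cC$.

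Next I would pass from the subfactor $N \subset P$ to a \emph{depth 2} inclusion $Q \subset Q_1$ whose bimodule category is the same. The point is that the category $\bimod(N \subset P)$, by definition the tensor \Cstar-subcategory of $\bimod(N)$ generated by the finite-index $N$-$N$-bimodules occurring in $\rL^2(P)$, is a full finite fusion subcategory of $\bimod(N)$. Any finite fusion \Cstar-category arises with depth $\le 2$ from a suitable subfactor: concretely, inside the hyperfinite II$_1$ factor $R$ one can find a subfactor $Q \subset Q_1$ realizing $\cC$ as a depth 2 inclusion, because a depth 2 subfactor corresponds exactly (by Ocneanu/Szymański, and in the tracial II$_1$ setting by the Kac-algebra/quantum-groupoid picture of \cite{enockvallin00}) to the representation category of a finite-dimensional Kac algebra, and more generally the "group-like" reformulation lets one absorb the fusion data. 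The cleanest route: take $Q$ to be the hyperfinite II$_1$ factor carrying a minimal action of the finite-depth standard invariant as a module category, and let $Q_1$ be the first step of the Jones tower for the inclusion generating $\cC$; then $Q \subset Q_1$ has depth 2 precisely because in the basic construction the relative commutants $Q' \cap Q_1 \subset Q' \cap Q_2 \subset Q' \cap Q_3$ is itself a basic construction — this is the standard fact that the basic construction of any subfactor is depth 2 "one level up", i.e. if $N \subset P$ has finite depth then $P \subset P_k$ is depth 2 for $k$ large enough, and in fact one can always arrange $k=1$ after replacing $N \subset P$ by a finite-depth inclusion with the same even part. Using Corollary \ref{corol.depth2}, depth 2 of $Q \subset Q_1$ amounts to checking that $\bim{Q}{\rL^2((Q_1)_1)}{Q_1}$ embeds in $\bim{Q}{\rL^2(Q_1)^\infty}{Q_1}$, which holds because all the relevant $Q$-$Q_1$-bimodules already live in the category generated by $\rL^2(Q_1)$.

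Finally I would identify $\bimod(Q \subset Q_1)$ with $\cC$. By construction the finite-index $Q$-$Q$-bimodules appearing in $\rL^2(Q_1)$ tensor-generate the full finite fusion category modeling $\cC$, and conversely every simple object of $\cC$ occurs in some tensor power of $\rL^2(Q_1)$ (because $X$ tensor-generates $\cC$ and $\rL^2(Q_1)$ corresponds to $X \otimes \overline{X}$, or to $X$ itself depending on the normalization). Hence the fully faithful tensor \Cstar-functor from the abstract category $\cC$ into $\bimod(Q)$ has essential image exactly $\bimod(Q \subset Q_1)$, giving the desired equivalence $\bimod(Q \subset Q_1) \simeq \cC$, with $Q, Q_1$ hyperfinite.

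The main obstacle is the bookkeeping in the middle step: arranging that the inclusion realizing $\cC$ can be taken to have \emph{depth exactly 2} rather than merely finite depth, and that it is an inclusion of \emph{hyperfinite} II$_1$ factors of the specific basic-construction form $Q \subset Q_1$ required by Theorem \ref{thm:construct-bimodule-categories}. This is where one genuinely needs Popa's classification of amenable (hyperfinite) standard invariants \cite{popa90} — to know that the abstract finite $\lambda$-lattice is actually realized by a hyperfinite subfactor — together with the observation that passing to the basic construction collapses any finite-depth inclusion to a depth 2 one whose bimodule category is unchanged. Everything else (constructing the planar algebra from $\cC$, verifying the tensor functor is fully faithful, checking essential surjectivity onto $\bimod(Q \subset Q_1)$) is routine category theory plus the standard subfactor/planar-algebra dictionary.
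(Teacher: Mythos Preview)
Your overall architecture---build a planar algebra from a tensor-generating object of $\cC$, invoke Popa's reconstruction \cite{popa90} to get a hyperfinite subfactor, and identify its bimodule category with $\cC$---is exactly the paper's strategy. The difference is that the paper does this in one step, whereas you split it into two and the second step has a real gap.

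Your middle step, ``pass from a finite-depth $N\subset P$ to a depth~$2$ inclusion with the same even part'', does not work as written. The assertion that ``passing to the basic construction collapses any finite-depth inclusion to a depth~$2$ one'' is false: the depth of $M\subset M_1$ equals that of $N\subset M$ (principal and dual principal graphs simply swap), so going up the tower never reduces depth. Your fallback, ``$P\subset P_k$ is depth~$2$ for $k$ large enough'', is likewise incorrect in general, and the phrase ``after replacing $N\subset P$ by a finite-depth inclusion with the same even part'' is precisely the statement you are trying to prove.

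The paper avoids this entirely by a clever choice of generator that you actually mention but do not exploit: take $x$ to be the direct sum of \emph{one copy of every} simple object of $\cC$ (not merely some tensor-generating object). Then for each $k\geq 1$ the center of $P_k=\mathrm{End}(x\otimes\overline{x}\otimes\cdots)$ has dimension equal to the number of isomorphism classes of simples in $\cC$, because by Frobenius reciprocity every simple already occurs as a summand of $x\otimes\overline{x}$. In particular $\dim\cZ(P_1)=\dim\cZ(P_3)$, so the planar algebra has depth~$2$ \emph{from the outset}, and Popa's theorem hands you $Q\subset Q_1$ hyperfinite of depth~$2$ directly. The verification that $\bimod(Q\subset Q_1)\simeq\cC$ is then the routine dictionary argument you sketch in your last paragraph, with $\Ltwo(Q_1)$ corresponding to $x\otimes\overline{x}$.

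So: drop the two-step approach, specialize your $X$ to the sum of all simples, observe that this forces depth~$2$, and you are done.
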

\begin{proof}
  We define a depth $2$ subfactor planar algebra $P$, such that the inclusion of hyperfinite II$_1$ factors $Q \subset Q_1$ associated with it by \cite{popa90, jones99} satisfies $\Bimod(Q \subset Q_1) \simeq \cC$. Let $x \in \cC$ be the direct sum of representatives for every isomorphism class of irreducible objects in $\cC$. Denote by $\overline{x}$ the conjugate object of $x$. Let \[P_k := \mathrm{End}(\underbrace{x \otimes \overline{x} \otimes \dotsm \otimes x}_{k \text{ factors}}) \; \text{.}\] We prove that $P = \bigcup P_k$ is a subfactor planar algebra.
  Composition of endomorphisms and the $*$-functor of $\cC$ make $P$ a $*$-algebra. The categorical trace of $\cC$ defines a positive trace on $P$. Moreover, the graphical calculus for tensor \Cstar-categories induces an action of the planar operad on $P$. We have $\dim P_0 = 1$, since $1_\cC$ is irreducible. Moreover, for all $k$ we have $\dim P_k < \infty$, since $\cC$ is finite. Finally, the closed loops represent the number $\dim_\cC x \neq 0$. So $P$ is a subfactor planar algebra. It has depth $2$, since $\dim \cZ(P_k)$ is the number of isomorphism classes of irreducible objects of $\cC$ for every $k \geq 1$ and, in particular, $\dim \cZ(P_1) = \dim \cZ(P_3)$.

  Note that, in the language of \cite{popa90}, finite depth subfactor planar algebras correspond to canonical commuting squares \cite{bisch97, jones08}. So, by \cite{popa90}, there is an inclusion $Q \subset Q_1$ of hyperfinite II$_1$ factors with associated planar algebra $P^{Q \subset Q_1} \cong P$. Then $x$ corresponds to $\bim{Q}{\Ltwo(Q_1)}{Q_1}$. Let  $\cD = \bimod(Q \subset Q_1)$ and denote by $Q \subset Q_1 \subset Q_2$ the basic construction. If $p,q$ are minimal projections in $Q' \cap Q_2$, we canonically identify $\Hom_{Q\text{-}Q}(p\Ltwo(Q_1), q\Ltwo(Q_1))$ with $q (Q' \cap Q_2)p$. This defines a $\Cstar$-functor $F: \cD \rightarrow \cC$ sending $p \Ltwo(Q_1) \cong p(\bim{Q}{\Ltwo(Q_1) \otimes_{Q_1} \Ltwo(Q_1)}{Q})$ to $p(x \otimes \overline{x})$ and mapping morphisms as given by the identification $P^{Q \subset Q_1} \cong P$. Then $F$ is fully faithful and essentially surjective. We have to prove that $F$ preserves tensor products. Let $p$,$q$ be projections in $Q' \cap Q_2$. The shift-by-two operator $\mathrm{sh}_2: \: P_2 \rightarrow P_4$ is defined by adding two strings on the left. By \cite{bisch97}, we have $p\Ltwo(Q_1) \otimes_Q q\Ltwo(Q_1) \cong p \cdot \mathrm{sh}_2(q)\Ltwo(Q_2)$ as $Q$-$Q$-bimodules. On the other hand, we have $p(x \otimes \overline{x}) \otimes q(x \otimes  \overline{x}) \cong (p \otimes q)(x \otimes \overline{x} \otimes x \otimes \overline{x})$ in $\cC$. Since under the identification $P_k \cong Q' \cap Q_k$ the shift-by-two operator corresponds to $q \mapsto 1 \otimes q$, we have $\cC \simeq \cD$ as tensor $\Cstar$-categories. This completes the proof.
\end{proof}

\begin{proof}[Proof of Theorem \ref{thm:finite-tensor-categories}]
  Let $\cC$ be a finite tensor $\Cstar$-category. By Theorem \ref{thm:construct-bimodule-categories} it suffices to show that there is a finite index, depth 2 inclusion $N \subset Q$ of hyperfinite II$_1$ factors, such that for the basic construction $N \subset Q \subset Q_1$ we have $\bimod(Q \subset Q_1) \simeq \cC$. Indeed, if $N \subset Q$ is of finite index, then it is quasi-regular. By Theorem \ref{thm:reconstruct}, there is a finite index depth 2 inclusion $N_{-1} \subset N$ of hyperfinite II$_1$ factors such that $\bimod(N_{-1} \subset N) \simeq \cC$. Let $N_{-1} \subset N \subset Q \subset Q_1$ be the basic construction. Then $N \subset Q$ is a finite index, depth 2 inclusion and $\bimod(Q \subset Q_1) \simeq \bimod(N_{-1} \subset N) \simeq \cC$.
\end{proof}

\subsubsection{Representation categories}
\label{sec:representation-categories}

In \cite{falguieresvaes10} the categories of finite dimensional representations of compact second countable groups were realized as bimodule categories of a II$_1$ factor. As already mentioned, this forms a natural class of tensor $\Cstar$-categories, since they can be abstractly characterized as symmetric tensor $\Cstar$-categories with at most countably many isomorphism classes of irreducible objects. We realize categories of finite dimensional representations of discrete countable groups and of finite dimensional corepresentations of certain discrete Kac algebras as bimodule categories of a II$_1$ factor.  Neither does this class of categories have an abstract characterization, nor does the finite dimensional corepresentation theory of a discrete Kac algebra describe it completely.  However, Corollary \ref{cor:uncountable-bimodule-categories} shows that we have interesting applications coming from this class of tensor $\Cstar$-categories.

For notation concerning quantum groups, we refer the reader to the appendix in Section \ref{sec:appendix}. 

\begin{definition}[See Section 4.5 and Theorem 4.5 of \cite{soltan05}]
A discrete Kac algebra $A$ is called maximally almost periodic, if there is a family of finite dimensional corepresentations $U_n \in A \otimes \rB(H_{U_n})$ such that $A = \cspan\{ (\id \otimes \omega)(U_n) \amid n\in \NN, \omega \in \rB(H_{U_n})_*\}$  
\end{definition}

\begin{theorem}
\label{thm:reconstruct-categories-kac}
  Let $A$ be a discrete Kac algebra admitting a strictly outer action on the hyperfinite II$_1$ factor. Then there is a II$_1$ factor $M$ such that $\bimod(M) \simeq \CoRepfin(A^{\mathrm{coop}})$.
\end{theorem}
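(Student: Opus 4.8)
The plan is to reduce the statement to Theorem~\ref{thm:construct-bimodule-categories} by producing, out of the given strictly outer action, a suitable depth~$2$ inclusion. By Theorem~\ref{thm:construct-bimodule-categories} it suffices to exhibit a quasi-regular, depth~$2$ inclusion $N\subset Q$ of II$_1$ factors with $N$ and $N'\cap Q$ hyperfinite and with $\bimod(Q\subset Q_1)\simeq\CoRepfin(A^{\mathrm{coop}})$, where $N\subset Q\subset Q_1$ denotes the basic construction.

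I would take $N:=R$ and $Q:=R\rtimes_\alpha A$, the crossed product of the hyperfinite II$_1$ factor $R$ by a strictly outer action $\alpha$ of $A$ (see Section~\ref{sec:appendix} for the terminology), equipped with its canonical trace, which is indeed tracial because $A$ is of Kac type. The hypotheses of Theorem~\ref{thm:construct-bimodule-categories} are then verified as follows. First, strict outerness of $\alpha$ is by definition the statement $N'\cap Q=\C 1$; hence $\cZ(Q)\subset N'\cap Q=\C 1$ and, being finite and containing the diffuse algebra $R$, $Q$ is a II$_1$ factor, while $N=R$ and $N'\cap Q=\C 1$ are hyperfinite. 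Secondly, since $A$ is discrete its dual $\hat A$ is a compact Kac algebra, and the matrix coefficients $u^{\pi}_{ij}$ of its finite-dimensional corepresentations lie in $Q$, generate $Q$ together with $R$, and satisfy $R\,u^{\pi}_{ij}\subset\sum_k u^{\pi}_{ik}\,R$ by the crossed-product commutation relation; thus $u^{\pi}_{ij}\in\QN_Q(R)$ and $\QN_Q(R)''=Q$, so $N\subset Q$ is quasi-regular. (No amenability or almost periodicity of $A$ intervenes here: compactness of $\hat A$ already supplies enough finite-dimensional corepresentations.) Thirdly, $N\subset Q$ has depth~$2$: by Corollary~\ref{corol.depth2} one must check that $\bim{N}{\Ltwo(Q_1)}{Q}$ embeds into $\bim{N}{\Ltwo(Q)^{\infty}}{Q}$, which follows from the description of $Q_1=\langle Q,e_N\rangle$ as the crossed product of $Q$ by the dual coaction of $\hat A$ (the duality theorem for crossed products), making $\Ltwo(Q_1)$ a countable direct sum of sub-$N$-$Q$-bimodules of $\bim{N}{\Ltwo(Q)}{Q}$, countability coming from second countability of $\hat A$; equivalently one may invoke the fact that a crossed product of a II$_1$ factor by a discrete Kac algebra is always a depth~$2$ inclusion.

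The core of the argument is the identification $\bimod(Q\subset Q_1)\simeq\CoRepfin(A^{\mathrm{coop}})$. Using the canonical isomorphism $\Ltwo(Q_1)\cong\Ltwo(Q)\otimes_N\Ltwo(Q)$ of $Q$-$Q$-bimodules together with the spectral decomposition of $\Ltwo(Q)=\Ltwo(R\rtimes_\alpha A)$ over the irreducible corepresentations of $\hat A$, one gets a complete set of representatives of the simple $Q$-$Q$-subbimodules of $\Ltwo(Q_1)$ indexed by the irreducible finite-dimensional corepresentations of $A$; and the relative commutant $N'\cap Q_1$, carrying the Kac algebra structure attached to a depth~$2$ inclusion by the Enock--Vallin machinery (cf.\ Theorem~\ref{sec:depth-2:thm:characterization}), is canonically isomorphic to $A$. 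Passing through this isomorphism identifies the tensor $\Cstar$-category $\bimod(Q\subset Q_1)$ with a category of finite-dimensional corepresentations; the one delicate point is that under this identification the Connes tensor product of bimodules corresponds to the tensor product of corepresentations taken in the reversed order, which is exactly why the relevant Kac algebra must be replaced by its coopposite, and --- with the conventions fixed in Section~\ref{sec:appendix} --- this yields $\bimod(Q\subset Q_1)\simeq\CoRepfin(A^{\mathrm{coop}})$. Applying Theorem~\ref{thm:construct-bimodule-categories} to $N\subset Q$ finally produces a II$_1$ factor $M$ with $\bimod(M)\simeq\bimod(Q\subset Q_1)\simeq\CoRepfin(A^{\mathrm{coop}})$.

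The main obstacle is the last paragraph: the construction itself is folklore about crossed products and basic constructions, but making the equivalence $\bimod(Q\subset Q_1)\simeq\CoRepfin(A^{\mathrm{coop}})$ precise --- in particular confirming that the Kac algebra attached by the depth~$2$ machinery to $R\subset R\rtimes_\alpha A$ is genuinely $A$ rather than merely a Kac algebra with the right fusion rules, and tracking the op/coop conventions through the tensor functor --- requires care.
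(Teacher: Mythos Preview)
Your overall strategy is exactly the paper's: set $N=R$, $Q=A\ltimes R$, check the hypotheses of Theorem~\ref{thm:construct-bimodule-categories}, and identify $\bimod(Q\subset Q_1)$ with $\CoRepfin(A^{\mathrm{coop}})$. The paper dispatches the basic-construction and depth~$2$ facts by direct citation of \cite{vaes01} rather than via Corollary~\ref{corol.depth2} as you do, but this is cosmetic.

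The one substantive divergence is the last step, precisely the obstacle you flag. You propose to obtain $\bimod(Q\subset Q_1)\simeq\CoRepfin(A^{\mathrm{coop}})$ by invoking the Enock--Vallin Galois reconstruction to identify $N'\cap Q_1$ with $A$ as a Kac algebra, and then to track the op/coop conventions through the tensor structure. The paper avoids this route entirely: in the appendix (Theorem~\ref{thm:representation-categories-for-subfactors}) it constructs by hand a fully faithful tensor $\Cstar$-functor $F:\CoRepfin(A^{\mathrm{coop}})\to\bimod(Q\subset Q_1)$, associating to each finite-dimensional corepresentation $V\in A\otimes\Cmat{n}$ of $A^{\mathrm{coop}}$ an explicit $*$-homomorphism $\psi:Q\to\Cmat{n}\otimes Q$ (trivial on $N$, and determined on $\wh A$ by $(\id\otimes\psi)(U)=U_{13}V_{12}$ for irreducible corepresentations $U$ of $\wh A$), and then verifies fullness and essential surjectivity directly. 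In this approach the coop appears transparently from the relation $(\Delta\otimes\id)(V)=V_{13}V_{23}$, and one never needs to ask whether the Kac algebra reconstructed from the depth~$2$ inclusion is literally $A$ rather than something Morita-related. Your route is conceptually reasonable and would likely go through, but the paper's explicit functor is a cleaner resolution of exactly the difficulty you identified.
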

\begin{proof}
  Since $A$ acts strictly outerly on the hyperfinite II$_1$ factor $R$, the inclusion $R \subset A \ltimes R \subset \wh{A}^{\mathrm{coop}} \ltimes A \ltimes R$ is a basic construction by \cite[Proposition 2.5 and Corollary 5.6]{vaes01}. The inclusion $R \subset A \ltimes R$ has depth $2$ by \cite[Corollary 5.10]{vaes01} and since $A$ is discrete, it is quasi-regular. Moreover, we have $\bimod(A \ltimes R \subset \wh{A} \ltimes A \ltimes R) \simeq \CoRepfin(A^{\mathrm{coop}})$ by Theorem \ref{thm:representation-categories-for-subfactors}. So Theorem \ref{thm:construct-bimodule-categories} yields a II$_1$ factor $M$ such that $\bimod(M) \simeq \CoRepfin(A^{\mathrm{coop}})$.
\end{proof}

\begin{proof}[Proof of Theorems \ref{thm:representation-categories}]
  By Theorem \ref{thm:reconstruct-categories-kac} it suffices to show that every discrete group $G$ and every amenable and every maximally almost periodic Kac algebra $A$ has a strictly outer action on the hyperfinite II$_1$ factor $R$. 

Let us first consider the case of a discrete group. The non-commutative Bernoulli shift $G \grpaction{} (\Cmat{2}, \tr)^{\otimes G}$ is well known to be outer. It is clear that $\otimes_{n=1}^\infty (\Cmat{2}, \tr)$ is isomorphic to $R$.

First note that $(A^{\mathrm{coop}})^{\mathrm{coop}} = A$ for all quantum groups $A$. By Vaes \cite[Theorem 8.2]{vaes05-strictly-outer-actions}, it suffices to show that every amenable and every maximally almost periodic Kac algebra $A$ has a faithful corepresentation of $A^{\mathrm{coop}}$ in the hyperfinite II$_1$ factor.

If $A$ is a discrete amenable Kac algebra, then so is $A^{\mathrm{coop}}$. By \cite[Proposition 8.1]{vaes05-strictly-outer-actions}, $A^{\mathrm{coop}}$ has a faithful corepresentation into $R$.  If $A$ is a discrete maximally almost periodic Kac algebra, then $A^{\mathrm{coop}}$ is also maximally almost periodic, since $A$ has a bounded antipode. There is a countable family of corepresentations $U_n$ of $A^{\mathrm{coop}}$ whose coefficients span $A$ densely.  Considering $\oplus_n \rB(H_{U_n})$ as a von Neumann subalgebra of $R$, the corepresentation $\boxplus_n U_n$ of $A^{\mathrm{coop}}$ is faithful.
\end{proof}

As a corollary of Theorem \ref{thm:representation-categories}, we get the following improvement of \cite[Corollary 8.8]{ioanapetersonpopa08} and \cite{falguieresvaes08}. This is the first example of an explicitly known bimodule category with uncountably many isomorphism classes of irreducible objects.

\begin{corollary}
\label{cor:uncountable-bimodule-categories}
  Let $G$ be a second countable, compact group. Then there is a II$_1$ factor $M$ such that $\Out(M) \cong G$ and every finite index bimodule of $M$ is of the form $\cH(\alpha)$ for some $\alpha \in \Aut(M)$. In particular, the bimodule category of $M$ can be explicitly calculated and has an uncountable number of isomorphism classes of irreducible objects.
\end{corollary}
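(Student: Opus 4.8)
The key observation is that $G$, being a second countable compact group, has a representation category $\Rep(G)$ which is a symmetric, hence certainly a compact, tensor \Cstar-category with only countably many isomorphism classes of irreducible objects, and — crucially — its group-like elements (objects of dimension $1$) form exactly the Pontryagin-type dual, namely the character group $\Char(G)$. I would not apply Theorem \ref{thm:finite-tensor-categories} directly (since $\Rep(G)$ is typically not finite), but rather invoke the already-established result of Falguières--Vaes \cite{falguieresvaes10}: there is a II$_1$ factor $M$ together with a minimal action $G \actson \wt M$ such that, writing $M = \wt M^G$, the natural tensor functor $\Rep(G) \to \bimod(M)$ is an equivalence of tensor \Cstar-categories. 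This is the input; what remains is to read off the two asserted consequences.

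\emph{Identifying $\Out(M)$.} Recall the short exact sequence from the introduction, $1 \to \Out(M) \to \grp(M) \to \cF(M) \to 1$, where $\grp(M)$ is the group of isomorphism classes of index-$1$ bimodules of $M$. Under the equivalence $\bimod(M) \simeq \Rep(G)$, the index-$1$ bimodules correspond precisely to the $1$-dimensional representations of $G$, so $\grp(M) \cong \widehat{G_{\mathrm{ab}}}$, the (discrete) character group of $G$. For the construction of \cite{falguieresvaes10}, one moreover knows $\cF(M) = \{1\}$ (the fundamental group is trivial; this is part of what that paper proves, using Popa's rigidity), so the exact sequence collapses to $\Out(M) \cong \grp(M)$. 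But one needs $\Out(M) \cong G$, not merely $\cong \widehat{G_{\mathrm{ab}}}$ — so the statement as phrased should really be read via the specific factor produced in \cite{falguieresvaes10}, where a concrete isomorphism $\Out(M) \cong G$ is exhibited directly from the minimal action (an outer automorphism of $\wt M^G$ lifts, by minimality, to an automorphism of $\wt M$ commuting with $G$, and such automorphisms are implemented by elements of $G$ itself); this is the place where I would simply cite \cite{falguieresvaes10, falguieresvaes08} rather than reprove.

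\emph{Every finite index bimodule is of the form $\cH(\alpha)$.} A bimodule $\cH(\alpha)$ with $\alpha \in \Aut(M)$ has index $1$, and conversely every index-$1$ bimodule is of this form; so this assertion is equivalent to saying that $\bimod(M)$ has \emph{no} irreducible objects of index $>1$, i.e. every irreducible finite index bimodule is group-like. Translated through the equivalence $\bimod(M) \simeq \Rep(G)$, this says every irreducible unitary representation of $G$ is $1$-dimensional — which holds precisely when $G$ is abelian. Hence the corollary as literally stated is correct only for $G$ abelian (or one must weaken ``every finite index bimodule is $\cH(\alpha)$'' — and in the non-abelian case the honest statement is that every finite index bimodule is a direct sum of pieces $\bigoplus_i p_i \cH(u_i)\,$-type summands coming from matrix coefficients of $\Rep(G)$, equivalently $\bimod(M)\simeq\Rep(G)$ with all irreducibles explicitly described). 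In either reading, once we have $\bimod(M) \simeq \Rep(G)$ the conclusion is immediate: the isomorphism classes of irreducible objects of $\bimod(M)$ are in bijection with $\widehat{G}$, which is uncountable as soon as $G$ is infinite (e.g. $G$ an infinite second countable compact group has uncountably many distinct irreducibles? — no: $\widehat G$ is still countable for compact $G$; the uncountability here must come from the \emph{fusion algebra} having uncountably many irreducibles only in a different construction).

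\emph{Where the real content sits.} So the genuine point — and the main obstacle to writing this cleanly — is reconciling ``uncountably many isomorphism classes of irreducible objects'' with $\Rep(G)$ for $G$ compact, whose irreducibles are always countable in number. I would therefore not prove the corollary from $\Rep(G)$ at all, but from Theorem \ref{thm:representation-categories}: take $\cC = \Repfin(\Gamma)$ for a suitable \emph{countable discrete} group $\Gamma$ — say $\Gamma$ an infinite group with uncountably many finite-dimensional irreducible unitary representations up to isomorphism (e.g. a group with $\widehat{\Gamma}$ uncountable, such as $\Gamma$ containing a copy of $\Z$ whose one-dimensional characters already form the uncountable circle $\mathbb T$). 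Then Theorem \ref{thm:representation-categories} gives a II$_1$ factor $M$ with $\bimod(M) \simeq \Repfin(\Gamma)$; the index-$1$ bimodules correspond to the one-dimensional characters $\widehat{\Gamma_{\mathrm{ab}}}$, and choosing $\Gamma$ abelian (so $\Gamma_{\mathrm{ab}} = \Gamma$ and every irreducible is one-dimensional) makes $G := \widehat\Gamma$ a second countable compact group, every finite index bimodule group-like hence of the form $\cH(\alpha)$, and $\Out(M) \cong \grp(M)/\cF(M)$; combined with the triviality of $\cF(M)$ coming from the Baire-category construction of Theorem \ref{thm:construct-bimodule-categories} one gets $\Out(M) \cong \widehat\Gamma = G$ and $\bimod(M)$ with uncountably many (namely $|G|$-many, i.e. continuum-many) irreducible classes. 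The hard part of writing the proof is simply assembling these already-proved ingredients with the correct choice of $\Gamma$ and verifying the triviality of $\cF(M)$ for that choice — which I would extract from the fact that in Theorem \ref{thm.main} the factor $M$ is an amalgamated free product with a property (T) piece, forcing $\cF(M)$ into the automorphisms respecting the category, which are exhausted by $\grp(M)$.
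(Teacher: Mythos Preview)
Your second approach --- taking a countable discrete abelian group $\Gamma$, applying Theorem~\ref{thm:representation-categories} to obtain $M$ with $\bimod(M)\simeq\Repfin(\Gamma)$, and setting $G=\widehat{\Gamma}$ --- is correct and is precisely the abelian special case of what the paper does. Your worry about $\cF(M)$ is unnecessary: once every irreducible in $\bimod(M)$ has categorical dimension $1$, each is isomorphic to some $\cH(\alpha)$ with $\alpha\in\Aut(M)$, so the map $\Out(M)\to\grp(M)$ is surjective and the exact sequence forces $\cF(M)=\{1\}$ automatically. No separate appeal to property~(T) is needed.

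The genuine gap is for non-abelian $G$. You conclude that ``the corollary as literally stated is correct only for $G$ abelian'', but the paper proves it for \emph{every} second countable compact $G$. The missing idea is that one should not look at $\Rep(G)$ at all, nor at $\Repfin(\Gamma)$ for a discrete group, but at the category $\CoRepfin(\rL(G))$ of finite-dimensional \emph{corepresentations} of the discrete Kac algebra $\rL(G)$. The key external input is a theorem of So{\l}tan: $\rL(G)$ is maximally almost periodic, and its irreducible finite-dimensional corepresentations are all one-dimensional, parametrized by the elements of $G$, with tensor product given by multiplication in $G$. Thus $\CoRepfin(\rL(G))$ is pointed with group of invertibles equal to $G$ itself, even when $G$ is non-abelian. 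Feeding $A=\rL(G)$ into the Kac-algebra clause of Theorem~\ref{thm:representation-categories} then gives the desired $M$. When $G$ happens to be abelian, $\rL(G)\cong\ell^\infty(\widehat{G})$ and this collapses to your construction; for non-abelian $G$ there is no discrete group $\Gamma$ doing the job, and the quantum-group framework is essential.
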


The exact sequence $1 \ra \Out(M) \ra \mathrm{grp}(M) \ra \cF(M) \ra 1$ shows that the fundamental group of $M$ obtained in Corollary \ref{cor:uncountable-bimodule-categories} is trivial.  Note, that the factors constructed in \cite{falguieresvaes08, ioanapetersonpopa08} also have trivial fundamental group.

\begin{proof}
 Let $G$ be a second countable, compact group.  By \cite[Theorem 4.2]{soltan06}, $\rL(G)$ is maximally almost periodic and its irreducible, finite dimensional corepresentations are one dimensional and indexed by elements of $G$. Their tensor product is given by multiplication in $G$. So we can apply Theorem \ref{thm:representation-categories} to the discrete Kac algebra $\rL(G)$ in order to obtain $M$.
\end{proof}

\subsection{Possible indices of irreducible subfactors}
\label{sec:indices}

In this section, we investigate the structure of subfactors of the II$_1$ factor $M$ that we obtained in Theorem \ref{thm:finite-tensor-categories}.  We write
\[\cC(M) = \left \{ \lambda \mid \text{there is an irreducible finite index subfactor of } M \text{ with index } \lambda \right \} \, \text{.}\]
We use the fact that the lattice of irreducible subfactors of a II$_1$ factor is actually encoded in its bimodule category. In special cases, indices of irreducible subfactors correspond to Frobenius-Perron dimensions (see \cite[Section 8]{etingofnikshychostrik05}) of objects in the bimodule category. Using recent work on tensor categories \cite{grossmansnyder11} and Theorem \ref{thm:finite-tensor-categories}, we give examples of of II$_1$ factors  $M$ such that $\cC(M)$ can be computed explicitly and contains irrationals.

\begin{definition}[See \cite{fuchsschweigert03, yamagami04}]
Let $\cC$ be a compact tensor $\Cstar$-category with tensor unit $1_\cC$.
\begin{enumerate}
  \item An algebra $(\cA,m,\eta)$ in $\cC$ is an object $\cA$ in $\cC$ with multiplication and unit maps $m: \cA \otimes \cA \rightarrow \cA$ and $\eta: 1_\cC \rightarrow \cA$ such that the following diagrams commute

  \centerline{
  \begin{xy}
    \xymatrix{\cA \otimes \cA \otimes \cA \ar[r]^-{m \otimes \id} \ar[d]^{\id \otimes m}	& \cA \otimes \cA \ar[d]^m	\\
	      \cA \otimes \cA \ar[r]^-{m}							& \cA}
  \end{xy}
  \phantom{xxxx}
  \begin{xy}
    \xymatrix{\cA \otimes 1_\cC \ar[d]^{\id \otimes \eta}	& \cA \ar[r]^-{\cong} \ar[l]_-{\cong} \ar@{=}[d]	& 1_\cC \otimes \cA \ar[d]^{\eta \otimes \id} \\
	      \cA \otimes \cA \ar[r]^-{m}				& \cA						& \cA \otimes \cA \, \text{.} \ar[l]_-{m}} 
  \end{xy}}

  \item A coalgebra $(\cA, \Delta, \epsilon)$ in $\cC$ is an object $\cA$ in $\cC$ with comultiplication and counit map $\Delta: \cA \rightarrow \cA \otimes \cA$ and $\epsilon: \cA \rightarrow 1_\cC$ such that $(\cA, \Delta^*, \epsilon^*)$ is an algebra.
  \item A Frobenius algebra $(\cA, m, \eta, \Delta, \epsilon)$ in $\cC$ is an object $\cA$ in $\cC$ with maps $m$, $\eta$, $\Delta$, $\epsilon$ such that $(\cA, m, \eta)$ is an algebra, $\Delta = m^*$, $\epsilon = \eta^*$ and
  \[(\id \otimes m) \circ (\Delta \otimes \id) = \Delta \circ m = (m \otimes \id) \circ (\id \otimes \Delta) \, \text{.}\]
  \item A Frobenius algebra $(\cA, m, \eta, \Delta, \epsilon)$ is special if $\Delta$ and $\eta$ are isometric.
  \item A Frobenius algebra $\cA$ is irreducible if $\dim (\Hom(1_\cC, \cA)) = 1$.
\end{enumerate}
\end{definition}

\begin{remark}
Note that the notion of a special Frobenius algebra is equivalent to the notion of a Q-system \cite{longoroberts97}.
\end{remark}

The following lemma and proposition are probably well known, but since we could not find a reference, we give a short proof for convenience of the reader.

\begin{lemma}\label{la:lem:inclusionsgivefrobenius}
  Let $M \subset M_1$ be a finite index inclusion of tracial von Neumann algebras. Then $\Ltwo(M_1)$ is a special Frobenius algebra in $\bimod(M)$. The Frobenius algebra $\Ltwo(M_1)$ is irreducible if and only if $M \subset M_1$ is irreducible.
\end{lemma}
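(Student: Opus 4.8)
The plan is to construct the Frobenius algebra structure on $\Ltwo(M_1)$ directly from the inclusion data, mimicking the classical Q-system picture (see \cite{longoroberts97}). First I would recall that $M \subset M_1$ being of finite index means $\bim{M}{\Ltwo(M_1)}{M}$ is a finite index $M$-$M$-bimodule, so it is an object of $\bimod(M)$, and that the Jones projection $e_M \in M_1$ together with the trace-preserving conditional expectation $\rE_M : M_1 \to M$ and its dual $\rE_{M_1}: M_2 \to M_1$ on the basic construction $M \subset M_1 \subset M_2$ provide the relevant maps. Concretely, the multiplication $m: \Ltwo(M_1) \otimes_M \Ltwo(M_1) \to \Ltwo(M_1)$ is the $M$-$M$-bimodular map induced by the multiplication of $M_1$ (which is well defined on the Connes tensor product precisely because both factors are the ``same'' $M$-$M$-bimodule $\Ltwo(M_1)$), and the unit $\eta: \Ltwo(M) \to \Ltwo(M_1)$ is the inclusion $\widehat{x} \mapsto \widehat{x}$. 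The comultiplication is taken to be $\Delta = m^*$ and the counit $\epsilon = \eta^*$; one computes that $\epsilon$ is (a scalar multiple of) the map $\widehat{x} \mapsto \widehat{\rE_M(x)}$ and $\Delta$ is expressed via the Jones projection.

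Next I would verify the axioms. Associativity of $m$ is immediate from associativity of multiplication in $M_1$. The unit axioms follow because $\Ltwo(M) \otimes_M \Ltwo(M_1) \cong \Ltwo(M_1) \cong \Ltwo(M_1) \otimes_M \Ltwo(M)$ canonically and the induced maps are the obvious identifications. The Frobenius relation $(\id \otimes m)\circ(\Delta \otimes \id) = \Delta \circ m = (m \otimes \id)\circ(\id \otimes \Delta)$ is the standard Q-system computation: it amounts to the fact that in $M_2$ one has the relation $e_{M_1}$-based rewriting $x e_M y = $ (sum expansions) that expresses the pull-push identity, equivalently, it follows from naturality of the conditional expectations and the ``pull-through'' property of Jones projections. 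I would present this as the identity on $\Ltwo(M_2) \cong \Ltwo(M_1)\otimes_M \Ltwo(M_1)$ and reduce it to $e_M$-bimodularity. For \emph{speciality}, I must check that $\Delta$ and $\eta$ are isometric up to rescaling, or rather — following the precise definition given — isometric; this is arranged by normalizing the trace/inclusion so that $\eta^*\eta = \id$ (equivalently scaling $\Ltwo(M_1)$), and then $m m^* = [M_1:M]\cdot \id$ forces $\Delta$ isometric after the same normalization, which is exactly the Q-system condition $m \circ m^* = $ scalar $\cdot \id$ together with $\eta^* \eta = \id$.

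Finally, for the irreducibility statement: $\Hom_{M\text{-}M}(\Ltwo(M), \Ltwo(M_1))$ is canonically identified with the space of $M$-central vectors in $\Ltwo(M_1)$, which is $\Ltwo(M' \cap M_1)$; since $M \subset M_1$ is finite index, $M' \cap M_1$ is finite dimensional, and $\dim \Hom_{M\text{-}M}(\Ltwo(M), \Ltwo(M_1)) = \dim(M' \cap M_1)$. Hence this dimension equals $1$ if and only if $M' \cap M_1 = \C 1$, i.e. if and only if $M \subset M_1$ is irreducible. The main obstacle I anticipate is not any single step but the bookkeeping of normalization constants: the various natural maps ($m$, $\eta$, their adjoints, the conditional expectations) carry factors of $[M_1:M]$ or $[M_1:M]^{1/2}$ depending on how $\Ltwo(M_1)$ is normalized as a bimodule, and one has to choose the normalization consistently so that ``special'' holds on the nose rather than just up to scalar. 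I would handle this by fixing at the outset the identification $\bim{M}{\Ltwo(M_1)}{M} \cong \bim{M}{\cH(\psi)}{M}$ for the standard finite-index inclusion $\psi$, and reading off all maps in that model, where the Q-system normalization is classical.
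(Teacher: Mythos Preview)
Your approach is correct and in spirit the same as the paper's: define $m$ via multiplication in $M_1$, $\eta$ as the inclusion $\Ltwo(M)\hookrightarrow\Ltwo(M_1)$, and take $\Delta=m^*$, $\epsilon=\eta^*$. The paper, however, is considerably more economical in two places. First, rather than verifying the Frobenius relation and speciality by hand (and wrestling with the normalization constants you worry about), it invokes \cite{longoroberts97} to reduce everything to showing that $m$ is coisometric and $\eta$ is isometric. Second, to see that $m$ is coisometric, the paper uses the identification $\Ltwo(M_1)\otimes_M\Ltwo(M_1)\cong\Ltwo(M_2)$ coming from the basic construction, under which $m$ becomes the Jones projection $e:\Ltwo(M_2)\to\Ltwo(M_1)$; coisometry is then immediate. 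This completely bypasses the index-factor bookkeeping you anticipated as the main obstacle. Your irreducibility argument via $\Hom_{M\text{-}M}(\Ltwo(M),\Ltwo(M_1))\cong M'\cap M_1$ is fine; the paper instead argues via irreducibility of $\bim{M}{\Ltwo(M_1)}{M_1}$ and the observation that $\bim{M}{\Ltwo(M_1)}{M}\cong\bim{M}{\Ltwo(M_1)\otimes_{M_1}\Ltwo(M_1)}{M}$, which amounts to the same thing by Frobenius reciprocity.
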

\begin{proof}
  We prove that $\Ltwo(M_1)$ is an algebra in $\bimod(M)$ with coisometric multiplication and isometric unit. By \cite{longoroberts97}, this shows that $\Ltwo(M_1)$ is a special Frobenius algebra. The multiplication on $\Ltwo(M_1)$ is given by $m(x \otimes_M y) = xy$, for $x, y \in M$. The commutative diagram
  
  \centerline{
  \begin{xy}
  \xymatrix{\Ltwo(M_1) \otimes_M \Ltwo(M_1) \ar[r]^-{m} \ar[d]^{\cong} &
	      \Ltwo(M_1) \\
	    \Ltwo(M_2) \ar[ur]^{e}
  }
  \end{xy}
  }
  
  proves that $m$ is well defined and coisometric. Here we denote by $M \subset M_1 \subset M_2$ the basic construction and we denote by $e$ the Jones projection. The unit map of $\Ltwo(M_1)$ is given by the canonical embedding $\Ltwo(M) \rightarrow \Ltwo(M_1)$.
  
  The inclusion $M \subset M_1$ is irreducible if and only if $\bim{M}{\Ltwo(M_1)}{M_1}$ is irreducible if and only if $\bim{M}{\Ltwo(M_1)}{M} \cong \bim{M}{\Ltwo(M_1) \otimes_{M_1} \Ltwo(M_1)}{M}$ contains a unique copy of $\bim{M}{\Ltwo(M)}{M}$.
\end{proof}

Whenever $\cH$ is a finite index $M$-$M$-bimodule over a II$_1$ factor $M$, we denote by $\cH^0$ the set of bounded vectors in $\cH$. Recall that $\cH^0$ is dense in $\cH$.

\begin{proposition}\label{la:prop:characterisationinclusions}
Let $M$ be a II$_1$ factor. Then there is a bijection between irreducible special Frobenius algebras in $\bimod(M)$ and irreducible finite index inclusions $M \subset M_1$ of II$_1$ factors. The bijection is given by
\[ \cH \mapsto (M \subset \cH^0) \text{ and } (M \subset M_1) \mapsto \bim{M}{\Ltwo(M_1)}{M} \, \text{.}\]
\end{proposition}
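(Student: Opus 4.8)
The plan is to show that the two assignments in the statement are mutually inverse, using Lemma \ref{la:lem:inclusionsgivefrobenius} for one direction. First I would establish that, given an irreducible finite index inclusion $M \subset M_1$, Lemma \ref{la:lem:inclusionsgivefrobenius} already tells us that $\bim{M}{\Ltwo(M_1)}{M}$ is an irreducible special Frobenius algebra, so the map $(M \subset M_1) \mapsto \bim{M}{\Ltwo(M_1)}{M}$ is well defined. To see that composing it with $\cH \mapsto (M \subset \cH^0)$ gives back $M \subset M_1$, I would identify $\cH^0$, the bounded vectors of $\Ltwo(M_1)$, with $M_1$ itself (every element of $M_1$ is a left- and right-bounded vector in $\Ltwo(M_1)$, and conversely a bounded vector lies in $M_1$ since $M \subset M_1$ has finite index, e.g. using the conditional expectation and a Pimsner–Popa basis); then the multiplication $m$ of the Frobenius algebra is exactly the product of $M_1$, so the pair $(M \subset \cH^0)$ is literally $(M \subset M_1)$.

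The more substantial direction is to start from an abstract irreducible special Frobenius algebra $\cH = \bim{M}{\cH}{M}$ and build a II$_1$ factor $M_1$ with $M \subset M_1$ irreducible and $\bim{M}{\Ltwo(M_1)}{M} \cong \cH$. Here I would equip $\cH^0$, the dense subspace of bounded vectors, with the algebra structure coming from $m$: for $\xi, \eta \in \cH^0$ set $\xi \cdot \eta = m(\xi \otimes_M \eta)$, which is again a bounded vector because $m$ is bounded and $M$-bimodular. The unit $\eta: \Ltwo(M) \to \cH$ embeds $M$ as a unital subalgebra $M \cdot 1 \subset \cH^0$ (irreducibility of the Frobenius algebra, $\dim\Hom(1_\cC,\cH)=1$, is what makes this copy of $M$ the right one). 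Using the Frobenius relation $(\id \otimes m)\circ(\Delta \otimes \id) = \Delta \circ m = (m \otimes \id)\circ(\id \otimes \Delta)$ together with $\Delta = m^*$ I would define an involution on $\cH^0$ (the standard Q-system involution, obtained from $\Delta$, the conjugate $\ol{R},R$ of $\cH$, and the fact that $\cH$ is self-dual as a Frobenius algebra) and check that $\cH^0$ becomes a $*$-algebra containing $M$ in a trace-compatible way, with the trace on $\cH^0$ given by $\xi \mapsto \langle \xi, \eta(1)\rangle$ composed with $\epsilon$. One then takes $M_1 := (\cH^0)''$ acting on the GNS space of this trace; finiteness of the index of $\cH$ as a bimodule gives that $M_1$ is a finite von Neumann algebra with $\Ltwo(M_1) \cong \cH$ as $M$-$M$-bimodules, and irreducibility of the Frobenius algebra forces $M' \cap M_1 = \CC 1$, so $M_1$ is a factor and $M \subset M_1$ is irreducible of finite index. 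This is exactly the Q-system reconstruction of Longo, and I would cite \cite{longoroberts97} for the fact that a (special) Frobenius algebra / Q-system in $\bimod(M)$ yields such an inclusion, only spelling out the identifications needed here.

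Finally I would close the loop: starting from $\cH$, passing to $(M \subset M_1)$, and then back to $\bim{M}{\Ltwo(M_1)}{M}$ returns $\cH$ by construction (the bimodule underlying the reconstructed $M_1$ is $\cH$, and the Frobenius structure on $\Ltwo(M_1)$ produced by Lemma \ref{la:lem:inclusionsgivefrobenius} coincides with the original $(m,\eta,\Delta,\epsilon)$ because the multiplication on $M_1 = \cH^0$ was defined to be $m$ and special Frobenius structures compatible with a given multiplication are unique). I expect the main obstacle to be the careful verification that $\cH^0$ with the product $m$ and the Q-system involution is a genuine $*$-algebra whose weak closure is a II$_1$ factor — i.e. checking associativity of $\cdot$ (this is precisely the algebra axiom, a commuting pentagon-type diagram for $m$), that the involution is anti-multiplicative and isometric for the trace, and that no bounded vectors are lost or gained in passing between $\cH^0$ and $M_1$. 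All of this is standard Q-system theory, so in the write-up I would state the identifications precisely and defer the diagram chases to \cite{longoroberts97}.
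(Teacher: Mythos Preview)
Your plan is correct and follows the standard Q-system reconstruction, but the paper takes a somewhat different and more concrete route in the key direction (from a Frobenius algebra $\cH$ to an inclusion). Rather than building the $*$-structure on $\cH^0$ abstractly and then passing to a GNS bicommutant, the paper immediately embeds $\cH^0$ into the von Neumann algebra $M_2 := \Hom_{\text{-}M}(\cH)$ (the commutant of the right $M$-action) via the map $\phi:\cH^0 \to M_2$ induced by left multiplication $\xi \mapsto m(\xi \otimes_M \,\cdot\,)$. This has two advantages: the $*$-structure is inherited for free from the ambient operator algebra $M_2$, and weak closure of $\phi(\cH^0)$ follows directly from the fact that $\lmo{M}{\cH}$ has finite dimension, so $\phi(\cH^0)$ is finitely generated as an $M$-module. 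The nontrivial point the paper then checks is that $\phi(\cH^0)$ is closed under adjoints; this is done by comparing the self-duality $\Delta\circ\eta$ of the Frobenius algebra with the standard conjugate $R$ of $\cH$, and identifying the resulting isomorphism $\psi:\cH \to \ol{\cH}$ with the restriction of $\ol{\phantom{x}}\circ *$ on $M_2$.

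Your approach buys a cleaner narrative tying directly into \cite{longoroberts97}, and you address both compositions of the bijection more explicitly than the paper does. The paper's approach buys concreteness: it never has to verify independently that the abstract Q-system involution is isometric or that the GNS construction terminates in a II$_1$ factor, because everything lives inside the already-existing $M_2$. Your main obstacle, as you anticipated, is exactly where the paper's embedding trick pays off; if you follow your plan, be sure the diagram chases defining the involution on $\cH^0$ are genuinely available in \cite{longoroberts97} in the bimodule (rather than endomorphism) setting, or else spell them out.
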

\begin{proof}
Lemma \ref{la:lem:inclusionsgivefrobenius} shows that $\Ltwo(M_1)$ is an irreducible special Frobenius algebra for all irreducible finite index inclusions $M \subset M_1$.  Let $(\cH, m, \epsilon, \Delta, \eta)$ be an irreducible special Frobenius algebra in $\bimod(M)$. We have to prove that $M \subset \cH^0$ is a finite index, irreducible inclusion of von Neumann algebras. Let $M_2 = \Hom_{\text{-}M}(\cH)$ be the commutant of the right $M$-action. Then

\centerline{
\begin{xy}
  \xymatrix{\cH^0 \otimes \cH \ar[r] \ar[dr] &
	      \cH \otimes_M \cH \ar[d]^{m} \\
	    & 
	      \cH
  }
\end{xy}
}

yields a map $\phi: \cH^0 \rightarrow M_2$. By considering the restriction  $m:\cH^0 \otimes \Ltwo(M) \rightarrow \cH$, it is clear that $\phi$ is injective. Consider the special Frobenius algebra $(\Ltwo(M_2), m_2, \eta_2, \Delta_2, \epsilon_2)$. We prove that $\phi(\cH^0)$ is a Frobenius subalgebra of $\Ltwo(M_2)$. Indeed, the composition $\cH^0 \otimes \cH^0 \rightarrow \cH \otimes_M \cH \rightarrow \cH$ induces a multiplication on $\cH^0$, since $M$-$M$-bimodular maps send $M$-bounded vectors to $M$-bounded vectors. Since $m$ is associative, we have for $\xi, \xi' \in \cH^0$
\[\phi(m(\xi, \xi')) \cdot \widehat{1_M} = m \circ (m \otimes \id) (\xi, \xi', 1_M) = m \circ (\id \otimes m)(\xi, \xi', 1_M) = \phi(\xi) \cdot \widehat{\phi(\xi')} = \widehat{\phi(\xi) \cdot \phi(\xi')} \, \text{.}\]
So $m$ is the restriction of $m_2$. By taking adjoints, we see that $\Delta$ is the restriction of $\Delta_2$. Next, note that $m \circ (\eta \otimes \id) = \id = m_2 \circ (\eta_2 \otimes \id)$. So $\phi(\eta(x)) \cdot \xi = x\xi$ for all $x \in M \subset \Ltwo(M)$ and all $\xi \in \cH$. So $\eta$ agrees with $\eta_2$. Again, by taking adjoints, $\epsilon$ is the restriction of $\epsilon_2$.

Let $R: \Ltwo(M) \rightarrow \overline{\cH} \otimes_M \cH$ denote the standard conjugate for $\cH$ \cite{longoroberts97}. Frobenius algebras are self-dual via $\Delta \circ \eta$, that is $\Delta \circ \eta: \Ltwo(M) \rightarrow \cH \otimes_M \cH$ is a conjugate for $\cH$. In particular, there is an $M$-$M$-bimodular isomorphism $\psi:\cH \rightarrow \overline{\cH}$ such that

\centerline{
\begin{xy}
\xymatrix{
  \overline{\cH} \otimes_M \cH \ar[r]^-{R^*}			& \Ltwo(M) \\
  \cH \otimes_M \cH \ar[r]^-{m} \ar[u]_{\psi \otimes \id}	& \cH \ar[u]_\epsilon
} 
\end{xy}}

commutes. Denoting by $R_2: \Ltwo(M) \rightarrow \overline{\Ltwo(M_2)} \otimes_M \Ltwo(M_2)$ the standard conjugate for $\Ltwo(M_2)$, we have the commuting diagram

\centerline{
\begin{xy}
\xymatrix{
  \overline{\Ltwo(M_2)} \otimes_M \Ltwo(M_2) \ar[r]^-{R_2^*}					& \Ltwo(M) \, \phantom{\text{.}} \\
  \Ltwo(M_2) \otimes_M \Ltwo(M_2) \ar[r]^-{m_2} \ar[u]_{(\overline{\phantom{x}} \circ *) \otimes \id}	& \Ltwo(M_2) \, \text{.} \ar[u]_{\epsilon_2}
} 
\end{xy}}

Note that, by the definition of standard conjugates, $R$ is the composition of $R_2$ with the orthogonal projection $\overline{\Ltwo(M_2)} \otimes_M \Ltwo(M_2) \rightarrow \overline{\cH} \otimes_M \cH$. So $\psi$ is the restriction of $\overline{\phantom{x}} \circ *$. Now consider the commutative diagram

\centerline{\begin{xy}
\xymatrix{
  \overline{\Ltwo(M_2)} \otimes_M \Ltwo(M_2) \ar[r]^-{\id \otimes \Delta_2}	
    & \overline{\Ltwo(M_2)} \otimes_M \Ltwo(M_2) \otimes_M \Ltwo(M_2) \ar[r]^-{R_2^* \otimes \id}
    & \Ltwo(M) \otimes_M \Ltwo(M_2) \\
  \Ltwo(M_2) \otimes_M \Ltwo(M_2) \ar[r]^-{\id \otimes \Delta_2} \ar[u]_{(\overline{\phantom{x}} \circ *) \otimes \id}
    & \Ltwo(M_2) \otimes_M \Ltwo(M_2) \otimes_M \Ltwo(M_2) \ar[r]^-{m_2 \otimes \id}  
    & \Ltwo(M_2) \otimes_M \Ltwo(M_2) \ar[u]^{\epsilon_2 \otimes \id} \, \text{.}
} 
\end{xy}}

It restricts to the corresponding diagram with $\Ltwo(M_2)$ replaced by $\cH$. Define $\overline{m} = (R^* \otimes \id) \circ (\id \otimes \Delta): \overline{\cH}^0 \otimes \cH \rightarrow \cH$ and $\overline{m_2} = (R_2^* \otimes \id) \circ (\id \otimes \Delta_2)$. Since
\[m_2 = (\epsilon_2 \otimes \id) \circ (m_2 \otimes \id) \circ (\id \otimes \Delta_2)\]
in the Frobenius algebra $\Ltwo(M_2)$, we have that

\centerline{
\begin{xy}
\xymatrix{
  \overline{M_2} \otimes \Ltwo(M_2) \ar[r]^-{\overline{m_2}}		& \Ltwo(M_2) \\
  M_2 \otimes \Ltwo(M_2) \ar[r]^-{m_2} \ar[u]_{(\overline{\phantom{x}} \circ *) \otimes \id}	& \Ltwo(M_2) \ar@{=}[u]
} 
\end{xy}
\qquad \raisebox{-20pt}{\text{and}} \qquad
\begin{xy}
\xymatrix{
  \overline{\cH}^0 \otimes \cH \ar[r]^-{\overline{m}}				& \cH \\
  \cH^0 \otimes \cH \ar[r]^-{m} \ar[u]_{\psi \otimes \id}	& \cH \ar@{=}[u]  \, \text{.}
} 
\end{xy}}

commute and the second diagram is a restriction of the first one. Denote by $\overline{\phi}: \overline{\cH}^0 \rightarrow M_2$ the embedding defined by $\overline{m}$. Then $\overline{\phi}(\overline{x}) = \phi(x)^*$ for $x \in \cH^0$ and $\phi(\cH^0) = \overline{\phi}(\overline{\cH}^0)$. This proves that $\phi(\cH^0)$ is closed under taking adjoints.

We already proved that $\phi(\cH^0)$ is a $*$-subalgebra of $M_2$. Since $\lmo{M}{\cH}$ has finite dimension, $\phi(\cH^0)$is finitely generated over $M$. Hence, it is weakly closed in $M_2$, so it is a von Neumann subalgebra. Finally, $\bim{M}{\Ltwo(\cH^0)}{M} \cong \bim{M}{\cH}{M}$, so $M \subset \cH^0$ is irreducible and has finite index.
\end{proof}

\begin{remark}\label{la:rem:characterisationindices}
\begin{enumerate}
  \item By uniqueness of multiplicative dimension functions on finite tensor $\Cstar$-categories, see \cite{calaqueetingof08}, we have $[M_1 : M]_\mathrm{min} = \mathrm{FPdim}(\bim{M}{\Ltwo(M_1)}{M})$, where $[M_1:M]_\mathrm{min}$ denotes the minimal index \cite{havet90, longo92} and $\mathrm{FPdim}$ denotes the Frobenius Perron dimension \cite[Section 8]{etingofnikshychostrik05}. So if $M \subset M_1$ is extremal (for example irreducible), then we have $[M_1:M] = \mathrm{FPdim}(\bim{M}{\Ltwo(M_1)}{M})$.
  \item By Proposition \ref{la:prop:characterisationinclusions}, irreducible special Frobenius algebras correspond to irreducible inclusions $M \subset M_1$, hence to irreducible subfactors $N \subset M$. In particular, if $\bimod(M)$ is finite, then $\cC(M) = \left \{ \mathrm{FPdim(\cH)} \mid \cH \text{ irreducible special Frobenius algebra in } \bimod(M) \right\}$.
\end{enumerate}
\end{remark}

We can prove Theorem \ref{thm:index-sets} now.

\begin{proof}[Proof of Theorem \ref{thm:index-sets}]
  Denote by $\cC$ the Haagerup fusion category \cite{asaedahaagerup99}. In \cite{grossmansnyder11}, possible principle graphs of irreducible special Frobenius algebras in $\cC$ are classified. Lemma 3.9 in \cite{grossmansnyder11} gives a list of possible principle graphs of non-trivial simple algebras in $\cC$. Note that the list of indices in Theorem \ref{thm:index-sets} is the same as the indices of graphs in \cite[Lemma 3.9]{grossmansnyder11}. We will refer with 1), 2), etc. to the graphs in this lemma. We prove that all the indices of these graphs, are actually realized by some irreducible special Frobenius algebra in $\cC$.

  Since, by \cite[Theorem 3.25]{grossmansnyder11}, there are three pairwise different categories that are Morita equivalent to $\cC$, all the possible principal graphs of minimal simple algebras are are actually realized by some irreducible special Frobenius algebra in $\cC$. So the graphs 1) and 3) are realized. Using the notation of \cite{grossmansnyder11} for irreducible objects in $\cC$, the graphs 4), 6) and 7) are realized by the irreducible special Frobenius algebras $\eta \overline{\eta}$, $\nu \overline{\nu}$ and $\mu \overline{\mu}$. We are left with the graphs 2) and 5). Theorem 3.25 in \cite{grossmansnyder11} gives the fusion rules for module categories over $\cC$. A short calculation shows that the square of the dimension of the second object in the module category associated with the Haagerup subfactor is the index of the graph 2). This proves that the graph 2) is realized. A similar calculation shows that the second object in the second non-trivial module category over $\cC$ gives rise to an irreducible special Frobenius algebra with principal graph given by 5).

  So all indices in \cite[Lemma 3.9]{grossmansnyder11} are actually attained by some irreducible special Frobenius algebra in $\cC$. According to Theorem \ref{thm:finite-tensor-categories} it is possible to find a II$_1$ factor $M$ such that $\bimod(M) \simeq \cC$. We conclude using Remark \ref{la:rem:characterisationindices}.
\end{proof}


\section{Appendix}
\label{sec:appendix}

In this appendix, we prove that the category of finite dimensional unitary corepresentations of a discrete Kac algebra $A$, whose coopposite $A^{\mathrm{coop}}$ acts strictly outerly on the hyperfinite II$_1$ factor $R$, is realized as the the bimodule category of the inclusion $A \ltimes R \subset \wh{A} \ltimes A \ltimes R$. For convenience of the reader, we give a short introduction.

\subsection{Preliminaries on quantum groups}
\label{sec:discrete-quantum-groups}

\subsubsection{Locally compact quantum groups (see \cite{kustermansvaes00})}
\label{sec:locally-compact-quantum-groups}

A locally compact quantum group in the setting of von Neumann algebras is a von Neumann algebra $A$ equipped with a normal $*$-homomorphism $\Delta:A \ra A \vnt A$ and two normal, semi-finite, faithful  weights $\phi$, $\psi$ satisfying
\begin{itemize}
\item \textit{$\Delta$ is comultiplicative:} $(\id \otimes \Delta)\circ \Delta = (\Delta \otimes \id) \circ \Delta$.
\item \textit{$\phi$ is left invariant:} $\phi((\omega \otimes \id)(\Delta(x))) = \phi(x)\omega(1_M)$  for all $\omega \in M_*^+$ and all $x \in M^+$.
\item \textit{$\psi$ is right invariant:} $\psi((\id \otimes \omega)(\Delta(x))) = \psi(x)\omega(1_M)$  for all $\omega \in M_*^+$ and all $x \in M^+$.
\end{itemize}
We call $\Delta$ the comultiplication of $A$ and $\phi$, $\psi$ the left and the right Haar weight of $A$, respectively. If $\phi$ and $\psi$ are tracial, then $A$ is called a Kac algebra. If $A$ is of finite type I, then we say that $A$ is discrete. If $\phi$ and $\psi$ are finite, we say that $A$ is compact.

If $\Gamma$ is a discrete group, then $\linfty(\Gamma)$ is a discrete Kac algebra with comultiplication given by $\Delta(f)(g,h) = f(gh)$ and the left and right Haar weight both induced by the counting measure on $\Gamma$. 

For any locally compact quantum group $(A, \Delta)$ one can construct a dual locally compact quantum group $(\wh{A},\wh{\Delta})$ and a coopposite locally compact quantum group $A^{\mathrm{coop}}$. They both are represented on the same Hilbert space as $A$. Hence, it makes sense to write formulas involving elements of $A$ and $\wh{A}$ at the same time. We have $(A, \Delta) \cong (\wh{\wh{A}}, \wh{\wh{\Delta}})$ and $A$ is compact if and only if $\wh{A}$ is discrete.

\subsubsection{Corepresentations (see \cite{timmermann08})}
\label{sec:corepresentations}

A unitary corepresentation in $H$ of a locally compact quantum group $A$ is a unitary $U \in A \vnt \rB(H)$ such that $(\Delta \otimes \id)(U) = U_{23}U_{13}$. In what follows, we refer to unitary corepresentations simply as corepresentations. If $U \in \rB(H_U) \vnt A$ is a corepresentation of $A$, then we also refer to $U_{21} \in A \vnt \rB(H_U)$ as a corepresentation $A$.
A corepresentation $U$ of $A$ is called finite dimensional if $H_U$ is finite dimensional. The direct sum of two corepresentations $U, V$ of $A$ is denoted by $U \boxplus V \in A \vnt (\rB(H_U) \oplus \rB(H_V)) \cong A \vnt \rB(H_U) \oplus A \vnt \rB(H_V)$. The tensor product of two corepresentations $U$ and $V$ is given by $U \boxtimes V = U_{12}V_{13} \in A \vnt \rB(H_U) \vnt \rB(H_V)$. An intertwiner between two corepresentations $U$ and $V$ is a bounded linear map $T:H_U \ra H_V$ satisfying $(\id \otimes T)U = V(\id \otimes T)$. The space of all intertwiners between $U$ and $V$ is denoted by $\Hom(U,V)$. To every irreducible corepresentation $U \in A \vnt \rB(H_U)$ of $A$, one associates its conjugate corepresentation $(* \otimes \ol{\phantom{x}})(U) \in A \vnt \rB(\ol{H_U})$. Here $\ol{H_U}$ denotes the conjugate Hilbert space of $H_U$.  With this structure, the corepresentations of a locally compact quantum group $A$ become a tensor $\Cstar$-category $\CoRep(A)$. Its maximal compact tensor $\Cstar$-subcategory is the category of finite dimensional corepresentations $\CoRep_{\mathrm{fin}}(A)$. If $A$ is a compact quantum group, every irreducible corepresentation of $A$ is finite dimensional and every corepresentation is a direct sum of (possibly infinitely many) irreducible corepresentations. Coefficients of tensor products of arbitrary length of irreducible corepresentations of $A$ span it densely .

Let $A$ denote a compact quantum group. Then we can describe the evaluation of the Haar states on coefficients of corepresentations. In particular, $(\id \otimes \psi)(U) = (\id \otimes \phi)(U) = \delta_{U,\epsilon}  1$, where $\delta_{U,\epsilon}$ is $1$ if $U$ is the trivial corepresentation and $0$ otherwise.

If $A$ is discrete, its dual is compact. We can write $A$ as 
\[\bigoplus_{U \text{ irr. corep. of } \wh{A}}\rB(H_U) \eqstop\]
For any element $x \in A$ we can characterize $\Delta(x)$ as the unique element in $A \vnt A$ that satisfies $\Delta(x)T = Tx$ for all $T \in \Hom(U_1,U_2 \boxtimes U_3)$ and all irreducible corepresentations $U_1$,$U_2$ and $U_3$ of $\wh{A}$. Moreover, we can write any corepresentation $V \in A \vnt \rB(H_V)$ of $A$ as a direct sum of elements $V_U \in \rB(H_U) \otimes \rB(H_V)$ where $U$ runs through the irreducible corepresentations of $\wh{A}$. If $\epsilon$ denotes the trivial corepresentation, then $V_\epsilon = 1 \otimes 1$. Moreover, $V_{\ol{U}} = (\ol{\phantom{x}} \otimes *)(V)$. 

\subsubsection{Actions of quantum groups (see \cite{vaes05-strictly-outer-actions})}
\label{sec:actions-of-quantum-groups}

An action of a locally compact quantum group $A$ on a von Neumann algebra $N$ is a normal $*$-homomorphism $\alpha: N \ra A \vnt N$ such that $(\Delta \otimes \id) \circ \alpha = (\id \otimes \alpha) \circ \alpha$. The crossed product von Neumann algebra of $N$ by $\alpha$ is then the von Neumann algebra $A \ltimes N$  generated by $\wh{A} \otimes 1$ and $\alpha(N)$. We identify $N$ and $\wh{A}$ with subalgebras of $A \ltimes N$. There is a natural action $\wh{\alpha}$ of $\wh{A}$ on $A \ltimes N$, which is uniquely defined by $\wh{\alpha}(a) = \wh{\Delta}(a)$ for $a \in \wh{A}$ and $\wh{\alpha}(x) = 1 \otimes x$ for $x \in N$. This action is called the dual action of $\alpha$.

If an action $\alpha: N \ra A \vnt N$ of a locally compact quantum group on a factor satisfies $N' \cap A \ltimes N = \CC 1$, then $\alpha$ is called strictly outer.

Let $A$ be a discrete quantum group that acts via $\alpha$ on a von Neumann algebra $N$. We denote $A \ltimes N$ by $M$ and as before we identify $\wh{A}$ and $N$ with subalgebras of $M$. If $A$ is a Kac algebra, $N$ is finite and $\alpha$ preserves a trace $\tau_N$ on $N$, then $M$ is also finite. A faithful normal trace on $M$ is given by
\[(\tau \otimes \id)(U(1 \otimes x)) = \delta_{U,\epsilon} \tau_N(x) \eqcomma\]
for all $x \in N$ and for all irreducible corepresentations $U \in \wh{A} \otimes \rB(H_U)$ of $\wh{A}$. For $x \in N$ and $U \in \rB(H_U) \otimes \wh{A}$ an irreducible corepresentation of $\wh{A}$, we write $\alpha_U(x)$ for the projection of $\alpha(x)$ onto the direct summand $\rB(H_U) \otimes N$ of $A \vnt N$. For $x \in N$ we have $U(1 \otimes x)U^* = \alpha_U(x)$.

\subsection{Corepresentation categories of Kac algebras}
\label{sec:representation-categories-for-subfactors}

\begin{theorem}
\label{thm:representation-categories-for-subfactors}
  Let $N$ be a II$_1$ factor, $A$ a discrete quantum group and $\alpha: A \ra A \vnt N$ a strictly outer action. Denote by $M = A \ltimes N$ the crossed product of $N$ by $\alpha$ and write $\wh{A} \ltimes M$ for the crossed product by the dual action. Then $\bimod(M \subset \wh{A} \ltimes M) \simeq \CoRepfin(A^{\mathrm{coop}})$ as tensor $\Cstar$-categories, where $\CoRepfin(A^{\mathrm{coop}})$ denotes the category of finite dimensional corepresentations of $A^{\mathrm{coop}}$.
\end{theorem}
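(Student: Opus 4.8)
The plan is to construct an explicit equivalence of tensor $\Cstar$-categories by identifying both sides with a concrete endomorphism category attached to the crossed product $M = A \ltimes N$. Recall that $N \subset M$ is a depth $2$, quasi-regular inclusion of $\mathrm{II}_1$ factors, with Jones tower $N \subset M \subset \wh{A} \ltimes M \subset \cdots$, and that $\bimod(M \subset \wh{A} \ltimes M)$ is by definition the tensor $\Cstar$-subcategory of $\bimod(M)$ generated by the finite-index $M$-$M$-bimodules appearing in $\rL^2(\wh{A} \ltimes M)$. The first step is to describe these bimodules explicitly: since $\wh{A} \ltimes M = \wh{A} \ltimes A \ltimes N$ is generated by $A$, $\wh{A}$ and $N$, the $M$-$M$-bimodule $\rL^2(\wh{A} \ltimes M)$ decomposes, using the direct integral/direct sum decomposition of the discrete quantum group $A = \bigoplus_{U} \rB(H_U)$ over the irreducible corepresentations $U$ of $\wh{A}$, into a direct sum of sub-bimodules $\cH_U$ indexed by those $U$. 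Concretely, for each such $U$ one forms the $M$-$M$-bimodule $\cH_U = \rL^2(M) \otimes H_U$ with right action twisted by the $\ast$-homomorphism $x \mapsto \alpha_U(x) \in \rB(H_U) \otimes M$; equivalently, $\cH_U \cong \cH(\psi_U)$ for $\psi_U : M \to \rB(H_U) \otimes M$ a finite-index inclusion. This is where the relation between corepresentations of $A^{\mathrm{coop}}$ (rather than $A$ itself) will enter, via the formula $U(1 \otimes x) U^* = \alpha_U(x)$ and the coopposite twisting of the comultiplication.

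The second step is to define the functor $F : \CoRepfin(A^{\mathrm{coop}}) \to \bimod(M \subset \wh{A} \ltimes M)$ on objects by $V \mapsto \cH(\psi_V)$, where for a finite-dimensional corepresentation $V \in A \vnt \rB(H_V)$ one sets $\psi_V(x) = V(1 \otimes x)V^* \in \rB(H_V) \otimes M$ restricted appropriately, and on morphisms by sending an intertwiner $T \in \Hom(V,W)$ to the induced bimodular map. One checks $F$ is a $\Cstar$-functor (morphisms and adjoints are preserved essentially by definition) and that it is monoidal: the identity $\cH(\rho) \otimes_M \cH(\psi) \cong \cH((\id \otimes \psi)\rho)$ recalled earlier matches the tensor product $V \boxtimes W = V_{12}W_{13}$ of corepresentations of $A^{\mathrm{coop}}$, once one is careful that the order of composition in $(\id \otimes \psi)\rho$ corresponds to the coopposite multiplication, which is exactly why $A^{\mathrm{coop}}$ and not $A$ appears.

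The third step is faithfulness, fullness, and essential surjectivity. Faithfulness is immediate from injectivity of $V \mapsto \psi_V$ up to the bookkeeping of $H_V$. For fullness one must show that every $M$-$M$-bimodular map $\cH(\psi_V) \to \cH(\psi_W)$ comes from an intertwiner of corepresentations: such a map lies in $\rB(H_V, H_W) \otimes M$ (right $M$-modularity) and, by left $M$-modularity together with the fact that $M$ is generated by $N$ and $\wh{A}$ and the action $\alpha$ is \emph{strictly outer} (so $N' \cap M = \CC 1$ and, inductively, $N' \cap (\wh{A} \ltimes M)$ is controlled), it must actually lie in $\rB(H_V,H_W) \otimes \CC 1$ and intertwine $V$ with $W$; here I expect to use the characterization of depth $2$ via Corollary \ref{corol.depth2} and the faithfulness statement of Theorem \ref{sec:depth-2:thm:characterization} to pin down the relative commutants. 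Essential surjectivity follows because, as in the first step, $\rL^2(\wh{A} \ltimes M)$ decomposes precisely as $\bigoplus_U \cH_U = \bigoplus_U F(U)$ over the irreducible corepresentations $U$, so the tensor $\Cstar$-subcategory generated by these is exactly the essential range of $F$, and it is already closed under tensor products and conjugates on the corepresentation side since $\CoRepfin(A^{\mathrm{coop}})$ is.

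The main obstacle I anticipate is the \emph{fullness} step, specifically controlling the relative commutants in the Jones tower of the crossed product and translating strict outerness of $\alpha$ into the statement that bimodular maps between the $\cH(\psi_V)$ are scalar-valued on the $\rB(H_V)$-legs. This is precisely the point where one needs Vaes' results on crossed products by discrete quantum groups (the cited \cite{vaes01}, \cite{vaes05-strictly-outer-actions}) rather than pure bimodule bookkeeping, and getting the coopposite-versus-dual twist exactly right in the monoidal structure requires care. The remaining verifications — that $\psi_V$ has finite index, that the categorical structure maps match, that conjugates correspond — are routine given the explicit formulas for the trace on $M$ and the description of conjugate corepresentations via $(\ast \otimes \overline{\phantom{x}})$.
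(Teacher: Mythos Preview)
Your broad strategy---build a functor $F$ from corepresentations to bimodules via explicit $*$-homomorphisms $\psi_V$, then verify it is a fully faithful tensor $\Cstar$-functor and essentially surjective---is exactly the paper's. But two concrete steps do not work as written.

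First, your formula $\psi_V(x) = V(1\otimes x)V^*$ does not define a map $M \to \rB(H_V)\otimes M$: the corepresentation $V$ lives in $A \otimes \rB(H_V)$, and $A$ is \emph{not} a subalgebra of $M = A \ltimes N$ (it is $\wh{A}$ that sits inside $M$). Likewise, $\alpha_U$ is only a map $N \to \rB(H_U) \otimes N$, not a map on $M$, so it cannot twist a right $M$-action as you propose for $\cH_U$. The paper instead defines $\psi_V$ on generators: $\psi_V(x) = 1 \otimes x$ for $x \in N$, and on the coefficients of $\wh{A}$ via the rule $(\id \otimes \psi_V)(U) = U_{13}V_{12}$ for each irreducible corepresentation $U$ of $\wh{A}$. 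One then checks directly that this extends to a trace-preserving $*$-homomorphism on $M$, and it is precisely in the multiplicativity check that the coopposite identity $(\Delta \otimes \id)(V) = V_{13}V_{23}$ is used.

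Second, your essential surjectivity argument conflates irreducible corepresentations of the compact dual $\wh{A}$ (which index the matrix blocks of $A$) with finite-dimensional corepresentations of the discrete $A^{\mathrm{coop}}$ (the inputs of $F$); writing $\bigoplus_U F(U)$ with $U$ running over the former is not meaningful. The paper does not decompose $\rL^2(\wh{A}\ltimes M)$ at all. It takes an arbitrary $\cH$ in the category, writes it as $\cH(\psi)$ with $\psi(x) = 1 \otimes x$ for $x \in N$ (using $N' \cap M = \CC 1$ from strict outerness to reduce the projection to $1$), and then \emph{reconstructs} a corepresentation $V$ of $A^{\mathrm{coop}}$ from $\psi$ by setting $V_U \otimes 1 = (\id \otimes \psi)(U)U_{13}^*$ and verifying $(\Delta \otimes \id)(V) = V_{13}V_{23}$. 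Your instinct that strict outerness drives fullness is correct---it gives $T \in \rB(H_V,H_W) \otimes \CC 1$ immediately---but neither the depth-$2$ characterizations nor external crossed-product results are needed; the computations are self-contained.
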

\begin{proof}
  We first construct a fully faithful tensor $\Cstar$-functor $F:\CoRepfin(A^{\mathrm{coop}}) \ra \bimod(M \subset \wh{A} \ltimes M)$. Then, we show that it is essentially surjective.

\textit{Step 1.}  Let $V \in A \otimes \Cmat{n}$ be a finite dimensional corepresentation of $A^{\mathrm{coop}}$, that is $(\Delta \otimes \id)(V) = V_{13}V_{23}$. We define a $*$-homomorphism $\psi:M \ra \Cmat{n} \otimes M$ such that
\[\psi(x) = 1 \otimes x \, \text{ for all }x \in N \quad \text{and} \quad (\id \otimes \psi)(U) = U_{13}V_{12} \eqcomma\]
where $U \in \rB(H_U) \otimes \wh{A} \subset A \vnt \wh{A}$ is an irreducible corepresentation of $\wh{A}$.

\textit{Proof of Step 1.} We first show that $\psi$ defines a $*$-homomorphism. This is obvious on $N$. In order to prove that $\psi$ is multiplicative on $\wh{A} \subset M$, we have to check for all irreducible corepresentations $U_1,U_2,U_3$ of $\wh{A}$ and for every intertwiner $T \in \Hom(U_1, U_2 \boxtimes U_3)$ the identity
\[(\id \otimes \psi)(U_2)_{134} (\id \otimes \psi)(U_3)_{234}(T \otimes \id)
  = (T \otimes \id)(\id \otimes \psi)(U_1)\]
holds. We have
\begin{align*}
  (T \otimes \id)(\id \otimes \psi)(U_1)
  & = 
  (T \otimes \id)U_{1,13}V_{U_1,12} \\
  & =
  U_{2,14}U_{3,24}(\Delta \otimes \id)(V_{U_1})_{123}(T \otimes \id) \\
  & =
  U_{2,14}U_{3,24}V_{U_2,13}V_{U_3,23}(T \otimes \id) \\
  & =
  U_{2,14}V_{U_2,13}U_{3,24}V_{U_3,23}(T \otimes \id) \\
  & =
  (\id \otimes \psi)(U_2)_{134} (\id \otimes \psi)(U_3)_{234}(T \otimes \id)
  \eqstop
\end{align*}
We prove that $\psi$ is a homomorphism on $\mathrm{alg}(\wh{A},N) = *\text{-}\mathrm{alg}(\wh{A},N)$. Using the fact that $U (1 \otimes x) = \alpha_U(x) U$ for all $x \in N$ and all irreducible corepresentations $U$ of $\wh{A}$, it suffices to note that
\begin{align*}
  (\id \otimes \psi)(U)(1 \otimes 1 \otimes x)
  & =
  U_{13}V_{12} (1 \otimes 1 \otimes x) \\
  & =
  U_{13}(1 \otimes 1 \otimes 1 \otimes x)V_{12} \\
  & =
  \alpha_U(x)_{13}U_{13}V_{12} \\
  & =
  \alpha_U(x)_{13}(\id \otimes \psi)(U)
  \eqstop
\end{align*}
Let us show that $\psi$ is $*$-preserving. We have
\begin{align*}
  (\id \otimes \psi)((\ol{\phantom{x}} \otimes *)(U)) =
  & =
  (\id \otimes \psi)(\ol{U}) \\
  & =
  \ol{U}_{13}V_{12} \\
  & = 
  \ol{U}_{13}V_{\ol{U},12} \\
  & =
  (\ol{\phantom{x}} \otimes *)(U)_{13}(\ol{\phantom{x}} \otimes *)(V_U)_{12} \\
  & =
  (\ol{\phantom{x}} \otimes * \otimes *)(U_{13}V_{12})
  \eqcomma
\end{align*}
This shows that $\psi$ is a $*$-homomorphism on $*\text{-}\mathrm{alg}(\wh{A},N)$.

Let us show that $\psi$ is trace preserving on $*\text{-}\mathrm{alg}(\wh{A}, N)$. Denote by $\tau$ the trace on $M$. For an irreducible corepresentation $U$ of $\wh{A}$ and $x \in N$ we have
\[(\id \otimes \tau)(U (1 \otimes x)) = \delta_{U,\epsilon} \tau(x) 1_A \eqcomma\]
by the definition of $\tau$. On the other hand we have
\begin{align*}
  (\id \otimes \tr \otimes \tau)(\id \otimes \psi)(U(1 \otimes x))
  & = 
  (\id \otimes \tr \otimes \tau)(U_{13}V_{12}(1 \otimes 1 \otimes x)) \\
  & =
  \delta_{U,\epsilon}\tau(x)(\id \otimes \tr)(V_{\epsilon, 12}) \\
  & =
  \delta_{U,\epsilon}\tau(x) 1_A
  \eqstop
\end{align*}
So $\psi$ is trace preserving and hence it extends to a $*$-homomorphism $\psi:M \ra \Cmat{n} \otimes M$. 

\textit{Step 2.}  Define a functor $F: \CoRepfin(A^{\mathrm{coop}}) \ra \bimod(Q \subset Q_1)$ such that if $V$ is a  finite dimensional corepresentation of $A^{\mathrm{coop}}$ and $\psi$ the map associated with it in Step 1, we have $F(V) = \cH(\psi)$. If $T \in \Hom(V_1,V_2)$ is an intertwiner, we set $F(T) = T \otimes \id: H_{V_1} \otimes \Ltwo(M) \ra H_{V_2} \otimes \Ltwo(M)$. Then $F$ is fully faithful tensor \Cstar-functor.

\textit{Proof of Step 2.} It is obvious that $F$ is faithful. In order to show that $F$ is full, let $V_1 \in A \otimes \Cmat{m}$,$V_2 \in A \otimes \Cmat{n}$ be finite dimensional corepresentations of $A^{\mathrm{coop}}$. Denote by $\psi_1$, $\psi_2$ the maps associated with $V_1$ and $V_2$, respectively. Let $T: \CC^m \otimes \Ltwo(M) \ra \CC^n \otimes \Ltwo(M)$ be an intertwiner from $\cH(F(V_1))$ to $\cH(F(V_2))$. Then $T \in \rB(\CC^m, \CC^n) \otimes M$ satisfies
\[ T (1 \otimes x) =  T \psi_1(x) = \psi_2(x) T = (1 \otimes x) T \quad \text{for all }x \in N \eqstop\]
Hence, $T \in \rB(\CC^m, \CC^n) \otimes 1$. So, for any irreducible corepresentation $U$ of $\wh{A}$, we have
\begin{align*}
  V_{2,12}T_{23}
  & =
  U_{13}^*U_{13}V_{2,12}T_{23} \\
  & =
  U_{13}^*\psi_2(U)T_{23} \\
  & =
  U_{13}^*T_{23}\psi_1(U) \\
  & =
  T_{23}U_{13}^*U_{13}V_{1,12} \\
  & =
  T_{23}V_{1,12}
  \eqstop
\end{align*}
So $T$ comes from an intertwiner from $V_1$ to $V_2$. This shows that $F$ is full.

For an intertwiner $T \in \Hom(V_1, V_2)$ we have $F(T^*) = F(T)^*$, so $F$ is a $\Cstar$-functor.

If $V_1$,$V_2$ are finite dimensional corepresentations of $A^{\mathrm{coop}}$, $\psi_1, \psi_2$ and $\psi$ denote the maps associated with $V_1, V_2$ and $V_{1,12}V_{2,13} = V_1 \boxtimes V_2$ respectively, then $(\id \otimes \psi)(U) = U_{14}V_{1,12}V_{2,13} = (\id \otimes \psi_2) \circ \psi_1(U)$, for every irreducible corepresentation $U \in \rB(H_U) \otimes \wh{A}$ of $\wh{A}$. So $\psi = (\id \otimes \psi_2) \circ \psi_1$. We obtain $F(V_1) \otimes_M F(V_2) \cong F(V_1 \boxtimes V_2)$ and this unitary isomorphism is natural in $V_1$ and $V_2$. Hence $F$ is a tensor $\Cstar$-functor.  

\textit{Step 3.}  $F$ is essentially surjective.

\textit{Proof of Step 3.} Let $\cH$ be a finite index bimodule in $\bimod(M \subset \wh{A} \ltimes M)$. Write $\cH \cong \cH(\psi)$ for some $\psi: M \ra p(\Cmat{n} \otimes M)p$ satisfying $p \in (1 \otimes N)' \cap (\Cmat{n} \otimes M)$ and $\psi(x) = p(1 \otimes x)$ for all $x \in N$. Since $N \subset M$ is irreducible, we have $p \in \Cmat{n} \otimes 1$, so we may assume that $p = 1$. For an irreducible corepresentation $U$ of $\wh{A}$, by the same calculation as in Step $1$, we obtain
\[(\id \otimes \psi)(U)U_{13}^*\alpha_{U}(x) = \alpha_{U}(x)(\id \otimes \psi)(U)U_{13}^* \eqcomma\]
for all $x \in N$. Since $N$ is linearly generated by the coefficients of $\alpha_U(N)$, it follows that
\[(\id \otimes \psi)(U)U_{13}^* = V_{U,12}\]
for some element in $V_U \in \rB(H_U) \otimes \Cmat{n} \subset A \otimes \Cmat{n}$. Let
\[V = \bigboxplus_{U \text{ irr. corep. of } \wh{A}} V_U \in A \otimes \Cmat{n} \eqstop\]
We show that $V$ is a corepresentation of $A^{\mathrm{coop}}$, i.e. that $(\Delta \otimes \id)(V) = V_{13}V_{23}$. It suffices to show for any irreducible corepresentations $U_1, U_2, U_3$ and any intertwiner $T \in \Hom(U_1, U_2 \boxtimes U_3)$ that we have
\[V_{U_2,13}V_{U_3,23} (T \otimes \id) = (T \otimes \id)(V_{U_1}) \eqstop\]
Indeed, we have
\begin{align*}
  (T \otimes \id)(V_{U_1} \otimes 1)
  & =
  (T \otimes \id)(\id \otimes \psi)(U_1)U_{1,13}^* \\
  & =
  (\id \otimes \psi)(U_2)_{134}(\id \otimes \psi)(U_3)_{234}(U_{2,14}U_{3,24})^*(T \otimes \id) \\
  & =
  (\id \otimes \psi)(U_2)_{134}V_{3,23}U_{2,14}^*(T \otimes \id) \\
  & =
  (\id \otimes \psi)(U_2)_{134}U_{2,14}^*V_{3,23}(T \otimes \id) \\
  & =
  V_{2,13}V_{3,23}(T \otimes \id)
  \eqstop
\end{align*}
This shows that $V$ is a corepresentation of $A$ and $\cH(\psi) = F(V)$.
\end{proof}


{\small\parbox[t]{250pt}{S\'{e}bastien Falgui\`{e}res \\ Laboratoire de Math\'{e}matiques Nicolas Oresme\\ Universit\'{e} de Caen-Basse-Normandie \\ 14032 Caen cedex \\ France\\
{\footnotesize sebastien.falguieres@unicaen.fr}}
\hspace{15pt}
\parbox[t]{150pt}{Sven Raum\\ Department of Mathematics\\
   K.U.Leuven, Celestijnenlaan 200B\\ B--3001 Leuven \\ Belgium
   \\ {\footnotesize sven.raum@wis.kuleuven.be}}}

\bibliographystyle{mybibtexstyle}
\bibliography{operatoralgebras}

\end{document}